\newcommand{\blue}[1]{#1}
\theoremstyle{plain}
\newtheorem{theorem}{Theorem}
\newtheorem{lemma}[theorem]{Lemma}
\newtheorem{corollary}[theorem]{Corollary}
\newtheorem{prop}[theorem]{Proposition}
\newtheorem{remark}[theorem]{Remark}
\theoremstyle{remark}
\newcommand{\R}{\mathbb{R}}
\newcommand{\F}{\mathbb{F}}
\newcommand{\EE}{\mathbb{E}}
\newcommand{\E}{\mathbb{E}}
\newcommand{\PP}{\mathbb{P}}
\renewcommand{\>}{\rangle}
\newcommand{\III}[1]{{\left\vert\kern-0.25ex\left\vert\kern-0.25ex\left\vert #1 
    \right\vert\kern-0.25ex\right\vert\kern-0.25ex\right\vert}}
\newcommand{\simiid}{\stackrel{\text{i.i.d.}}{\sim}}
\newcommand{\simind}{\stackrel{\text{ind.}}{\sim}}
\newcommand{\ca}{\mathcal{A}}  \newcommand{\cc}{\mathcal{C}} \newcommand{\cd}{\mathcal{D}}    \newcommand{\ch}{\mathcal{H}}    \newcommand{\cl}{\mathcal{L}} \newcommand{\cm}{\mathcal{M}} \newcommand{\cn}{\mathcal{N}}  \newcommand{\cp}{\mathcal{P}}         \newcommand{\cz}{\mathcal{Z}}
\newcommand{\eps}{\varepsilon}
\newcommand{\conv}{\textrm{conv}} 
\providecommand*{\diff}%
	{\@ifnextchar^{\DIfF}{\DIfF^{}}}
\def\DIfF^#1{%
	\mathop{\mathrm{\mathstrut d}}%
		\nolimits^{#1}\gobblespace}
\def\gobblespace{%
	\futurelet\diffarg\opspace}
\def\opspace{%
	\let\DiffSpace\!%
	\ifx\diffarg(%
		\let\DiffSpace\relax
	\else
		\ifx\diffarg[%
			\let\DiffSpace\relax
	\else
		\ifx\diffarg\{%
			\let\DiffSpace\relax
		\fi\fi\fi\DiffSpace}
\renewcommand{\P}{{\mathbb P}}
\newcommand{\M}{{\mathcal{M}}}
\newcommand{\Ps}{{\mathcal{P}}}
\newcounter{rcnt}[section]
\def\argmin{\mathop{\rm argmin}}
\def\argmax{\mathop{\rm argmax}}
\def\cp{\mathcal{P}}
\def\eps{\varepsilon}
\newcommand{\aprior}{G}
\newcommand{\trueprior}{\aprior^*}
\newcommand{\npmle}{{\widehat{\aprior}_n}}
\newcommand{\avec}{\theta}
\newcommand{\truevec}{\avec^*} 
\newcommand{\estvec}{\hat\avec} 
\newcommand{\orvec}{\hat\avec^*} 
\newcommand{\p}{d} 
\newcommand{\khat}{\hat{k}} 
\begin{document}

\title{Multivariate, Heteroscedastic Empirical Bayes\\via Nonparametric Maximum Likelihood}

\author[1]{Jake A. Soloff\thanks{Work done while at UC Berkeley.}}
\author[2]{Adityanand Guntuboyina}
\author[3]{Bodhisattva Sen}
\affil[1]{Department of Statistics, University of Chicago}
\affil[2]{Department of Statistics, University of California, Berkeley}
\affil[3]{Department of Statistics, Columbia University}

\date{\today}

\maketitle

\begin{abstract}
Multivariate, heteroscedastic errors complicate statistical inference in many large-scale denoising problems. Empirical Bayes is attractive in such settings, but standard parametric approaches rest on assumptions about the form of the prior distribution which can be hard to justify and which introduce unnecessary tuning parameters. We extend the nonparametric maximum likelihood estimator (NPMLE) for Gaussian location mixture densities to allow for multivariate, heteroscedastic errors. NPMLEs estimate an arbitrary prior by solving an infinite-dimensional, convex optimization problem; we show that this convex optimization problem can be tractably approximated by a finite-dimensional version. 

The empirical Bayes posterior means based on an NPMLE have low regret, meaning they closely target the oracle posterior means one would compute with the true prior in hand. We prove an oracle inequality implying that the empirical Bayes estimator performs at nearly the optimal level (up to logarithmic factors) for denoising without prior knowledge. We provide finite-sample bounds on the average Hellinger accuracy of an NPMLE for estimating the marginal densities of the observations. We also demonstrate the adaptive and nearly-optimal properties of NPMLEs for deconvolution. We apply our method to two denoising problems in astronomy, constructing a fully data-driven color-magnitude diagram of 1.4 million stars in the Milky Way and investigating the distribution of 19 chemical abundance ratios for 27 thousand stars in the red clump. We also apply our method to hierarchical linear models, illustrating the advantages of nonparametric shrinkage of regression coefficients on an education data set and on a microarray data set.
\vspace{.25em}

\noindent{\bf MSC 2010 subject classifications:} 62C12, 62G05, 62P35.

\noindent{\bf Key words:} Adaptive estimation, empirical Bayes, Gaussian mixture model, $g$-modeling, heteroscedasticity, Kiefer-Wolfowitz estimator.
\end{abstract}

\section{Introduction}\label{sec-intro}

Consider a $\p$-dimensional ($\p \ge 1$), heteroscedastic normal observation model
\begin{align}\label{eq-obs-model}
X_i\mid \truevec_i &\simind \cn(\truevec_i, \Sigma_i), \qquad\mbox{with } \truevec_i \simiid \trueprior, \qquad \;\mbox{for } i\in \{1,\dots, n\},
\end{align}
where $(\Sigma_i)_{i=1}^n$ is a known sequence of $\p\times \p$ positive-definite covariance matrices, and the underlying mean vectors $(\truevec_i)_{i=1}^n$ are additionally assumed to be drawn from a common prior $\trueprior$, where $\trueprior$ belongs to the collection $\Ps(\R^\p)$ of all probability measures on $\R^\p$. In settings where~$\trueprior$ is known, model~\eqref{eq-obs-model} fully specifies a Bayesian model; this paper studies the common empirical Bayes setting where~$\trueprior$ must be estimated. The main goal of the paper is to nonparametrically estimate $\trueprior$ and the sequence $(\truevec_i)_{i=1}^n$ from the observed data $(X_i, \Sigma_i)_{i=1}^n$.

By allowing for arbitrary prior distributions $\trueprior\in \Ps(\R^\p)$, model~\eqref{eq-obs-model} captures a range of important structural assumptions on the underlying sequence~$(\truevec_i)_{i=1}^n$. For instance, the clustering problem---where the terms of $(\truevec_i)_{i=1}^n$ take on at most $k^*$ distinct values---corresponds to discrete~$\trueprior$; sparse modeling---where most of the $(\truevec_i)_{i=1}^n$ are zero---corresponds to~$\trueprior(\{0\})\approx 1$. The model also accommodates more complex manifold-like structures (see e.g. Figure~\ref{fig-cmd}) as well as substantially more heterogeneous sequences (e.g. heavy tailed~$\trueprior$). Rather than assume a particular form for the latent structure represented by~$\trueprior$, we apply the nonparametric maximum likelihood estimator (NPMLE), which searches over all possible forms of latent structure that a set of precise measurements could have, identifying the underlying structure that best explains the real, noisy observations. Through detailed theoretical analysis, simulations and real data analyses, we demonstrate that the NPMLE exhibits a remarkable ability to adapt to latent structure where it is present and back away when no structure is available. 

Empirical Bayes methods for the normal sequence model~\eqref{eq-obs-model} have been studied extensively in the univariate, homoscedastic setting where $\p=1$ and $\Sigma_i\equiv\sigma^2$ (see, e.g., \citet{james1961, efron1972empirical, efron1972limiting, efron1973combining, efron1973stein, morris1983parametric, jiang2009general, efron2012large, efron2014two} as well as \citet{johnstone2019gaussian} for a manuscript on estimation in Gaussian sequence models). Numerous methods extend empirical Bayes to the univariate, heteroscedastic case \citep[see][and references therein]{jiang2011best, xie2012sure, tan2016steinized, weinstein2018group, jiang2020general, banerjee2023nonparametric,chen2023empirical,ignatiadis2023empirical}. Relatively little attention has been given to the general case of the present paper. 

\subsection{Applications to denoising problems}\label{sec-denoising-app}
 
In the multivariate, heteroscedastic setting, model~\eqref{eq-obs-model} naturally arises in the analysis of astronomy data, where often a calibrated measurement error distribution comes attached to each observation, and typically these errors are heteroscedastic \citep{kelly2012measurement}; also see e.g.~\citet{AB96},~\citet{HDB10},~\citet{anderson2018improving}. The first part of model~\eqref{eq-obs-model} indicates that the target sequence~$(\truevec_i)_{i=1}^n$ has, due to measurement error, been corrupted by additive, zero-mean Gaussian noise, i.e.
\begin{equation*}
X_i = \truevec_i + \epsilon_i, \mbox{ where }\epsilon_i \simind \cn(0, \Sigma_i), \qquad \mbox{for } i = 1,\ldots,n.
\end{equation*}
Interestingly, the $\Sigma_i$'s above, which typically differ across $i$, are known in many applications where the measurement process is well-characterized. In many situations it is assumed that $\truevec_i$ is itself random and independent of $\epsilon_i$ for all $i$. Although each observation has a different error distribution, the~$n$ observations are tied together by the assumption that the $\truevec_i$'s are i.i.d.~from some distribution~$\trueprior$, yielding model~\eqref{eq-obs-model}. 

Our motivating example for model~\eqref{eq-obs-model} involves the construction of a precise stellar color-magnitude diagram. A color-magnitude diagram (CMD) is a scatter plot of stars, displaying their absolute magnitude (a proxy for luminosity) versus color (a proxy for surface temperature) to provide a cross-sectional view of stellar evolution. The continued expansion of available stellar measurements has made purely statistical models such as model~\eqref{eq-obs-model} increasingly attractive for denoising. One approach to estimating~$\trueprior$---which is known as {\sl Extreme Deconvolution} (XD) \citep{bovy2011extreme} and is especially popular in astronomy applications---assumes
\begin{align}\label{eq-gmm-prior}
\trueprior = \sum_{j=1}^K \alpha_j^*\cn(\mu_j^*, V_j^*)
\end{align}
and estimates the parameters $(\alpha_j^*, \mu_j^*, V_j^*)_{j=1}^K$ via the {\sl Expectation-Maximization} (EM) algorithm with split-and-merge operations designed to avoid local optima. For instance, \citet{anderson2018improving} applied XD to build a low-noise CMD with $n\approx1.4$ million de-reddened stars from the Gaia TGAS catalogue. The XD assumption~\eqref{eq-gmm-prior} that the prior~$\trueprior$ is itself a mixture of $K$-Gaussians has a number of drawbacks. Although the class of Gaussian location-scale mixtures is flexible for large $K$, the choice of $K$ requires tuning; violations of assumption~\eqref{eq-gmm-prior} for fixed~$K$ induce bias in the estimation. To our knowledge, no theoretical results for the statistical properties of XD are available, making it difficult to quantify the misspecification error. Moreover, the class of all probability distributions of the form~\eqref{eq-gmm-prior} is non-convex for finite~$K$, so even split-and-merge techniques employed within EM do not guarantee convergence to the global maximizer of the likelihood. 

\begin{figure}[t!]
\centering
\includegraphics[width=.9\textwidth]{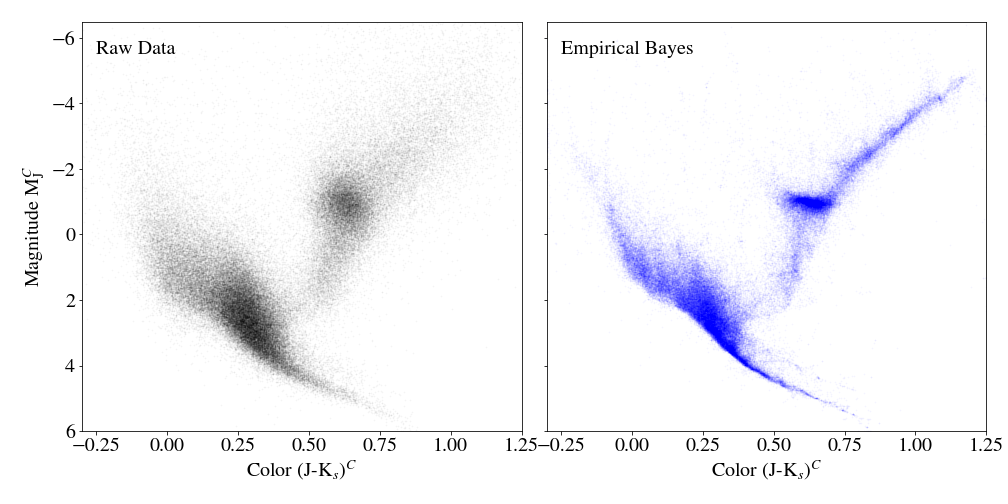} 
\caption{A noisy color-magnitude diagram (CMD) corresponding to the observations $X_i$ in model~\eqref{eq-obs-model}, with corresponding fully-nonparametric denoised estimates~$\estvec_i$ in the right panel. To avoid overplotting, we display a subsample of~$n=10^5$ stars.}\label{fig-cmd}
\end{figure}

To avoid these difficulties, we extend the \cite{kiefer1956consistency} nonparametric maximum likelihood estimator (NPMLE) to incorporate multivariate and heteroscedastic errors. An NPMLE is any~$\npmle\in \Ps(\R^\p)$ which maximizes the marginal likelihood of the observations~$(X_i)_{i=1}^n$. Marginally, the observations are independent, and the~$i^\mathrm{th}$ observation~$X_i$ is distributed according to a Gaussian location mixture with density
\begin{align}\label{eq-mixture-density}
f_{\trueprior, \Sigma_i}(x) \coloneqq \int \varphi_{\Sigma_i}(x-\avec) \diff \trueprior(\avec), \qquad \mbox{for } x \in \R^\p,
\end{align}
where $\varphi_{\Sigma_i}(x) := \frac{1}{\sqrt{\det(2\pi\Sigma_i)}}\exp\left(-\frac{1}{2}x^\mathsf{T}\Sigma_i^{-1}x\right)$ denotes the density of $\epsilon_i \sim\cn(0, \Sigma_i)$. Hence an NPMLE is any maximizer 
\begin{align}\label{eq-NPMLE}
\npmle&\in \argmax_{\aprior\in \Ps(\R^\p)} \ell_n(\aprior), \qquad\mbox{with}\qquad\ell_n(\aprior) \coloneqq \frac{1}{n}\sum_{i=1}^n\log f_{\aprior, \Sigma_i}(X_i).
\end{align}
In contrast to the parametric model used in XD, the nonparametric domain~$\Ps(\R^\p)$ is convex, so~$\npmle$ solves a convex optimization problem, and tools from convex optimization may be leveraged to find principled approximations to~$\npmle$ \citep{koenker2014convex, kim2020fast}. 

Given an estimate~$\npmle$ of the prior~$\trueprior$, empirical Bayes imitates the optimal Bayes analysis, known as the {\sl oracle} \citep{efron2019bayes}. If $\trueprior$ were known, optimal denoising of $\truevec_i$ would be achieved through the posterior distribution $\truevec_i\mid X_i$. It is well known, for instance, that the oracle posterior mean
\begin{align}\label{eq-oracle}\orvec_i \coloneqq \E_{\trueprior}\left[\truevec_i\mid X_i\right], \text{ where } \truevec_i\sim \trueprior \text{ and } X_i\mid\truevec_i\sim \cn(\truevec_i, \Sigma_i)\end{align}
minimizes the squared error Bayes risk $\E_{\trueprior}\|\mathfrak{d}_i(X_i) - \truevec_i\|_2^2$ over {\sl all} measurable $\mathfrak{d}_i : \R^\p\to\R^\p$. The NPMLE~\eqref{eq-NPMLE} yields a fully data-driven, empirical Bayes estimate of the oracle posterior mean via
\begin{align}\label{eq-posterior-mean}
\estvec_i \coloneqq \E_{\npmle}\left[\truevec_i\mid X_i\right], \mathrm{~where~} \truevec_i\sim \npmle \mathrm{~and~} X_i\mid\truevec_i\sim \cn(\truevec_i, \Sigma_i).
\end{align}

Figure~\ref{fig-cmd} shows the~$\p=2$ dimensional data set of~\citet{anderson2018improving}, where each observation has a known error distribution and may be modeled as multivariate normal after a suitable transformation. The noise in the raw CMD of Figure~\ref{fig-cmd} obscures many known features of stellar evolution, rendering the raw CMD unreliable for downstream parallax inference. The right panel of Figure~\ref{fig-cmd} displays the empirical Bayes posterior means~$(\estvec_i)_{i=1}^n$ based on the NPMLE. The substantial shrinkage of our method reveals many recognizable features of the CMD, such as the red clump and a narrow red giant branch in the upper-right region of the plot, as well as the binary sequence tail distinct from the main sequence tail in the bottom-center region. The NPMLE~$\npmle$ and corresponding posterior means~$(\estvec_i)_{i=1}^n$ thus offer a powerful approach to shrinkage estimation under minimal assumptions. In Section~\ref{sec-cmd}, we continue our analysis of the Gaia TGAS data, and in Section~\ref{sec-apogee}, we present another astronomy application for denoising~$d=19$ chemical abundance ratios of $n = 27,238$ stars.\\

\subsection{Applications to hierarchical linear modeling}\label{sec-hierarchical-linear-model}

Another common setting where the heteroscedastic covariances~$\Sigma_i$ are known or can be accurately estimated is in hierarchical linear modeling. In this setting, we observe nested data with~$n$ different groups, where the $i^{\text{th}}$ group contains~$N_i$ units, and each unit~$j\in \{1,\ldots,N_i\}$ has~$\p$ covariates $X_{ij}\in \R^\p$ and a scalar response~$y_{ij}\in \R$. Consider the model
\[
y_{ij} = \langle X_{ij}, \beta^*_i\rangle + \eps_{ij}, \qquad \text{where } \beta^*_i \simiid G^*\text{ and }\eps_{ij} \simiid \cn(0, \sigma^2).
\]
Writing $y_i = (y_{i1},\ldots,y_{iN_i}) \in \R^{N_i}$ and $X_i = (X_{i1},\ldots,X_{iN_i})^\mathsf{T}\in \R^{N_i\times \p}$, we can write the model more compactly as
\begin{align*}
y_i \mid \beta^*_i& \simind \cn(X_i\beta^*_i, \sigma^2I_{N_i}) \qquad\text{where }\beta^*_i \simiid G^*
\end{align*}
for $i=1,\ldots,n$. If $N_i > d$ for all $i$, we can summarize each individual regression problem with the ordinary least squares (OLS) solution~$b_i = (X_i^\mathsf{T}X_i)^{-1}X_i^\mathsf{T}y_i$, which satisfies
\begin{align*}
b_i \mid \beta^*_i &\simind \cn(\beta^*_i, \Sigma_i), \qquad \mbox{with } \beta^*_i \simiid G^*,  \qquad \;\mbox{for } i\in \{1,\dots, n\},
\end{align*}
where $\Sigma_i = \sigma^2(X_i^\mathsf{T}X_i)^{-1}$. Thus, the data $(b_i, \Sigma_i)_{i=1}^n$ follow the multivariate, heteroscedastic model~\eqref{eq-obs-model}, where the corresponding covariance matrices are typically known up to the scalar~$\sigma^2$. If $\sigma^2$ is not known, we can estimate it very accurately with 
\[
\hat\sigma^2 = \frac{1}{\sum_{i=1}^n (N_i-\p)} \sum_{i=1}^n\|y_i - X_ib_i\|_2^2.
\]
When $n$ is large, this will be a reliable estimate of the variance~$\sigma^2$ even if each $N_i$ is relatively close to~$\p$. From here, we proceed as in Section~\ref{sec-denoising-app}, computing the NPMLE $\npmle = \argmax_G \prod_{i=1}^n f_{G, \Sigma_i}(b_i)$ and performing shrinkage on the regression coefficients via the posterior mean 
\[\hat\beta_i := \E_{\npmle}[\beta^*_i\mid b_i],\] 
treating our estimate~$\npmle$ as the prior. 
A common assumption, for instance in meta-analysis \citep{dersimonian1986meta}, is that~$\trueprior$ is itself a Gaussian~$\trueprior = \mathcal{N}(\mu^*, V^*)$, in which case the posterior mean shrinks all~$b_i$ towards the prior mean~$\mu^*$. Our model of course allows for this possibility, but it also allows for much more complex structures~$\trueprior$, which perform nonlinear shrinkage and need not shrink all observations towards a single point. We apply the NPMLE to hierarchical linear models in two settings, an education data set (Section~\ref{sec-math-scores}) and a microarray data set (Section~\ref{sec-leukemia}).

\subsection{Computational and statistical properties}

The idea of using the NPMLE to estimate a prior distribution, due to \citet{robbins1950generalization}, has seen a resurgence in recent years \citep{jiang2009general, jiang2010empirical, koenker2014convex, dicker2016high, gu2016problem, koenker2017rebayes, feng2018approximate, kim2020fast, efron2019bayes, saha2020nonparametric, jiang2020general, polyanskiy2020self, deb2021two}. These advancements, taken together, have begun to establish the NPMLE as a formidable approach to shrinkage estimation both in theory and in practice. All this prior work has focused on either the univariate setting~$\p=1$ or the homoscedastic setting~$\Sigma_i\equiv \Sigma$, however. Our work extends the NPMLE to the practically important and more general setting of multivariate and heteroscedastic errors, uncovering a number of important differences.

Basic properties of the NPMLE that are well-understood in the univariate, homoscedastic setting~\citep{lindsay1995mixture} have not received careful attention in more complex settings. We verify in Lemma~\ref{lem-characterization} that a solution~$\npmle$ exists for every instance of the optimization problem~\eqref{eq-NPMLE}, and we record the first-order optimality conditions characterizing the solution set. Similar to the univariate, homoscedastic setting, there exists a solution~$\npmle$ which is discrete with at most~$n$ atoms, and the sequence of fitted values~$\hat{L} \equiv (\hat{L}_1,\dots,\hat{L}_n)= (f_{\npmle, \Sigma_i}(X_i))_{i=1}^n$ is unique, i.e. every solution~$\npmle$ has the same sequence of fitted likelihood values~$\hat{L}$. 

An important contribution of Lemma~\ref{lem-characterization} is our reinterpretation of the characterizing system of inequalities in terms of a natural `dual' mixture density~$\widehat\psi_n$. Specifically,~$\widehat\psi_n$ is a heteroscedastic, $n$-component mixture density---a convex combination of Gaussian bumps centered at the data points~$\cn(X_i, \Sigma_i)$ with weights inversely proportional to~$\hat{L}_{i}$ for~$i=1,\ldots, n$---such that the support of every NPMLE~$\npmle$ is contained in the set of the global maximizers of~$\widehat\psi_n$. This observation has a number of important consequences that we explore in detail in Section~\ref{sec-characterization}; in particular, tools from algebraic statistics for studying the modes of Gaussian mixtures~\citep{ray2005topography, amendola2020maximum} translate directly into results on the support set. We leverage this connection to establish that~$\npmle$ is not necessarily unique when~$\p > 1$, even in the homoscedastic case. This finding is distinctive from the univariate, homoscedastic case where it is known that~\eqref{eq-NPMLE} has a unique solution for every problem instance~\citep{lindsay1993uniqueness}. Our counterexample in Lemma~\ref{rem-nonunique} appears to be new and seems to invalidate prior claims of strict concavity of the log-likelihood \citep{marriott2002local, koenker2017rebayes}. Whereas the fitted values~$\hat{L}$ are always unique, our counterexample also demonstrates that the empirical Bayes posterior means~$(\estvec_i)_{i=1}^n$ are not necessarily unique. 

The problem of computing a solution~$\npmle$ is complicated by the presence of multivariate, heteroscedastic errors. The main difficulty in general is that the NPMLE solves an infinite-dimensional optimization problem. Since~$\npmle$ may be taken to be discrete with at most~$n$ atoms, a solution can in principle be found with a finite mixture model. In particular, defining the set of discrete distributions with at most $k\ge 1$ atoms, 
\[\Ps_k(\R^\p) = \left\{\sum_{j=1}^kw_j\delta_{a_j} : \sum_j w_j = 1, w \ge 0, a_j\in \R^\p, j=1, \ldots, k\right\},\]
maximum likelihood solutions over~$\Ps_n(\R^\p)$ are also NPMLEs. Hence, the EM algorithm can be applied to optimize~$(w_j, a_j)_{j=1}^n$, as first observed by \cite{laird1978nonparametric}, though EM over discrete distributions is prohibitively slow for moderately large~$n$ and suffers from the same non-convexity issue as XD. Many algorithms \citep{bohning1985numerical, lesperance1992algorithm, wang2007fast, liu2007partially} have been proposed for finding approximate solutions to the optimization problem~\eqref{eq-NPMLE}; \cite{koenker2014convex} identified a convex, finite-dimensional, highly scalable approximation. Instead of maximizing the log-likelihood of the data $\ell_n(G) = \frac{1}{n}\sum_{i=1}^n\log f_{\aprior, \Sigma_i}(X_i)$ over all probability measures $\aprior\in \Ps_n(\R^\p)$, the idea is to maximize the log-likelihood over~$\Ps(\ca)$, the collection of all probability measures supported on a finite set~$\ca\subset\R^\p$. If $\ca$ has $m > 0$ elements, then $\Ps(\ca)$ is isometric to the $m-1$ dimensional simplex $\Delta_{m-1} \coloneqq \{w \in \R_+^m : \sum_j w_j=1\}$, and maximizing the likelihood corresponds to optimizing over the mixing proportions~$w$, which is a convex optimization problem. When~$\p=1$, it is straightforward to see that~$\npmle$ is supported on the range of the data~$[X_{(1)}, X_{(n)}]$, so \cite{koenker2014convex} proposed taking~$\ca$ to discretize this range. \citet[][Proposition 5]{jiang2009general} bounded the discretization error in $\p=1$ dimension, establishing that optimizing the weights $w$ via EM can lead to a good approximation once $m\asymp (\log n)\sqrt{n}$. \cite{dicker2016high} further justified the discretization scheme in~$\p=1$ dimension by showing the discretized NPMLE is statistically indistinguishable from~$\npmle$ once the analyst uses at least~$m=\lfloor\sqrt{n}\rfloor$ atoms.

The discretization approach naturally extends to multivariate, heteroscedastic settings, but to our knowledge, no principled recommendations are available for choosing~$\ca\subset\R^\p$ in general. \cite{feng2018approximate} recommended taking~$\ca$ to be a grid over a compact region containing the data. We address the key questions of how to choose this compact region and how the discretization error depends on the fineness of the grid. For choosing a compact region to discretize, a natural desideratum is that the region should contain the support of~$\npmle$. To this end, in Proposition~\ref{prop-support} we present compact support bounds on the NPMLE in terms of the data~$(X_i, \Sigma_i)_{i=1}^n$. When~$\p=1$ our support bounds reduce to the range of the data, reaffirming the original suggestion of~\cite{koenker2014convex}, and when $\p>1$ but the errors are homoscedastic, it suffices to discretize the convex hull of~$(X_i)_{i=1}^n$. Interestingly, with multivariate and heteroscedastic errors, the support of the NPMLE can lie outside the convex hull of~$(X_i)_{i=1}^n$, so a different region known as the \textit{ridgeline manifold}~$\cm$ of $(X_i, \Sigma_i)_{i=1}^n$ is needed. Fortunately, this region $\cm\subset\R^\p$ is compact, and the NPMLE over $\Ps(\cm)$ agrees with the NPMLE over~$\Ps(\R^\p)$. This justifies the choice of~$\ca$ as a $\delta > 0$ cover of~$\cm$, and in Proposition~\ref{prop-approximation}, we verify that as $\delta\downarrow 0$, the log-likelihood of the discretized NPMLE approaches that of the NPMLE. We prove a quantitative bound on the gap for fixed~$\delta$, providing some guidance on how the discretization error depends on the fineness of the grid. {In Section~\ref{sec-discretization-for-statistical-purposes}, we connect these computational guarantees to our statistical guarantees, showing that it suffices to discretize~$\cm$ at a resolution~$\delta\asymp \sqrt{\frac{(\log n)^\p}{n}}$ in order to find an approximate solution that satisfies all of our statistical guarantees.}

{The standard gridding approach clearly suffers from the curse of dimensionality: if~$\ca$ is a~$\delta$-cover of the ridgeline manifold~$\cm$, the number of points in~$\ca$ typically scales like~$\left(\frac{1}{\delta}\right)^d$, so it is often difficult to compute the NPMLE over a fine grid even when~$\p=3$. An alternative---the \textit{exemplar method} proposed by~\cite{lashkari2008convex} (see also~\citet{bohning1999computer})---is to take the finite set~$\ca$ to be all of the observations~$\{X_i\}_{i=1}^n$. In Section~\ref{sec-exemplar}, we show some toy examples illustrating that the exemplar method can produce grossly suboptimal solutions, since the set~$\ca$ is not a very fine cover of the ridgeline manifold~$\cm$. We propose an extension, called the \textit{exemplar+ method}, to systematically sample additional points from the ridgeline manifold~$\cm$. We show through toy examples and simulation experiments that our proposal consistently produces approximate NPMLEs satisfying the assumptions of our theoretical guarantees, even in~$\p = 20$ dimensions.}

Our principled and efficient methods of computation facilitate simulation studies assessing the performance of the empirical Bayes estimate~$\estvec_i$ in a setting where we can actually compare to the oracle Bayes estimate~$\orvec_i$. Figure~\ref{fig-sim} illustrates the method on simulated data. The means $\truevec_i$ were drawn i.i.d.~from a circle of radius two, and the data $X_i\mid \truevec_i$ were drawn according to~\eqref{eq-obs-model} using a variety of diagonal covariance matrices $\Sigma_i = \begin{bmatrix}\sigma_{1,i}^2 & 0 \\ 0 & \sigma_{2,i}^2\end{bmatrix}$, taking each $\sigma_{j,i}^2 \in (1/2, 3/4)$. Visually, it is clear that the empirical Bayes estimates improve upon the observations by shrinking towards the underlying circle; the corresponding mean squared errors were $\frac{1}{n}\sum_{i=1}^n\|\estvec_i - \truevec_i\|_2^2 = 0.87$ and $\frac{1}{n}\sum_{i=1}^n\|X_i - \truevec_i\|_2^2 = 1.46$, respectively. The oracle, which minimizes the mean squared error in expectation, attained an error of $\frac{1}{n}\sum_{i=1}^n\|\orvec_i - \truevec_i\|_2^2 = 0.84$. While the oracle cannot be computed in practice because~$\trueprior$ is unknown, this value sets a benchmark in simulations to which we may compare the performance of bona fide estimators. The empirical Bayes estimates not only track well with this benchmark; the individual estimates also track remarkably well with the oracle. In our simulation, the {\sl regret}---defined as the mean squared error between the estimator~$(\estvec_i)_{i=1}^n$ and the oracle~$(\orvec_i)_{i=1}^n$---was $\frac{1}{n}\sum_{i=1}^n\|\estvec_i - \orvec_i\|_2^2 = 0.03$. Whereas $\estvec_i$ is a function of the observed data, the oracle~$\orvec_i$ makes optimal use of the unknown prior~$\trueprior$, making the similarity between the two especially striking. 

\begin{figure}[t!]
\centering
\includegraphics[width=.9\textwidth]{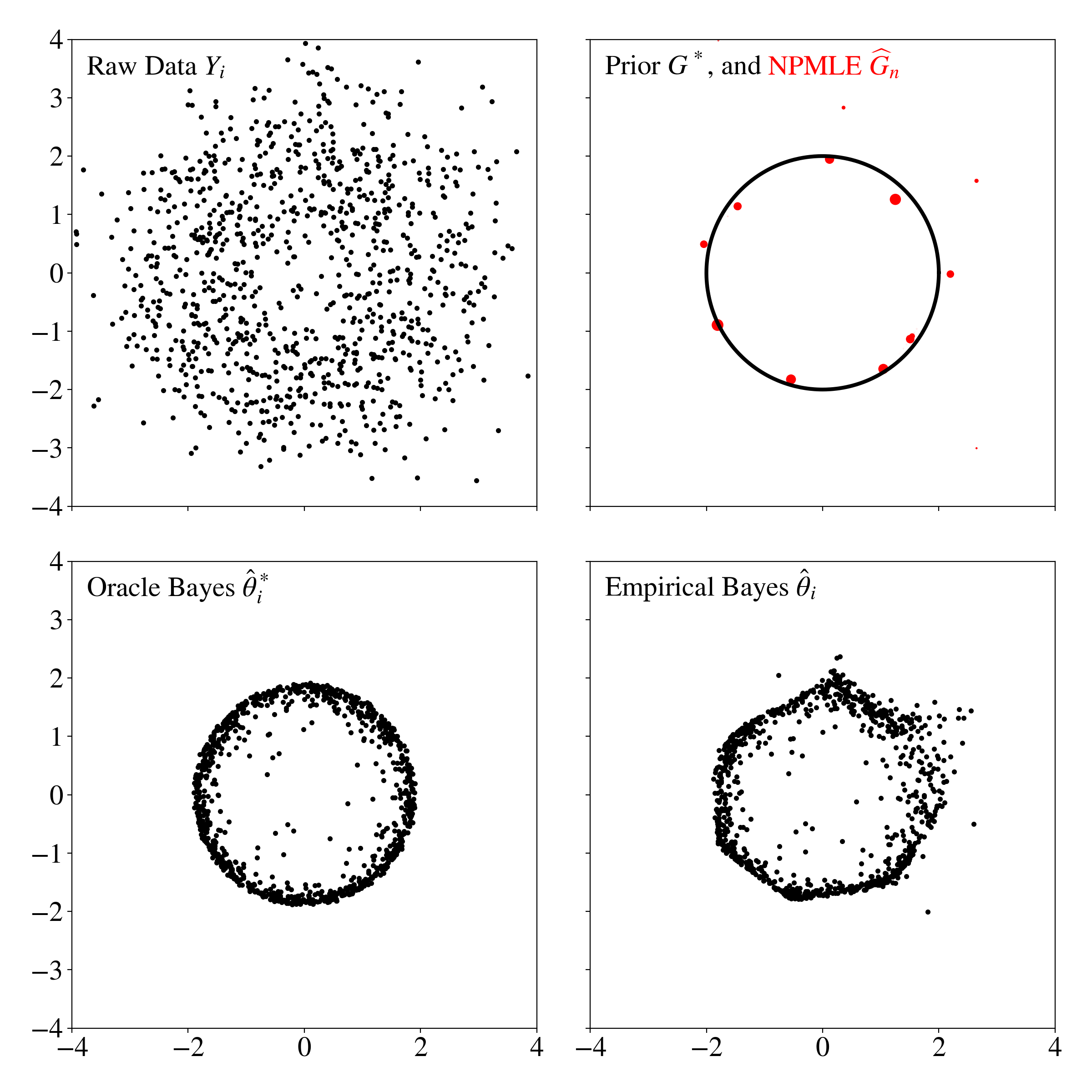} 
\caption{Toy data of size $n=1,000$ and $\p=2$. Top: observations $X_i$ (left) were generated by adding heteroscedastic Gaussian errors to the underlying means $\truevec_i\simiid \trueprior$ (right), generated i.i.d. uniformly from a circle of radius $2$. Our discrete estimate~$\npmle$ of the prior is shown in red over the prior~$\trueprior$ in black. Bottom: a comparison of oracle Bayes $\orvec_i$ (left) based on knowledge of the prior distribution $\trueprior$ and empirical Bayes $\estvec_i$ (right), a function of the observed data.}\label{fig-sim}
\end{figure}

This striking similarity between $\estvec_i$ and $\orvec_i$ affirms the empirical Bayes adage that ``{\sl large data sets of parallel situations carry within them their own Bayesian information}'' \citep{efron2016computer}. However, the setting of Figure~\ref{fig-sim} is complicated by the fact the situations are not directly parallel, in that each observation $X_i$ has a distinct error distribution. Even in heteroscedastic settings, the extent to which we glean prior information for the purpose of denoising is captured by the empirical Bayes regret $\frac{1}{n}\sum_{i=1}^n\|\estvec_i - \orvec_i\|_2^2$. Theorem~\ref{thm-denoising} develops a detailed profile of the finite-sample regret properties of the NPMLE for denoising. We show that under certain tail conditions on~$\trueprior$ the regret is bounded by a rate that is nearly parametric in~$n$, i.e.~$\frac{1}{n}$ up to logarithmic multiplicative factors. The regret still converges at a slower, nonparametric rate under less structured conditions, where~$\trueprior$ may have heavy tails. Furthermore, when~$\trueprior$ possesses finer structure, such as the clustering problem where~$\trueprior$ is a discrete measure with~$k^*$ atoms, we prove that the regret is bounded from above by~$\frac{k^*}{n}$ up to logarithmic multiplicative factors in~$n$. The clustering case is particularly remarkable, as the NPMLE is completely tuning-free, with no knowledge of~$k^*$, yet~$\npmle$ performs essentially as well as any estimator which knows the number of clusters~$k^*$. Thus, Theorem~\ref{thm-denoising} demonstrates that the NPMLE effectively discovers structure when available and also effectively learns when structure is unavailable. Theorem~\ref{thm-denoising} generalizes the regret bounds of \citet{saha2020nonparametric} and \citet{jiang2020general} who analyzed the homoscedastic~$\Sigma_i\equiv \Sigma$ setting and the univariate~$\p=1$ setting, respectively. These papers in turn built upon \cite{jiang2009general} who studied the univariate, homoscedastic setting.

A key ingredient in the analysis of the regret is a more explicit representation of the estimator~$(\estvec_i)_{i=1}^n$ and oracle~$(\orvec_i)_{i=1}^n$. The oracle posterior mean~\eqref{eq-oracle} has the following alternative expression, known as Tweedie's formula \citep{dyson1926method, robbins1956proceedings, efron2011tweedie, banerjee2023nonparametric}:
\begin{align}\label{eq-oracle-tweedie}
\orvec_i 
&= X_i + \Sigma_i\frac{\nabla f_{\trueprior, \Sigma_i}(X_i)}{f_{\trueprior, \Sigma_i}(X_i)}.
\end{align}
Similarly, our plug-in estimate can be written as
\begin{align}\label{eq-gmleb-tweedie}
\estvec_i 
&= X_i + \Sigma_i\frac{\nabla f_{\npmle, \Sigma_i}(X_i)}{f_{\npmle, \Sigma_i}(X_i)}.
\end{align}
Tweedie's formula clarifies that under model~\eqref{eq-obs-model} the posterior means only depend on the prior~$\trueprior$ via the marginal likelihood~$f_{\trueprior, \Sigma_i}(X_i)$ and its gradient. \cite{jiang2009general} first leveraged this observation to relate the empirical Bayes regret to the problem of estimating the marginal density. In heteroscedastic problems, there are~$n$ different marginal densities,~$(f_{\trueprior, \Sigma_i})_{i=1}^n$, to estimate, and~{$n$} corresponding estimators~$(f_{\npmle, \Sigma_i})_{i=1}^n$. We show in Theorem~\ref{thm-density-estimation} and Corollary~\ref{cor-density-estimation} that the NPMLE achieves similar adaptive rates in the density estimation problem under an appropriate average Hellinger distance across all~$i=1, \ldots,n$ estimands~$(f_{\trueprior, \Sigma_i})_{i=1}^n$. 

Whereas most recent work has focused on properties of~$\npmle$ for density estimation and denoising, the NPMLE is potentially much more generally applicable as a plug-in estimate of the prior. To expand our understanding of its applicability, we present the first analysis of the deconvolution error for the NPMLE. Whereas density estimation captures the problem of describing the observations~$(X_i)_{i=1}^n$, deconvolution is the equally natural problem of interpreting the infinite-dimensional parameter~$\trueprior$. We study the accuracy of the NPMLE under a Wasserstein distance $W_2(\npmle, \trueprior)$. The Wasserstein distance is particularly useful for this problem since~$\npmle$ and~$\trueprior$ are typically mutually singular; in particular,~$\trueprior$ may be absolutely continuous whereas~$\npmle$ is always discrete. The Wasserstein distance will be discussed in detail in Section~\ref{sec-stats}. We show in Theorem~\ref{thm-deconvolution} that~$\npmle$ attains the minimax rate of deconvolution, which happens to be a very slow, logarithmic rate~$\frac{1}{\log n}$. Inspired by the richness of the density estimation and denoising results, we hint at some of the adaptation properties of the NPMLE under the Wasserstein loss; Theorem~\ref{thm-deconvolution-point-mass} shows that when~$\trueprior = \delta_\mu$ is a point mass distribution, the Wasserstein rate improves dramatically to~$n^{-1/4}$ up to logarithmic factors.

The rest of the paper is organized as follows: Section~\ref{sec-characterization} systematically addresses basic properties of the NPMLE, including existence and non-uniqueness. Section~\ref{sec-computation} gives a full account of the approximate computation of NPMLEs. Section~\ref{sec-stats} establishes finite-sample risk bounds on the accuracy of~$\npmle$ as an estimator of~$\trueprior$ for the purposes of density estimation, denoising and deconvolution. In Section~\ref{sec-app}, we apply the method to astronomy data to construct a fully data driven color-magnitude diagram of $1.4$ million stars and compare our method to extreme deconvolution where it has previously been applied \citep{anderson2018improving}. We also apply the method to chemical abundance data for a smaller subset of stars that has previously been analyzed by \citet{ratcliffe2020tracing}. {We then present two applications using the hierarchical linear model, one on education data and the other on microarray data of leukemia patients.} Section~\ref{sec-discussion} concludes with some discussion of future work. The proofs {and some simulation experiments} are in the Supplementary Material. All figures and analyses in this manuscript can be reproduced using code at the following repository: \href{https://github.com/jake-soloff/Multivariate-Heteroscedastic-NPMLE}{https://github.com/jake-soloff/Multivariate-Heteroscedastic-NPMLE}. \noindent{An open source software package for computing approximate NPMLEs and empirical Bayes estimates in Python is available at: \href{https://github.com/jake-soloff/npeb}{https://github.com/jake-soloff/npeb}.\\}\label{code-npeb}

\section{Structural properties}\label{sec-characterization} 

In this section, we establish some basic properties of solutions to the nonparametric maximum likelihood problem~\eqref{eq-NPMLE}, including existence, non-uniqueness, invariance under certain transformations, and bounds on the support. These results provide a foundation both for computing~$\npmle$ (Section~\ref{sec-computation}) and for understanding its statistical properties (Section~\ref{sec-stats}). Our first result extends the well-known characterization of~$\npmle$ for univariate, homoscedastic errors \citep[][Theorems~18-21]{lindsay1995mixture} to our more general setting. 

\begin{lemma}\label{lem-characterization} Problem~\eqref{eq-NPMLE} attains its maximum: there exists a discrete solution~$\npmle$ with at most~$n$ atoms, and the vector $\hat{L} \equiv (\hat{L}_1,\dots,\hat{L}_n)= (f_{\npmle, \Sigma_i}(X_i))_{i=1}^n$ of fitted likelihood values is unique. Moreover, $\npmle\in \Ps(\R^\p)$ solves~\eqref{eq-NPMLE} if and only if 
\[
D(\npmle, \vartheta) \le 0\text{ for all }\vartheta\in \R^\p\text{, where }D(\aprior, \vartheta) \coloneqq \frac{1}{n}\sum_{i=1}^n\frac{\varphi_{\Sigma_i}(X_i-\vartheta)}{f_{\aprior,\Sigma_i}(X_i)} -1.
\]
The support of any $\npmle$ is contained in the zero set $\mathcal{Z} \coloneqq \{\vartheta : D(\npmle, \vartheta) = 0\}$; the zero set $\mathcal{Z}$ is equal to the set of global maximizers of the $n$-component, heteroscedastic {\sl dual} mixture density
\[\widehat\psi_n(\vartheta) \coloneqq \sum_{i=1}^n \left(\frac{\hat{L}_i^{-1}}{\sum_{\iota=1}^n\hat{L}_\iota^{-1}}\right)\varphi_{\Sigma_i}(X_i-\vartheta).\]
\end{lemma}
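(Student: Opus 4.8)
The plan is to follow the blueprint of the univariate homoscedastic theory (as in \citet[][Theorems 18--21]{lindsay1995mixture}), checking that nothing breaks in the multivariate, heteroscedastic setting. I would introduce the \emph{likelihood curve} $\Gamma \coloneqq \{\,(\varphi_{\Sigma_i}(X_i-\vartheta))_{i=1}^n : \vartheta\in\R^\p\,\}\subset \R_{\ge 0}^n$. Because $f_{\aprior,\Sigma_i}(X_i)$ is linear in $\aprior$, the set of attainable fitted-likelihood vectors $\{(f_{\aprior,\Sigma_i}(X_i))_{i=1}^n : \aprior\in\Ps(\R^\p)\}$ equals $\conv(\Gamma)$, so \eqref{eq-NPMLE} is the maximization of the strictly concave, coordinatewise-increasing functional $\Phi(L)\coloneqq \frac1n\sum_{i=1}^n\log L_i$ over the convex set $\conv(\Gamma)$. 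For existence, each $\varphi_{\Sigma_i}$ is bounded by $\varphi_{\Sigma_i}(0)$ and vanishes as $\|\vartheta\|\to\infty$, so $\overline{\Gamma}=\Gamma\cup\{0\}$ is compact and hence $\conv(\Gamma\cup\{0\})$ is compact; a $\Phi$-maximizing sequence $L^{(k)}\in\conv(\Gamma)$ stays bounded, and it stays bounded away from $\partial\R_{>0}^n$ since otherwise $\Phi(L^{(k)})\to-\infty$, contradicting that the supremum is finite (it is at least $\Phi$ evaluated at any $\delta_\vartheta$). Passing to a subsequence, $L^{(k)}\to L^\star$ with all coordinates positive and $L^\star\in\overline{\conv(\Gamma)}\subseteq\conv(\Gamma\cup\{0\})$. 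Writing $L^\star$ through Carath\'eodory as a convex combination of points of $\Gamma\cup\{0\}$, the weight placed on $0$ must be $0$: otherwise renormalizing the remaining terms to a probability vector strictly increases every coordinate, hence $\Phi$, contradicting maximality. Thus $L^\star\in\conv(\Gamma)$, and since $\Gamma$ is connected (a continuous image of $\R^\p$), the Fenchel--Bunt strengthening of Carath\'eodory lets me write $L^\star$ as a convex combination of at most $n$ points of $\Gamma$, which produces a solution $\npmle=\sum_{j=1}^n w_j\delta_{\vartheta_j}$. In particular every fitted likelihood $\hat L_i=f_{\npmle,\Sigma_i}(X_i)$ is positive.

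Uniqueness of $\hat L$ is then immediate: $\Phi$ is \emph{strictly} concave on $\R_{>0}^n$ and $\conv(\Gamma)$ is convex, so the maximizer of $\Phi$ over $\conv(\Gamma)$ is unique; since $D(\npmle,\cdot)$ depends on $\npmle$ only through $\hat L$, the zero set $\Z$ is well defined regardless of which solution we pick. For the variational characterization, I would use that $\aprior\mapsto\ell(\aprior)\coloneqq\frac1n\sum_{i=1}^n\log f_{\aprior,\Sigma_i}(X_i)$ is concave on $\Ps(\R^\p)$ (composition of concave $\log$ with the affine maps $\aprior\mapsto f_{\aprior,\Sigma_i}(X_i)$), so $\npmle$ is a global maximizer if and only if every one-sided directional derivative at $\npmle$ is $\le 0$. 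Along the segment $\aprior_t=(1-t)\npmle + tH$ for a probability measure $H$, differentiating the finite sum termwise at $t=0^+$ and using $f_{H,\Sigma_i}(X_i)=\int\varphi_{\Sigma_i}(X_i-\vartheta)\,dH(\vartheta)$ gives
\[
\frac{d}{dt}\Big|_{t=0^+}\ell(\aprior_t)=\frac1n\sum_{i=1}^n\frac{f_{H,\Sigma_i}(X_i)}{f_{\npmle,\Sigma_i}(X_i)}-1=\int D(\npmle,\vartheta)\,dH(\vartheta).
\]
This is $\le 0$ for every probability measure $H$ if and only if $D(\npmle,\vartheta)\le 0$ for every $\vartheta\in\R^\p$ (sufficiency is clear; necessity follows on taking $H=\delta_\vartheta$), which is exactly the stated condition.

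For the support claim, evaluate $\int D(\npmle,\vartheta)\,d\npmle(\vartheta)=\frac1n\sum_{i=1}^n \frac{f_{\npmle,\Sigma_i}(X_i)}{f_{\npmle,\Sigma_i}(X_i)}-1=0$; since $\vartheta\mapsto D(\npmle,\vartheta)$ is continuous and everywhere $\le 0$ by optimality, a continuous nonpositive function with zero $\npmle$-integral must vanish on $\operatorname{supp}(\npmle)$, so $\operatorname{supp}(\npmle)\subseteq\Z$, and in particular $\Z\neq\emptyset$. The identification $\Z=\argmax_{\vartheta\in\R^\p}\widehat\psi_n(\vartheta)$ is then a one-line rearrangement: from $\sum_{i=1}^n\hat L_i^{-1}\varphi_{\Sigma_i}(X_i-\vartheta)=\big(\sum_{\iota=1}^n\hat L_\iota^{-1}\big)\widehat\psi_n(\vartheta)$ one gets $D(\npmle,\vartheta)=\tfrac1n\big(\sum_{\iota=1}^n\hat L_\iota^{-1}\big)\widehat\psi_n(\vartheta)-1$, an increasing affine function of $\widehat\psi_n(\vartheta)$. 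Hence ``$D(\npmle,\cdot)\le 0$ on $\R^\p$ with equality precisely on $\Z$'' is equivalent to ``$\widehat\psi_n\le c$ on $\R^\p$ with equality precisely on $\Z$'' for $c=n/\sum_{\iota}\hat L_\iota^{-1}$; since $\Z$ is nonempty, $c=\max_{\R^\p}\widehat\psi_n$ and $\Z$ is exactly the maximizer set.

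The step I expect to be the main obstacle is the existence half --- in particular making rigorous that the limiting likelihood vector $L^\star$ lies in $\conv(\Gamma)$ itself, not merely in its closure, and that it admits a representation with at most $n$ atoms. This is where compactness of $\overline\Gamma=\Gamma\cup\{0\}$ (so the feasible set is bounded and the only escape direction is toward $0$), the strict monotonicity of $\Phi$ used to rule out mass on $0$, and the Fenchel--Bunt refinement of Carath\'eodory for connected sets all have to be combined. Everything afterwards --- uniqueness of $\hat L$, the first-order condition, the support bound, and the dual reformulation --- then follows quickly from concavity and the elementary rearrangement above.
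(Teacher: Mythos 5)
Your proposal is correct and follows essentially the same route as the paper: maximize the strictly concave functional $\frac{1}{n}\sum_i \log L_i$ over the convex hull of the likelihood curve to get existence, a discrete solution, and uniqueness of $\hat L$, then obtain the characterization via one-sided directional derivatives and the dual reformulation by the same affine rearrangement. The only (harmless) deviations are lemma-level: you invoke Fenchel--Bunt for the $\le n$ atom bound where the paper uses Carath\'eodory for boundary points of the hull, and you deduce $\mathrm{supp}(\npmle)\subseteq\mathcal{Z}$ by integrating $D(\npmle,\cdot)\le 0$ against $\npmle$ rather than by the paper's two-sided perturbation with conditional measures $\npmle^B$ --- both substitutions are valid and, if anything, slightly cleaner.
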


We prove Lemma~\ref{lem-characterization}, along with all results in this section, in Section~\ref{sec-proofs-characterization} of the Supplementary Material. The first statement of the lemma guarantees the existence of a discrete solution, which we typically write as~$\npmle = \sum_{j=1}^{\khat}\hat{w}_j\delta_{\hat{a}_j}$ (here $\hat{w}_j\ge 0$, $\sum_j\hat{w}_j = 1$ and $\hat{a}_j\in \R^\p$), with $\khat\le n$ providing an upper bound on the complexity of at least one solution. This implies that~$\npmle$ may be taken to be the maximum likelihood solution to a $\khat$-component, heteroscedastic Gaussian mixture model where $\khat$ is selected in a data dependent manner. Since finite mixture models are nested by the number of components and $\khat\le n$, we may also say in general that~$\npmle$ is the maximum likelihood solution to an $n$-component, heteroscedastic Gaussian mixture model. 

The bound $\khat\le n$ is tight: for each $n\ge 1$, there are sequences of observations $(X_i)_{i=1}^n$ and covariances $(\Sigma_i)_{i=1}^n$ such that the smallest number of components $\khat$ of any solution~$\npmle$ to~\eqref{eq-NPMLE} is precisely $n$ \citep[see, e.g.,][p. 116]{lindsay1995mixture}. However, in practice, the number of components is typically much smaller than $n$. For instance, in the univariate, homoscedastic case, \citet{polyanskiy2020self} established a much stronger bound of $\khat = O_P(\log n)$ under certain conditions on the prior distribution~$\trueprior$. 

The last part of Lemma~\ref{lem-characterization} states that the atoms of~$\npmle$ occur at the global maximizers of the $n$-component Gaussian mixture $\widehat\psi_n$, which has component distributions of the form $\cn(X_i, \Sigma_i)$ for $i=1,\dots,n$ with weights inversely proportional to fitted likelihoods $\hat{L}$. Results on the modes of Gaussian mixtures~\citep[e.g.][]{ray2005topography, dytso2019capacity, amendola2020maximum} thus provide information about the support of the NPMLE; in particular, our next two results exploit this connection to yield novel results on the NPMLE.

In the univariate $\p=1$ and homoscedastic setting $\Sigma_i\equiv \sigma^2$, it is additionally known that~\eqref{eq-NPMLE} has a {\sl unique} solution $\npmle$ for all observations $X_1,\dots, X_n$ \citep{lindsay1993uniqueness}. This means that, for every data set $X_1,\dots, X_n$ and every variance level~$\sigma^2 > 0$, there is a unique probability measure~$\npmle\in \Ps(\R)$ such that $\hat{L}_i = f_{\npmle, \sigma^2}(X_i)$ for all $i$, where $\hat{L}$ is the unique vector of optimal likelihoods from Lemma~\ref{lem-characterization}. We observe, however, that uniqueness of the solution~$\npmle$ may not hold when $\p > 1$, even with isotropic covariances $\Sigma_i\equiv \sigma^2I_\p$. 

\begin{lemma}\label{rem-nonunique} Let $\p=2$, $n=3$ and $X_1 = (0, 1)$, $X_2 = (\frac{\sqrt{3}}{2}, -\frac{1}{2})$,  $X_3 = (-\frac{\sqrt{3}}{2}, -\frac{1}{2})$. Then~\eqref{eq-NPMLE} with data $(X_i)_{i=1}^3$, covariances $\Sigma_i\equiv \sigma^2I_2$ and $\sigma^2 = 3/(\log 256)$ has infinitely many solutions of the form \[\npmle = \alpha \delta_0 + (1-\alpha)\frac{1}{3}\sum_{i=1}^3\delta_{X_i/2}\]
where $\alpha\in[0,1]$.
\end{lemma}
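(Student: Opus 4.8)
The plan is to apply the first‑order characterization of Lemma~\ref{lem-characterization}: for each $\alpha\in[0,1]$, write $\aprior_\alpha:=\alpha\delta_0+(1-\alpha)\bar{\aprior}$ with $\bar{\aprior}:=\tfrac13\sum_{i=1}^3\delta_{X_i/2}$, and show that $\aprior_\alpha$ satisfies $D(\aprior_\alpha,\vartheta)\le 0$ for every $\vartheta\in\R^2$; since $\aprior_\alpha(\{0\})=\alpha$, distinct $\alpha$ give distinct measures, so this yields the claimed infinitude of solutions. The first step is to show that all the $\aprior_\alpha$ share one fitted‑likelihood vector. The points $X_1,X_2,X_3$ are the vertices of an equilateral triangle centered at the origin and $\{X_1/2,X_2/2,X_3/2\}$ is its $\tfrac12$‑scaled copy, so the whole instance—data, covariances $\sigma^2 I_2$, and each $\aprior_\alpha$—is invariant under the dihedral group of order $6$; in particular a rotation by $2\pi/3$ fixes each $\aprior_\alpha$ while cyclically permuting $X_1,X_2,X_3$, so $f_{\aprior_\alpha,\sigma^2 I_2}(X_i)$ does not depend on $i$. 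This common value is affine in $\alpha$, interpolating between $f_{\delta_0,\sigma^2 I_2}(X_1)$ and $f_{\bar{\aprior},\sigma^2 I_2}(X_1)$, hence is constant in $\alpha$ exactly when these two coincide. Writing $a:=1/(2\sigma^2)$ and using $\|X_i\|^2=1$, $\|X_1-X_1/2\|^2=\tfrac14$, $\|X_1-X_j/2\|^2=\tfrac74$ for $j\ne 1$, this amounts to $3e^{-a}=e^{-a/4}+2e^{-7a/4}$, i.e.\ (with $t:=e^{-a/4}$) $(t^3-1)(2t^3-1)=0$, whose only root with $t\in(0,1)$ is $t=2^{-1/3}$, i.e.\ $\sigma^2=3/\log 256$. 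So at this noise level $\hat{L}\equiv(\ell,\ell,\ell)$ with $\ell:=\tfrac{1}{2\pi\sigma^2}2^{-4/3}$, independent of $\alpha$.

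With equal fitted likelihoods, the dual density of Lemma~\ref{lem-characterization} is the plain average $\widehat\psi_n(\vartheta)=\tfrac13\sum_{i=1}^3\varphi_{\sigma^2 I_2}(X_i-\vartheta)$, and the function $\vartheta\mapsto D(\aprior_\alpha,\vartheta)=\widehat\psi_n(\vartheta)/\ell-1$ is one and the same for every $\alpha$. Thus Lemma~\ref{lem-characterization} reduces the whole problem to the single statement $\widehat\psi_n(\vartheta)\le\ell$ for all $\vartheta\in\R^2$. A short computation (again via the identity above) gives $\widehat\psi_n(0)=\widehat\psi_n(X_i/2)=\ell$, so the atoms of every $\aprior_\alpha$ already attain the bound and only the global inequality remains. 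Expanding $\varphi_{\sigma^2 I_2}$ and $\|X_i-\vartheta\|^2=1-2\langle X_i,\vartheta\rangle+\|\vartheta\|^2$, the inequality $\widehat\psi_n\le\ell$ is equivalent to
\[\sum_{i=1}^3 e^{2a\langle X_i,\vartheta\rangle}\ \le\ 3\,e^{a\|\vartheta\|^2}\qquad\text{for all }\vartheta\in\R^2.\]

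To prove this I would first eliminate the angular variable using the Jacobi--Anger expansion. In polar coordinates $\langle X_i,\vartheta\rangle=\|\vartheta\|\cos(\phi-\theta_i)$ with $\theta_1,\theta_2,\theta_3$ equally spaced by $2\pi/3$; since $e^{z\cos\beta}=I_0(z)+2\sum_{k\ge 1}I_k(z)\cos(k\beta)$, summing over the three equally spaced phases annihilates every Fourier mode whose index is not a multiple of $3$, giving
\[\sum_{i=1}^3 e^{2a\langle X_i,\vartheta\rangle}=3I_0(c)+6\sum_{m\ge 1}I_{3m}(c)\cos\!\big(3m(\phi-\theta_1)\big),\qquad c:=2a\|\vartheta\|,\]
where $I_k$ is the modified Bessel function of the first kind. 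Because $I_k(c)\ge 0$ (strictly for $c>0$), the right‑hand side is largest when all the cosines equal $1$, i.e.\ when $\vartheta$ is aligned with one of the $X_i$; evaluating there yields $\sum_{i=1}^3 e^{2a\langle X_i,\vartheta\rangle}\le e^{2a\|\vartheta\|}+2e^{-a\|\vartheta\|}$. So everything collapses to the one‑variable inequality $e^{2ar}+2e^{-ar}\le 3e^{ar^2}$ for $r:=\|\vartheta\|\ge 0$, equivalently—using $a=\tfrac43\log 2$—to $e^{a(2r-r^2)}+2e^{-a(r+r^2)}\le 3$ for $r\ge 0$.

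This last, one‑dimensional inequality is the delicate point, because it is tight at two points by design: its left‑hand side and its derivative equal $3$ and $0$ at $r=0$, and again at $r=\tfrac12$ (which records $\widehat\psi_n$ attaining $\ell$ at $X_i/2$), so there is no slack to exploit. I would prove it by a direct study of $h(r):=3-e^{a(2r-r^2)}-2e^{-a(r+r^2)}$: it has a double zero at $r=0$ and at $r=\tfrac12$ with $h''>0$ there, it is increasing on $(\tfrac12,\infty)$ with $h(r)\to 3$ as $r\to\infty$, and on $(0,\tfrac12)$ the derivative $h'$ has exactly one sign change—all of which follow from elementary, if slightly tedious, estimates on $h$, $h'$, and $h''$. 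Granting the inequality, Lemma~\ref{lem-characterization} certifies every $\aprior_\alpha$, $\alpha\in[0,1]$, as an NPMLE, which proves the claim. The one genuine obstacle I anticipate is the angular step: the clean route above rests on the positivity of the Bessel coefficients $I_{3m}(c)$, and without such a device one is forced to rule out the off‑axis critical points of the Gaussian mixture $\widehat\psi_n$ by hand—exactly the kind of delicate mode‑counting analysis for Gaussian mixtures referenced after Lemma~\ref{lem-characterization}.
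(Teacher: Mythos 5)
Your argument is correct and goes by a genuinely different route than the paper's, and it is more self-contained on the one delicate point. The paper's proof runs through the uniqueness of the fitted-value vector: symmetry (Lemma~\ref{lem-invariance}) gives $\hat L_1=\hat L_2=\hat L_3$; Lemma~\ref{lem-characterization} identifies this common value with $\max_{\vartheta} f_{H,\sigma^2 I_2}(\vartheta)$ for $H=\tfrac13\sum_i\delta_{X_i}$; the value of that maximum, $\tfrac{2^{2/3}\log 2}{3\pi}$, is asserted on the basis of the Duistermaat-type mode analysis discussed before the lemma; and the proof then simply checks that $\delta_0$ and $\tfrac13\sum_i\delta_{X_i/2}$ both attain fitted values equal to this number at all three observations, so each, and hence every convex combination, is an NPMLE. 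You instead certify every $\aprior_\alpha$ directly through the first-order condition of Lemma~\ref{lem-characterization}; after your computation showing the common fitted value is independent of $\alpha$ exactly at $\sigma^2=3/\log 256$ (a nice bonus: it explains where that constant comes from), this reduces to the statement that $0$ and the $X_i/2$ are global maximizers of $f_{H,\sigma^2 I_2}$ --- which is precisely the fact the paper asserts rather than proves. Your Jacobi--Anger step (only the Fourier modes with index divisible by $3$ survive, and positivity of $I_{3m}$ aligns the maximizing direction with one of the $X_i$) is a valid and clean way to collapse that claim to the one-variable inequality $e^{2ar}+2e^{-ar}\le 3e^{ar^2}$ with $a=\tfrac43\log 2$, tight at $r=0$ and $r=\tfrac12$. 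So your route buys a complete verification of the mode structure at the cost of extra analysis, while the paper's is shorter but leans on the cited construction for the value of the maximum.

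The only part you leave as a sketch is that final one-variable inequality; your stated claims about $h$ are all true and the argument is completable by elementary calculus, so this is a loose end rather than a flaw. One way to nail it down: since the ratio of the two exponential terms in $h'$ is $2e^{-3ar}$, one has $\mathrm{sign}\,h'(r)=\mathrm{sign}\,\chi(r)$ with $\chi(r)\coloneqq 2(1+2r)e^{-3ar}-(2-2r)$, and $\chi(0)=\chi(\tfrac12)=0$, $\chi'(0)=6(1-a)>0$, $\chi'(\tfrac12)=3(1-a)>0$, while $\chi''(r)=6ae^{-3ar}(3a+6ar-4)$ changes sign exactly once; this concave-then-convex structure yields exactly the sign pattern of $h'$ you describe (positive then negative on $(0,\tfrac12)$, positive on $(\tfrac12,\infty)$) and hence $h\ge 0$. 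Alternatively, the critical points of $r\mapsto\log(e^{2ar}+2e^{-ar})-ar^2$ are the fixed points of the increasing map $r\mapsto (e^{3ar}-1)/(e^{3ar}+2)$, which are easily shown to be $0$, one interior local minimum, and $\tfrac12$.
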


Figure~\ref{fig-contours} illustrates the counterexample given in Lemma~\ref{rem-nonunique}. A key observation in the proof of Lemma~\ref{rem-nonunique} is that the dual mixture $\widehat\psi_n= f_{H, \sigma^2I_2}$ can be written explicitly as a homoscedastic mixture with uniform mixing distribution $H = \frac{1}{3}\sum_{i=1}^3\delta_{X_i}$ over the observations $(X_i)_{i=1}^3$. This set-up closely follows a construction, due to Duistermaat \citep[see][]{amendola2020maximum}, exhibiting an isotropic, homoscedastic Gaussian mixture with more modes than components. Duistermaat used the same component locations $X_i$ but took $\sigma^2 = 0.53$ to obtain an example of a three-component mixture of isotropic, homoscedastic Gaussians such that the mixture has four modes. By specifically choosing $\sigma^2 = \frac{3}{\log 256} \approx 0.54$, the height of the mixture $\widehat\psi_n = f_{H, \sigma^2I_2}$ is equal at all four modes, i.e. all four modes are global maximizers, and the modes are located at $\{X_1/2, X_2/2, X_3/2, 0\}$. By Lemma~\ref{lem-characterization} any NPMLE must be supported on these modes. Representing the fitted values~$\hat{L} = (f_{\npmle, \sigma^2I_2}(X_i))_{i=1}^3$ by a probability measure~$\npmle= \sum_{j=1}^3\hat{w}_j\delta_{X_j/2} + \hat{w}_4\delta_{0}$ supported on the global modes is equivalent to finding a set of weights $\hat{w}\in \R_+^4$ such that $\sum_{j=1}^4\hat{w}_j = 1$ and $\hat{w}$ solves the under-determined linear system $\hat{L} = A\hat{w}$,
where $A$ is a $3\times 4$ matrix given by
\[
A_{ij} = \begin{cases}\varphi_{\sigma^2I_2}(X_i - X_j/2) & j\le 3 \\
\varphi_{\sigma^2I_2}(X_i) & j=4.
\end{cases}
\]
Finally, we also note that although the fitted likelihoods $f_{\npmle, \sigma^2 I_2}(X_i)$ are unique, the posterior means~$\estvec_i$ in this example differ for the solutions~$\npmle$ given in Lemma~\ref{rem-nonunique}.

\begin{figure}[ht!]
\centering
\includegraphics[width=0.5\textwidth]{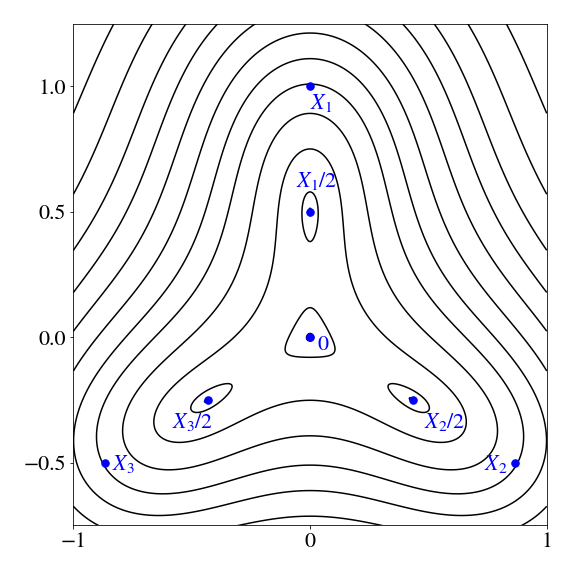}
\caption{Level sets of the dual mixture density $\widehat\psi_n = f_{H, \sigma^2I_2}$ where $n=3$ and $H = \frac{1}{3}\sum_{i=1}^3 \delta_{X_i}$ is uniform over the vertices of the larger equilateral triangle $\triangle X_1X_2X_3$. With $\sigma^2 = \frac{3}{\log 256}$, the dual mixture density~$\widehat\psi_n$ has four global modes.}\label{fig-contours} 
\end{figure}

Although the NPMLE searches over all probability measures~$\aprior\in \Ps(\R^\p)$ supported on~$\R^\p$, it is useful algorithmically to reduce the search space to probability measures supported on a compact subset of~$\R^\p$. By Lemma~\ref{lem-characterization}, to restrict the support of the NPMLE it suffices to bound the maximizers $\cz$ of the $n$-component Gaussian mixture $\widehat\psi_n$. \citet[Theorem~1]{ray2005topography} showed that all critical points of a Gaussian mixture \blue{can be constrained to a compact set known as the \emph{ridgeline manifold}. We record this in the following lemma.}

\blue{
\begin{lemma}\label{lem-ridgeline} 
The support of every solution~$\npmle$ to~\eqref{eq-NPMLE} is contained in the ridgeline manifold $\cm$, a compact subset of~$\R^\p$ defined as
\begin{equation}\label{eq-ridgeline}
\begin{aligned}
\cm 
&\coloneqq \left\{x^*(\alpha) : \alpha\in \Delta_{n-1}\right\}, \text{ where } \\ 
x^*(\alpha)&\coloneqq\left(\sum_{i=1}^n \alpha_i \Sigma_i^{-1}\right)^{-1}\sum_{i=1}^n \alpha_i \Sigma_i^{-1}X_i.
\end{aligned}
\end{equation} 
For any $\alpha\in \Delta_{n-1}$ there is an $\alpha'\in  \Delta_{n-1}$ with at most $d+1$ nonzeros such that $x^*(\alpha) = x^*(\alpha')$.
\end{lemma}
}

\blue{The ridgeline manifold~$\cm$ constrains the support of the NPMLE to a compact subset. This set is much larger than the set of critical points~$\cz$ in Lemma~\ref{lem-characterization}, but crucially its definition does not depend on the weights~$\left(\frac{\hat{L}_i^{-1}}{\sum_{\iota=1}^n\hat{L}_\iota^{-1}}\right)_{i=1}^n$ in the dual mixture~$\widehat\psi_n$.} In the univariate case~$\p=1$, the rigdeline manifold~$\cm = [X_{(1)}, X_{(n)}]$ is simply the range of the data, so the univariate NPMLE is constrained to be supported on this range. In the multivariate setting, we may further simplify $\cm$ depending on certain shape restrictions on the covariance matrices. 

\begin{prop}\label{prop-support} Depending on the values of $(\Sigma_i)$ we further bound the support of~$\npmle$ as follows:
\begin{enumerate}
\item (Homoscedastic) If $\Sigma_i= \Sigma$ for all $i$, or if $\Sigma_i = c_i\Sigma$ are proportional up to a sequence $(c_i)$ of positive scalars, the ridgeline manifold~$\cm$ is the convex hull of the data $\mathrm{conv}(\{X_1,\dots, X_n\})$. 
\item (Diagonal Covariances) If $\Sigma_i$ is a diagonal matrix for every $i$, the ridgeline manifold~$\cm$ is contained in the axis-aligned minimum bounding box of the data 
\[\prod_{j=1}^\p\left[\min_{i\in \{1, \dots, n\}} X_{ij}, \max_{i\in \{1, \dots, n\}} X_{ij}\right],\]
where $X_i = (X_{i1},\dots,X_{i\p})$ for all $i$.
\item (General Covariances) Let $\overline{k}\ge \underline{k}>0$ be chosen such that $\underline{k}I_\p\preceq\Sigma_i\preceq \overline{k}I_\p$ for all $i$, where $A\preceq B$ means $B-A$ is a symmetric positive semidefinite matrix. Choose $r > 0$ and $x_0\in \R^\p$ such that $\|X_i - x_0\|_2\le r$ for all $i$. Then the ridgeline manifold~$\cm$ is contained in the ball
\[
\mathbb{B}_{\kappa r}(x_0) \coloneqq \left\{y\in \R^\p : \|y-x_0\|_2\le \kappa r\right\}
\]
where $\kappa = \overline{k}/\underline{k}$.
\end{enumerate}
\end{prop}

The first part of Proposition~\ref{prop-support} in general gives the smallest possible convex body over which the support of $\npmle$ can be constrained independently of $\{\Sigma_i\}$. To see that the first part is tight, consider a fixed set of observations $(X_i)_{i=1}^n$ and isotropic covariance matrices $\Sigma = \sigma^2 I_\p$; as $\sigma$ is made arbitrarily small, the support of $\npmle$ approaches the set of observations~$(X_i)_{i=1}^n$ \citep{lindsay1995mixture}. Therefore, in general the convex hull is the smallest convex body containing the support in the homoscedastic setting and more generally the setting of proportional covariance matrices. By contrast, the convex hull of the data is in general too small to capture the support of~$\npmle$ in the heteroscedastic setting. Figure~\ref{fig-bounding-box} presents one example with diagonal covariances where the support of~$\npmle$ is pushed towards the corners of the minimum axis-aligned bounding box of the data. Thus, the above discussion and Figure~\ref{fig-bounding-box} indicate that both parts $(a)$ and $(b)$ of Proposition~\ref{prop-support} give the tightest possible convex support bounds in their respective special cases.

\begin{figure}[ht!]
\centering
\includegraphics[width=0.4\textwidth]{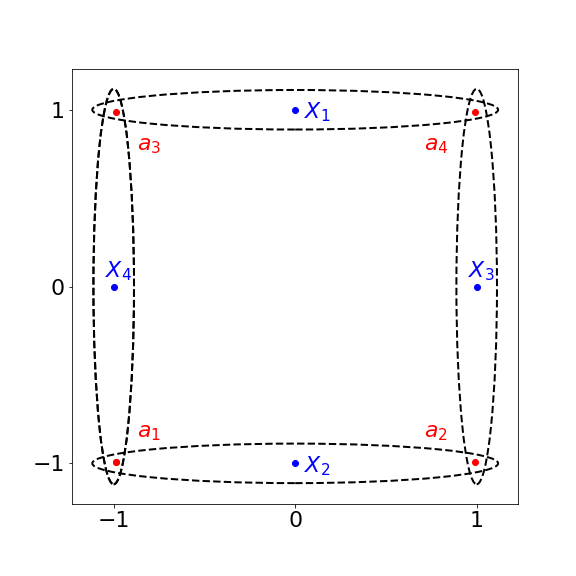}
\includegraphics[width=0.5\textwidth]{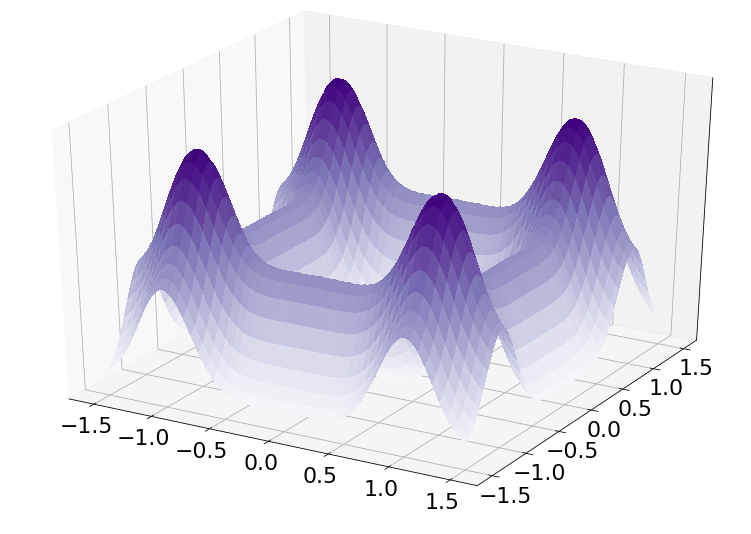}
\caption[An example of observations (blue points) with diagonal covariances (dashed ellipses) where the NPMLE is supported on atoms (red points)  well outside the convex hull of the data, and near the corners of the minimum axis-aligned bounding box. ]{Left: An example of observations (blue points) $X_1 = (0, 1)$, $X_2 = (0, -1)$, $X_3 = (1, 0)$, and $X_4  = (-1, 0)$ 
with diagonal covariances (dashed ellipses) $\Sigma_1=\Sigma_2 = \protect\begin{bmatrix}5&0\\0&.05\protect\end{bmatrix}$  and $\Sigma_3=\Sigma_4 = \protect\begin{bmatrix}.05&0\\0&5\protect\end{bmatrix},$ where the NPMLE is supported on atoms (red points) $a_1,\dots,a_4$ well outside the convex hull of the data, and near the corners of the minimum axis-aligned bounding box. Right: The mixture $\widehat\psi_n(\vartheta) = \frac{1}{4}\sum_{i=1}^4 \varphi_{\Sigma_i}(X_i - \vartheta)$ only has modes at the atoms $a_1,\dots,a_4$, so no NPMLE is supported within the convex hull of the data.}\label{fig-bounding-box} 
\end{figure}

We close this section with a brief discussion on how the NPMLE behaves under certain simple transformations of the data $(X_i, \Sigma_i)_{i=1}^n$. Given a map $T : \R^\p \to \R^\p$, let $T_\#\aprior\in \Ps(\R^\p)$ denote the pushforward of $\aprior\in\Ps(\R^\p)$ given by $T_\#\aprior(B) = \aprior(T^{-1}(B))$, for any Borel set $B\subseteq\R^\p$. In other words, if~$V\sim\aprior$, then $T_\#\aprior$ is the distribution of~$T(V)$.

\begin{lemma}\label{lem-invariance} Fix a data set $(X_i, \Sigma_i)_{i=1}^n$, a point $x_0\in\R^\p$ and a $\p\times\p$ orthogonal matrix $U_0$. Consider the transformed data set $(X_i', \Sigma_i')_{i=1}^n$ where $\Sigma_i' = U_0\Sigma_iU_0^\mathsf{T}$ and $X_i' = T(X_i)$ for $i=1,\dots, n$, with $T(x) = U_0x + x_0$. Then \[f_{T_\#\aprior, \Sigma_i'}(X_i') = f_{\aprior, \Sigma_i}(X_i)\] for all $i=1,\dots, n$ and all $\aprior\in \Ps(\R^\p)$.
\end{lemma}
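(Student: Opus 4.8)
The plan is a direct computation combining the change-of-variables formula for pushforward measures with the orthogonal invariance of the Gaussian density. First I would unfold the definition of the mixture density~\eqref{eq-mixture-density} at the transformed data, writing $f_{T_\#\aprior, \Sigma_i'}(X_i') = \int \varphi_{\Sigma_i'}(X_i' - \vartheta)\, \diff (T_\#\aprior)(\vartheta)$, and then convert this to an integral against the original measure~$\aprior$ via the identity $\int g(\vartheta)\, \diff (T_\#\aprior)(\vartheta) = \int g(T(\avec))\, \diff\aprior(\avec)$, valid for any bounded measurable~$g$. This rewrites the right-hand side as $\int \varphi_{\Sigma_i'}(X_i' - T(\avec))\, \diff\aprior(\avec)$.

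Next I would simplify the argument of~$\varphi_{\Sigma_i'}$. Since~$T$ is affine with linear part~$U_0$, we have $X_i' - T(\avec) = T(X_i) - T(\avec) = U_0(X_i - \avec)$, so the translation~$x_0$ cancels. It then remains to verify the pointwise identity $\varphi_{\Sigma_i'}(U_0 y) = \varphi_{\Sigma_i}(y)$ for all~$y\in\R^\p$. Here orthogonality of~$U_0$ enters in two places: $(\Sigma_i')^{-1} = (U_0\Sigma_i U_0^{\mathsf{T}})^{-1} = U_0\Sigma_i^{-1}U_0^{\mathsf{T}}$, so the quadratic form is preserved, $(U_0 y)^{\mathsf{T}}(\Sigma_i')^{-1}(U_0 y) = y^{\mathsf{T}}\Sigma_i^{-1}y$; and $\det(2\pi\Sigma_i') = \det(2\pi\Sigma_i)$ because $\lvert\det U_0\rvert = 1$, so the normalizing constants agree as well.

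Combining these observations, the integrand evaluated at~$\avec$ equals exactly $\varphi_{\Sigma_i}(X_i - \avec)$, and integrating against~$\aprior$ recovers $f_{\aprior, \Sigma_i}(X_i)$, establishing the claim. There is no substantive obstacle; the only step meriting a sentence of care is the change-of-variables identity for the pushforward, which follows from the definition $T_\#\aprior(B) = \aprior(T^{-1}(B))$ by the routine argument (indicator functions, then simple functions, then monotone convergence).
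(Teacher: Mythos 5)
Your proof is correct and follows essentially the same route as the paper: rewrite $f_{T_\#\aprior,\Sigma_i'}(X_i')$ via the pushforward change-of-variables formula and then use orthogonal invariance of the Gaussian density, $\varphi_{U_0\Sigma_iU_0^\mathsf{T}}(U_0 y)=\varphi_{\Sigma_i}(y)$, to reduce the integrand to $\varphi_{\Sigma_i}(X_i-\avec)$. The paper compresses the final Gaussian-invariance step into one line, whereas you spell out the quadratic form and determinant calculations, but the argument is the same.
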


Lemma~\ref{lem-invariance} is a straightforward consequence of the change of variables formula, but it has a number of useful corollaries. In particular, if $\npmle\in \Ps(\R^\p)$ is an NPMLE for the data set $(X_i, \Sigma_i)_{i=1}^n$, then $T_\#\npmle$ is an NPMLE for the modified data set $(X_i', \Sigma_i')_{i=1}^n$, and the fitted likelihood values are the same, i.e. 
\[
f_{T_\#\npmle, \Sigma_i'}(X_i') = f_{\npmle, \Sigma_i}(X_i),
\]
for all $i=1,\dots, n$. Thus, an NPMLE~$\npmle= \sum_{j=1}^{\khat} \hat{w}_j\delta_{\hat{a}_j}$ is equivariant under translations $T(y) = y+x_0$: if every observation is shifted by some fixed $x_0\in \R^\p$, then the modified NPMLE $T_\#\npmle= \sum_{j=1}^{\khat} \hat{w}_j\delta_{\hat{a}_j + x_0}$ simply shifts every atom by $x_0$. Similarly, the NPMLE is equivariant under orthogonal transformations, which explains why the fitted likelihood values are all equal in the rotationally symmetric toy data sets presented in Figure~\ref{fig-contours} and Figure~\ref{fig-bounding-box}.

\section{Computation}\label{sec-computation}

The NPMLE solves a convex optimization problem~\eqref{eq-NPMLE} that is {\sl infinite-dimensional} in the sense that the decision variable $\aprior$ ranges over all probability measures on $\R^\p$. Many numerical methods for approximately computing the NPMLE have been considered---including EM \citep{laird1978nonparametric}, vertex direction and exchange methods \citep{bohning1985numerical}, semi-infinite methods \citep{lesperance1992algorithm}, constrained-Newton methods \citep{wang2007fast}, 
and hybrid methods \citep{liu2007partially, bohning2003algorithm}---typically described for the special case of univariate and homoscedastic errors. In this section, we discuss our strategy for computing the NPMLE as well as the challenges of scaling the computation to large data sets. \blue{To make our implementation accessible to the research community, we have developed a Python package \texttt{npeb} with code to fit NPMLEs in a variety of empirical Bayes problems.}\label{code-npeb2}

We follow the approach of \cite{koenker2014convex}, who approximated the infinite-dimensional problem by constraining the support of $\aprior$ to a large finite set. For a non-empty, closed set $\ca \subseteq \R^\p$, define a support-constrained NPMLE as any solution
\begin{align}\label{eq-discretized-npmle-defn}
\npmle^{\ca}&\in \argmax_{\aprior\in \Ps(\ca)} \ell_n(\aprior),
\end{align}
where $\Ps(\ca)$ denotes the set of probability measures supported on \blue{a subset of} $\ca$. In particular, $\npmle = \npmle^{\R^\p}$ by definition, and by Proposition~\ref{prop-support} we may write $\npmle = \npmle^\cm$ for a compact subset $\cm$ defined explicitly in terms of the data.

~

\subsection{Computational guarantees for discretization}\label{sec-discretization-guarantees}

We now describe one strategy for choosing the discretization set~$\ca$. Fix~$\delta > 0$. Let~$\ch$ denote a covering of~$\cm$ by closed hypercubes of width~$\delta$, i.e.~\[\ch = \left\{x_j + [-\delta/2, \delta/2]^\p : j\in\{1,\dots, J\}\right\}\] for some set of points~$x_1, \dots,x_J\in\R^\p$ such that $\cm \subseteq\bigcup_{j=1}^J\left(x_j + [-\delta/2, \delta/2]^\p\right)$. \blue{To construct the covering $\ch$, we may take $\{x_j\}_{j=1}^J$ to be a grid of points spaced $\delta$ apart along each coordinate.} Now define the discretized support~$\ca$ to be the set of corners of hypercubes in~$\ch$; specifically, for each hypercube $x_j + [-\delta/2, \delta/2]^\p$ in $\ch$, the point~$x_j +\frac{\delta}{2}v\in\ca$ for every $v\in \{-1, 1\}^\p$. Because~$\cm$ is compact,~$\ca$ is a finite set which we denote by $\{a_j\}_{j=1}^m$. Constraining the NPMLE to this finite set of atoms $a_1,\dots,a_m$ yields a finite-dimensional convex optimization problem over the mixing proportions. That is, the solution to~\eqref{eq-discretized-npmle-defn} can be written as~$\npmle^{\ca} = \sum_{j=1}^m \tilde{w}_j\delta_{a_j}$, where 
\begin{align}\label{eq-discretized-npmle}
\tilde{w} \in \argmax_{w\in \Delta_{m-1}} \frac{1}{n}\sum_{i=1}^n\log \left(\sum_{j=1}^m L_{ij}w_j\right),
\end{align}
and $L_{ij} = \varphi_{\Sigma_i}(X_i - a_j)$ encodes an $n\times m$ kernel matrix. The EM algorithm \citep{dempster1977maximum} can be used to optimize directly over the mixing proportions~$\tilde{w}$. While this approach was advocated by~\citet{lashkari2008convex} and \citet{jiang2009general}, EM can be prohibitively slow~\citep{redner1984mixture, koenker2014convex}. A crucial observation made by \cite{koenker2014convex} is that~\eqref{eq-discretized-npmle} is a (finite-dimensional) convex optimization problem, enabling the use of a wide array of tools from modern convex optimization; they proposed solving the dual to~\eqref{eq-discretized-npmle} using an interior point solver, and \cite{gu2017rebayes} provided an R implementation to solve univariate problems. \cite{kim2020fast} proposed sequential quadratic programming to solve a variant of the primal problem directly, demonstrating superior scalability with the sample size~$n$. 

To justify the grid approximation, some consideration of the discretization error is warranted. Our next result shows that as $\delta\downarrow 0$, the log-likelihood $\ell_n(\npmle^{\ca})$ of the discretized NPMLE approaches that of the unconstrained, exact NPMLE; moreover, the bound on the gap depends on known quantities, so it can be used to guide a suitable choice of~$\delta$.

\begin{prop}\label{prop-approximation} Let $\cm\subset\R^\p$ denote any compact set such that every solution~\eqref{eq-NPMLE} is supported on~$\cm$. Suppose the diameter of the set~$\cm$ is at most~$D$, the minimum eigenvalue of each $\Sigma_i$ is at least $\underline{k}$, and fix $\delta \in \left(0, \sqrt{\frac{3}{4\p}}\underline{k}D^{-1}\right)$. Let~$\ch$ denote a cover of~$\cm$ by closed hypercubes of width~$\delta$, and let~$\ca$ denote the set of corners of hypercubes in~$\ch$. Every approximate NPMLE~$\npmle^{\ca}$ satisfies
\begin{align}\label{eq-log-likelihood-approximation}
\sup_{\aprior\in\Ps(\R^\p)}\ell_n(\aprior)
- \ell_n(\npmle^{\ca})
\le \p\underline{k}^{-2}\left(2D^2+\frac{1}{2}\right)\delta^2.
\end{align}
\end{prop}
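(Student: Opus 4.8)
The plan is to reduce the claim to constructing one good feasible competitor. Since $\npmle^{\ca}$ maximizes the objective over $\Ps(\ca)$, and by Lemma~\ref{lem-characterization} the supremum on the left of~\eqref{eq-log-likelihood-approximation} is attained — by Corollary~\ref{lem-support}, at a discrete NPMLE $\npmle = \sum_{j=1}^{\khat}\hat w_j\delta_{\hat a_j}$ with every atom $\hat a_j\in\cm$ — it suffices to exhibit a single $\aprior\in\Ps(\ca)$ with $\frac1n\sum_i\log f_{\aprior,\Sigma_i}(X_i)$ within the stated bound of $\frac1n\sum_i\log f_{\npmle,\Sigma_i}(X_i)$. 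I will work with $\cm$ equal to the ridgeline manifold~\eqref{eq-ridgeline}, which is a legitimate choice by Corollary~\ref{lem-support} and which moreover contains every data point ($X_i = x^*(\alpha)$ for $\alpha$ the $i$-th standard basis vector), so that $\|X_i - \hat a_j\|_2\le D$ for all $i,j$.

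First I would build $\aprior$ by a mean-preserving split of each atom within its covering cell. Each $\hat a_j$ lies in some width-$\delta$ hypercube $Q_j\in\ch$, and being a convex combination of the $2^\p$ corners of $Q_j$ (all in $\ca$), it is the barycenter of some $\mu_j\in\Ps(\ca)$ supported on those corners; taking $\mu_j$ to be the multilinear-interpolation weights gives in addition $\int\|c-\hat a_j\|_2^2\,d\mu_j(c)\le \p\delta^2/4$ and $\mathrm{supp}(\mu_j)$ within $\ell_2$-distance $\delta\sqrt{\p}$ of $\hat a_j$. Put $\aprior \coloneqq \sum_{j}\hat w_j\mu_j$.

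Then, for each fixed $i$, I would compare $f_{\aprior,\Sigma_i}(X_i) = \sum_j\hat w_j\int\varphi_{\Sigma_i}(X_i - c)\,d\mu_j(c)$ to $f_{\npmle,\Sigma_i}(X_i) = \sum_j\hat w_j\varphi_{\Sigma_i}(X_i-\hat a_j)$ one cell at a time. Expanding $c\mapsto\varphi_{\Sigma_i}(X_i-c)$ to second order about $\hat a_j$ with Lagrange remainder and integrating against $\mu_j$, the first-order term drops out by the barycenter property, and the remainder is controlled by the Gaussian Hessian $\nabla^2\varphi_{\Sigma_i}(y)=\varphi_{\Sigma_i}(y)(\Sigma_i^{-1}yy^{\mathsf{T}}\Sigma_i^{-1}-\Sigma_i^{-1})$ at points $y=X_i-\xi$ with $\xi$ between $\hat a_j$ and a corner of $Q_j$. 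Bounding $\|\Sigma_i^{-1}\|\le\underline k^{-1}$ and $\|X_i-\xi\|_2\le D+\delta\sqrt\p$, using the second-moment bound on $\mu_j$, and observing that the hypothesis $\delta<\sqrt{3/(4\p)}\,\underline k D^{-1}$ forces $\varphi_{\Sigma_i}(X_i-\xi)$ to stay within a fixed constant multiple of $\varphi_{\Sigma_i}(X_i-\hat a_j)$ across each cell, I get $\int\varphi_{\Sigma_i}(X_i-c)\,d\mu_j(c)\ge(1-c_i)\,\varphi_{\Sigma_i}(X_i-\hat a_j)$ for all $j$, with $c_i$ controlled by the stated expression. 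Summing over $j$ gives $f_{\aprior,\Sigma_i}(X_i)\ge(1-c_i)f_{\npmle,\Sigma_i}(X_i)$; taking logs, averaging over $i$, and using elementary bounds on $-\log(1-c_i)$ (valid in the prescribed range of $\delta$) finishes.

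The main obstacle is the uniform second-order remainder estimate in the third step, together with the constant-chasing that turns it into exactly $\p\underline k^{-2}(2D^2+\tfrac12)\delta^2$. Two features are indispensable: the split must be \emph{mean-preserving}, since a nearest-corner assignment would leave a surviving first-order term of size $\Theta(\delta)$ rather than $O(\delta^2)$ and would yield only a rate linear in $\delta$; and one needs $\delta$ small relative to $\underline k/(D\sqrt\p)$ so that a single Gaussian $\varphi_{\Sigma_i}$ does not vary by more than a constant factor over one cell of the grid — this is exactly what makes the quadratic term dominate, and what lets the bound $\|X_i-\hat a_j\|_2\le D$ (available because $\cm$ may be taken to be the ridgeline manifold, which contains the data) enter the final constant. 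The remaining estimates are routine.
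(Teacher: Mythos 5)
Your overall architecture coincides with the paper's: take an exact discrete NPMLE $\npmle=\sum_j\hat w_j\delta_{\hat a_j}$ supported on $\cm$, split each atom in a mean-preserving way onto the corners of its covering hypercube to obtain a competitor in $\Ps(\ca)$, use the first-moment matching to cancel the linear term, invoke optimality of $\npmle^{\ca}$ over $\Ps(\ca)$, and finish with $1-x\ge e^{-2x}$ for $x\le 3/4$, which is exactly where $\delta<\sqrt{3/(4\p)}\,\underline{k}D^{-1}$ enters; your remark that $\cm$ may be taken to be the ridgeline manifold, which contains the data so that $\|X_i-\hat a_j\|_2\le D$, is also what the paper's proof implicitly uses.

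The gap is in the second-order remainder step, which you yourself flag as the unresolved obstacle. A Lagrange remainder forces you to evaluate the Gaussian Hessian at intermediate points $\xi$ with $\|X_i-\xi\|_2\le D+\delta\sqrt{\p}$, so the per-cell relative error you get is of order $\p\delta^2\bigl(\underline{k}^{-2}(D+\delta\sqrt{\p})^2+\underline{k}^{-1}\bigr)$ times a bounded ratio factor. The hypothesis controls only $\underline{k}^{-1}\delta\sqrt{\p}\,D\le\sqrt{3}/2$; it does \emph{not} control $\underline{k}^{-1}\p\delta^2$, which can be arbitrarily large within the admissible range (take $D^2\ll\underline{k}$ and $\delta$ near its cap). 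In that regime your $c_i$ exceeds $1$, the multiplicative bound $f_{\aprior,\Sigma_i}(X_i)\ge(1-c_i)f_{\npmle,\Sigma_i}(X_i)$ becomes vacuous, and the chain delivers nothing, while the proposition still asserts a finite (if weak) bound; even away from that regime the Hessian-sup route does not produce the constant $\p\underline{k}^{-2}(2D^2+\tfrac12)\delta^2$. The paper avoids this by never leaving the atom: in whitened coordinates it uses the exact identity $\varphi_{\Sigma_i}(X_i-u)=\varphi_{\Sigma_i}(X_i-\hat a_j)\exp\bigl(\langle x_j,t\rangle-\tfrac12\|t\|_2^2\bigr)$ with $x_j=\Sigma_i^{-1/2}(X_i-\hat a_j)$ and $t=\Sigma_i^{-1/2}(u-\hat a_j)$ (this is the appeal to inequality (A.27) of \citet{jiang2009general}), so that after moment matching the quadratic term $\langle x_j,t\rangle^2\le\underline{k}^{-2}D^2\p\delta^2\le 3/4$ multiplies the NPMLE's own likelihood and is absorbed by $1-x\ge e^{-2x}$, while the factor $e^{\|t\|_2^2/2}-1$ multiplies the \emph{competitor's} likelihood and hence enters the per-observation log-gap only additively, as at most $\tfrac12\underline{k}^{-1}\p\delta^2$; summing the two contributions gives the stated bound for every admissible $\delta$. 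Replacing your Taylor-with-intermediate-point estimate by this exact exponential split repairs the argument; the rest of your proposal then matches the paper's proof.
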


We prove Proposition~\ref{prop-approximation} in Section~\ref{sec-proofs-characterization} of the Supplementary Material. Proposition~\ref{prop-approximation} shows that we can tractably approximate the NPMLE via a finite-dimensional, convex optimization problem. As we show in Section~\ref{sec-stats}, our theoretical results on the statistical properties of the NPMLE hold for any approximate solution~$\npmle^\ca$ which places nearly as high likelihood on the observations as the \blue{true prior~$\trueprior$. Proposition~\ref{prop-approximation} can be used to guarantee this, since~\eqref{eq-log-likelihood-approximation} yields a bound on~$\ell_n(\trueprior) - \ell_n(\npmle^\ca)$ without needing to know~$\trueprior$}. Hence for $\delta$ sufficiently small we can guarantee that the discretization error is negligible.

\cite{dicker2016high} showed in the univariate, homoscedastic case that a finely discretized NPMLE is statistically indistinguishable from the NPMLE for the purpose of density estimation. However, their analysis of the discretization error makes use of the modeling assumptions~\eqref{eq-obs-model} and is statistical in nature, so their theoretical results provide little guidance on how much error is incurred due to discretization for a fixed data set. Our result aligns more closely with and in fact essentially generalizes \citet[][Proposition~5]{jiang2009general}, which bounded the optimality gap for a particular algorithm, discretization scheme and fixed data set. The main difference between our result and \citet[][Proposition~5]{jiang2009general} is that the latter analyzed the EM algorithm for the mixing proportions~\eqref{eq-discretized-npmle}, whereas by using a black-box, second-order optimization method to solve for the mixing proportions~$\tilde{w}$, we can solve for the discretized NPMLE~$\npmle^\ca$ much more accurately.

\blue{\subsection{Exemplar+ method for moderate dimensions}\label{sec-exemplar}

The gridding discretization method does not scale well to moderate dimensions. In the approximation guarantee in Proposition~\ref{prop-approximation}, the number of hypercubes of width~$\delta$ typically scales like~$\delta^{-\p}$. An alternative---known as the \emph{exemplar NPMLE}---is to select the data points as the atoms $\ca = \{X_1,\ldots,X_n\}$ \citep{bohning1999computer, lashkari2008convex}. The exemplar NPMLE is the support constrained NPMLE~$\npmle^\ca$ using the data as the atoms. 

\begin{figure}
    \centering
    \includegraphics[width=.32\textwidth]{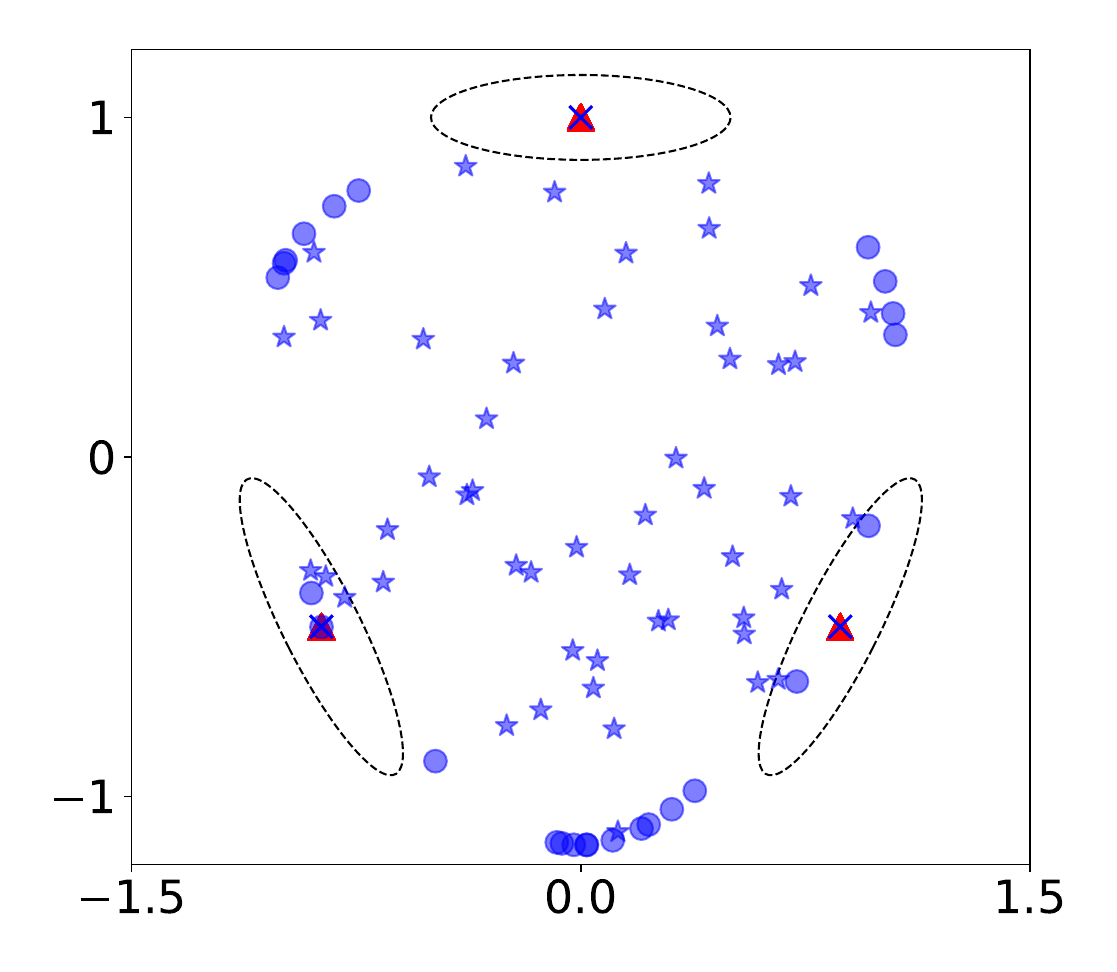}
    \includegraphics[width=.32\textwidth]{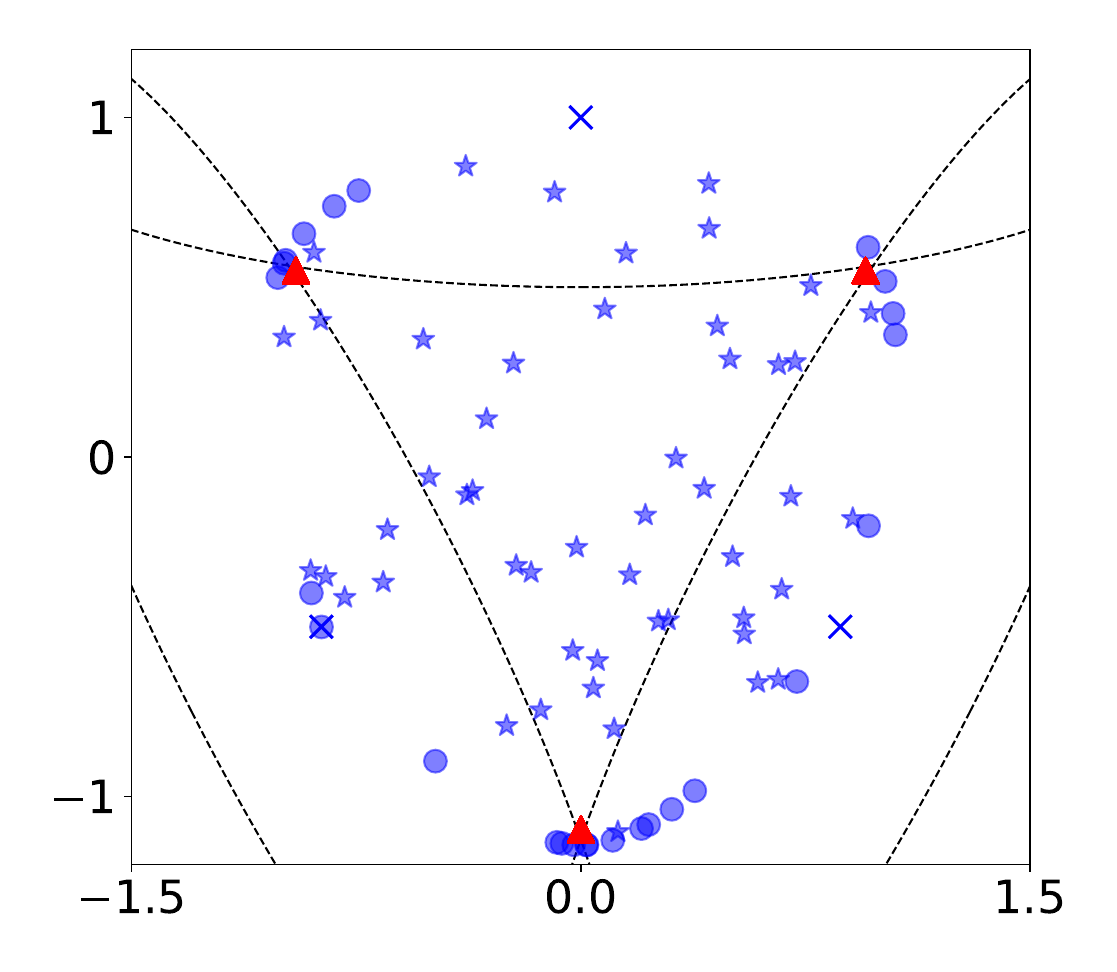}
    \includegraphics[width=.32\textwidth]{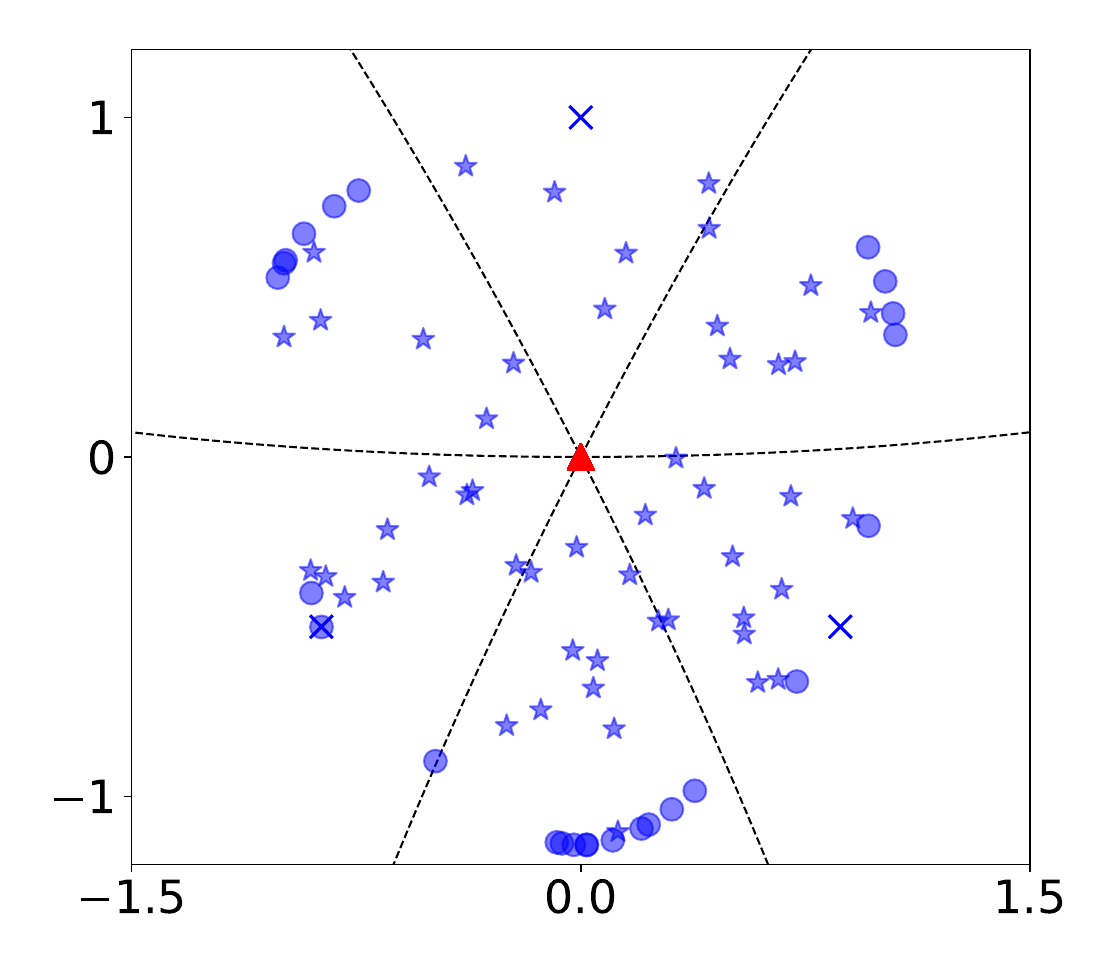}
    \includegraphics[width=.3\textwidth]{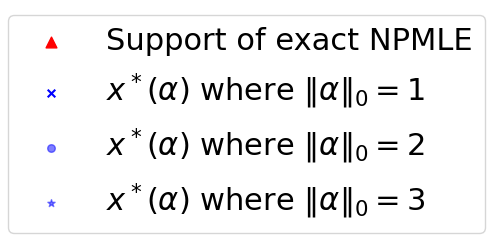}
    
    \caption{\blue{Left: An example of observations (blue \texttt{x}'s) $X_1 = (0, 1)$, $X_2 = (\frac{\sqrt{3}}{2}, -\frac{1}{2})$, and $X_3 = (-\frac{\sqrt{3}}{2}, -\frac{1}{2})$
with covariances (dashed ellipses) $\Sigma_1=\protect\begin{bmatrix}4&0\\0&1\protect\end{bmatrix}$, $\Sigma_2=\protect\begin{bmatrix}1.75&\frac{3\sqrt{3}}{4}\\\frac{3\sqrt{3}}{4}&3.25\protect\end{bmatrix}$  and $\Sigma_3=\protect\begin{bmatrix}1.75&-\frac{3\sqrt{3}}{4}\\-\frac{3\sqrt{3}}{4}&3.25\protect\end{bmatrix}$, where the NPMLE is supported on atoms (red $\blacktriangle$'s). Blue $\bullet$'s represent atoms~$x^*(\alpha)$ sampled from the ridgeline manifold~$\cm$ with~$\alpha$ being $2$-sparse, and blue $\star$'s represent the same with~$\alpha$ being $3$-sparse. Middle: same plot but each $\Sigma_i$ is scaled by~$4$. Right: same plot but each~$\Sigma_i$ is scaled by~$8$.}}
    \label{fig:exemplar-types}
\end{figure}

One issue with the exemplar approach is that, in general, the observations need not be close to the atoms of the NPMLE. For example, consider the heteroscedastic problem shown in Figure~\ref{fig-bounding-box}. Since all of the covariance matrices are diagonal, we know from Proposition~\ref{prop-support} that the support of the NPMLE is contained in the axis-aligned minimum bounding box of the data. In this case, the atoms of the NPMLE are near the corners of the box, far from the observations. More generally, it is helpful to view the exemplar NPMLE of~\citet{lashkari2008convex} as selecting atoms in the ridgeline manifold~$\cm$ with a special structure. Specifically, the exemplar method selects~$a_j = x^*(e_j)$, $j=1,\ldots,n$, where~$e_j\in \R^n_+$ is a standard basis vector. 

We introduce a new strategy for selecting atoms~$\ca$, called the \textit{exemplar+ NPMLE}, which aims to get a more representative sampling of points in the ridgeline manifold~$\cm$. By Proposition~\ref{prop-support}, all possible atoms for $\npmle$ can be generated by vectors of the form $x^*(\alpha)$ where $\alpha\in \Delta_{n-1}$ is a probability vector that is $(\p+1)$-sparse. The original exemplar method considers only~$\alpha$ that are~$1$-sparse. We propose to randomly sample, for each~$j\in \{1,\ldots,\p+1\}$, $N_j\ge 0$ weight vectors~$\alpha$ that are~$j$-sparse. See Algorithm~\ref{alg:exemplar-plus} for a precise description of our proposal.

\begin{algorithm}
\caption{Exemplar+ NPMLE}\label{alg:exemplar-plus}
\begin{algorithmic}
\blue{\Require $N_1,\ldots,N_{\p+1} \geq 0$
\State $\ca \gets \emptyset$
\For{$l = 1,\ldots,\p+1$}
    \For{$k=1,\ldots,N_l$}
    
    \State Sample a subset $S\subseteq [n]$ of size $l$, uniformly at random \Comment{(without replacement if $l = 1$)}
    \State Sample weights $\alpha\in \Delta_{n-1}$ such that $\alpha_i=0$ for $i\not\in S$, uniformly at random
    \State $\ca \gets \ca\cup\{x^*(\alpha)\}$ (see~\eqref{eq-ridgeline})
    \EndFor
    \EndFor}

\State \blue{Enumerate $\ca = \{a_1,\ldots,a_m\}$}
\State \blue{Compute $n\times m$ kernel matrix $L_{ij}\gets \varphi_{\Sigma_i}(X_i - a_j)$}
\State \blue{Solve~\eqref{eq-discretized-npmle} for weights~$\tilde{w}\in \Delta_{m-1}$}

\noindent\blue{\Return Approximate NPMLE $\npmle^\ca = \sum_{j=1}^m \tilde{w}_j\delta_{a_j}$}
\end{algorithmic}
\end{algorithm}

}

\blue{Algorithm~\ref{alg:exemplar-plus} is motivated by the observation that different sparsity levels~$\|\alpha\|_0 = j$ for $j\in \{1,\ldots,\p+1\}$ will give better approximations to the support of the NPMLE depending on the scale of the covariance matrices $(\Sigma_i)$. Figure~\ref{fig:exemplar-types} illustrates this point. When the covariances~$\Sigma_i$ are sufficiently small (relative to the distance between observations), the support of the NPMLE will be nearly the observations: this is where the original exemplar method will be most accurate. However, as the scales of the covariance matrices~$(\Sigma_i)$ grow, the atoms of the NPMLE occur at~$x^*(\alpha)$ where~$\|\alpha\|_0 > 1$.}

\blue{The toy examples in Figure~\ref{fig:exemplar-types} suggest that we should not set any of the~$N_j$'s equal to~$0$. We propose to set the counts equal to $N_1 = n$ and $N_2 = \cdots = N_{\p+1} = \lceil\frac{n}{\p}\rceil$. This ensures both that the exemplar+ always produces a higher likelihood than the original exemplar method, and that the computational cost is not much higher since we only require twice the number of atoms, regardless of the dimension~$\p$.}

\blue{As we shall see in Section~\ref{sec-stats}, for our statistical guarantees, the main condition on an approximate NPMLE~$\npmle$ is that its likelihood $\ell_n(\npmle)$ is larger than the likelihood $\ell_n(\trueprior)$ of the true prior. In Appendix~\ref{sec-additional-experiments} of the Supplementary Material, we show in simulations that exemplar+ consistently satisfies this requirement across a variety of choices of $\trueprior$. In fact, the gap between the two likelihoods grows considerably in moderate dimensions~(in our experiment we tested up to $\p=20$). This suggests that the exemplar+ method is especially well-suited for applications in moderate to high dimensions.}

\section{Statistical properties}\label{sec-stats}

The NPMLE~$\npmle$ applies as a plug-in estimator of the prior distribution~$\trueprior$ for many purposes. The traditional statistical setting is density estimation, where working in a Gaussian mixture model greatly simplifies the problem of estimating the marginal density of each observation~$X_i$. In particular, $f_{\npmle, \Sigma_i}$ is a natural, tuning-free estimate of the true marginal density $f_{\trueprior, \Sigma_i}$. Another problem setting---at the heart of empirical Bayes methodology---is to imitate the Bayesian inference we would conduct if we knew~$\trueprior$. Denoising, using~$(\estvec_i)_{i=1}^n$ as plug-in estimators of the true posterior means~$(\orvec_i)_{i=1}^n$, represents the most basic instantiation. Finally, often we wish to compare~$\npmle$ to the prior~$\trueprior$ directly. Since we are estimating the prior given observations from a convolution model~$X_i\simind f_{\trueprior, \Sigma_i}$, deconvolution refers to the problem of estimating~$\trueprior$.

In this section, we establish that the NPMLE is well-suited for all three disparate targets of estimation: the marginal densities~$(f_{\trueprior, \Sigma_i})_{i=1}^n$, the oracle posterior means~$(\orvec_i)_{i=1}^n$ and the prior~$\trueprior$. We allow for the possibility that~$\npmle$ is any approximate NPMLE, with the exact conditions being given in each theorem; \blue{hence, potential non-uniqueness (established in Lemma~\ref{rem-nonunique}) is not an issue in practice.}\label{remark-nonuniqueness-does-not-matter} Throughout this section, we use the standard notation~$x\lesssim_{p, q} y$ to mean $x\le C_{p, q}y$ for some positive constant $C_{p,q} > 0$ depending only on problem parameters~$p,q$.

\subsection{Density estimation: average Hellinger accuracy}\label{sec-density-estimation}

As the distribution of $X_i$ varies with $i$, we consider the density estimation quality of the NPMLE~\eqref{eq-NPMLE} in terms of the average squared Hellinger distance, i.e. for $G, H\in \Ps(\R^\p)$, 
\[
\bar{h}^2(f_{G, \bullet}, f_{H, \bullet}) 
\coloneqq \frac{1}{n}\sum_{i=1}^n h^2(f_{G,\Sigma_i}, f_{H, \Sigma_i}),
\]
where $h^2(f,g)= \frac{1}{2}\int\left(\sqrt{f}-\sqrt{g}\right)^2$ denotes the usual squared Hellinger distance between a pair of densities $f, g$. In the homoscedastic case where $\Sigma_i\equiv\Sigma$, our proposed loss function~$\bar{h}^2(f_{G, \bullet}, f_{H, \bullet}) = h^2(f_{G, \Sigma}, f_{H, \Sigma})$ agrees with the usual squared Hellinger distance. Our first result bounds the average squared Hellinger accuracy $\bar{h}^2(f_{\npmle, \bullet}, f_{\trueprior, \bullet})$ of the NPMLE. In order to accommodate general heteroscedastic $\Sigma_i$, we state our results in terms of uniform upper and lower bounds on the spectra of all of the matrices, i.e. $\underline{k}I_\p\preceq \Sigma_i \preceq \overline{k}I_\p$ for all $i$. To state the result, some additional notation is needed. We fix a positive scalar $M\ge \sqrt{10\overline{k}\log n}$ and a non-empty compact set $S\subset \R^\p$. Define the rate function controlling the squared Hellinger distance
\begin{align}\label{eq-eps}
\eps_n^2(M, S, \trueprior)
\coloneqq \mathrm{Vol}(S^{\overline{k}^{1/2}})\frac{M^\p}{n}(\log n)^{\p/2+1} + \inf_{q\in [(\p+1)/(2\log n),\infty)}\left(\frac{2\mu_q}{M}\right)^q\log n,
\end{align}
where $\mu_q$ denotes the $q^\mathrm{th}$-moment of $\mathfrak{d}_S(\vartheta)\coloneqq \inf_{s\in S}\|\vartheta - s\|_2$ under $\vartheta\sim \trueprior$, and $S^a \coloneqq \{y :  \mathfrak{d}_S(y)\le a\}$ denotes the $a$-enlargement of the set $S$. Note that we have suppressed the dependence of $\eps_n^2$ on the upper bound~$\overline{k}$.

The following result states that~$\eps_n^2(M, S, \trueprior)$ bounds the rate in average Hellinger accuracy both with high probability and in expectation. The scalar~$M\ge \sqrt{10\overline{k}\log n}$ and compact set~$S\ne \emptyset$ are free parameters. Note that the first term on the right-hand side of~\eqref{eq-eps} is increasing in $M$ and $S$, whereas the second is decreasing in each. In principle, then, we may tune the values of $M$ and $S$ to optimize the rate function~$\eps_n^2(M, S, \trueprior)$. Later in this section, we discuss a number of special cases where a more explicit rate can be obtained.

\begin{theorem}\label{thm-density-estimation}
Suppose $X_i\simind f_{\trueprior, \Sigma_i}$ where $\underline{k}I_\p\preceq\Sigma_i\preceq \overline{k}I_\p$ for all $i$. Any (approximate) solution $\npmle\in\Ps(\R^\p)$ of \eqref{eq-NPMLE} \blue{such that}
\begin{align}\label{eq-hellinger-accuracy-approximate-npmle}
\blue{\ell_n(\trueprior) - \ell_n(\npmle) \lesssim_{\p,\overline{k},\underline{k}} \eps_n^2(M, S, \trueprior)}
\end{align}
satisfies
\begin{align}\label{eq-hellinger-accuracy-whp}
\P\bigg(\bar{h}^2(f_{\npmle, \bullet}, f_{\trueprior, \bullet})  \gtrsim_{\p, \overline{k}, \underline{k}}t^2\eps_n^2(M, S, \trueprior)\bigg) \le 2n^{-t^2},
\end{align}
for all $t\ge 1$, provided $n > \max(e\underline{k}^{-\p/2}, (2\pi)^{\p/2})$. Moreover, 
\begin{align}\label{eq-hellinger-accuracy-expectation}
\E\bigg[\bar{h}^2(f_{\npmle, \bullet}, f_{\trueprior, \bullet})\bigg] \lesssim_{\p, \overline{k}, \underline{k}}\eps_n^2(M, S, \trueprior). 
\end{align}
\end{theorem}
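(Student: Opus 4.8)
The plan is to follow the classical Wong--Shen / van de Geer approach to analyzing maximum likelihood estimators, adapted to the Hellinger metric on the heteroscedastic mixture class, with a truncation argument to handle the unbounded support of $\trueprior$. The key point is that the left-hand side of \eqref{eq-hellinger-accuracy-approximate-npmle} controls a certain empirical process, and once we control the bracketing (or metric) entropy of the relevant class of mixture densities, a basic inequality chains these together into a rate. The free parameters $M$ and $S$ enter through a single truncated class: instead of approximating arbitrary priors in $\Ps(\R^\p)$, we only need to approximate well those priors with support essentially inside the enlargement $S^{\overline{k}^{1/2}}$, and the moment tail of $\frak{d}_S$ under $\trueprior$ controls how much mass must be moved to the boundary of a ball of radius $\sim M$; this is where the second term in \eqref{eq-eps} comes from.

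The steps, in order. First, I would record a ``basic inequality'': by the approximate-optimality hypothesis \eqref{eq-hellinger-accuracy-approximate-npmle}, the log-likelihood ratio $\frac{1}{n}\sum_i \log\frac{f_{\npmle,\Sigma_i}(X_i)}{f_{\trueprior,\Sigma_i}(X_i)}$ is bounded below by $-C\eps_n^2$. Rewriting via $\frac{1}{2}\log\frac{f_{\npmle}}{f_{\trueprior}} = \log\sqrt{f_{\npmle}/f_{\trueprior}}$ and using the pointwise bound $\log u \le 2(\sqrt u - 1)$, this produces a one-sided control of $\frac{1}{n}\sum_i\big(\sqrt{f_{\npmle,\Sigma_i}(X_i)/f_{\trueprior,\Sigma_i}(X_i)} - 1\big)$ from below. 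Second, I would split this empirical average into its mean (which, by a standard computation, equals $-\bar h^2(f_{\npmle,\bullet}, f_{\trueprior,\bullet})$ up to sign conventions, using that $\E_{X_i\sim f_{\trueprior,\Sigma_i}}[\sqrt{f_{\npmle,\Sigma_i}/f_{\trueprior,\Sigma_i}}] = 1 - h^2(f_{\npmle,\Sigma_i}, f_{\trueprior,\Sigma_i})$) plus a centered empirical process term. Rearranging yields $\bar h^2 \lesssim \eps_n^2 + (\text{empirical process evaluated at }\npmle)$. Third, I would bound the empirical process uniformly over the class $\cf = \{(x\mapsto \sqrt{f_{G,\Sigma_i}(x)/f_{\trueprior,\Sigma_i}(x)})_{i} : G\in\Ps(\R^\p)\}$ by a peeling/chaining argument keyed to $\bar h$: the supremum of the centered process over the slice $\{\bar h(f_G, f_{\trueprior}) \le r\}$ is, with probability $\ge 1 - 2n^{-t^2}$, of order $t\cdot(\text{entropy integral at scale }r) / \sqrt n$ plus a lower-order term, which is the standard localized-maximal-inequality bound (Bernstein for independent but non-identically distributed summands). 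Fourth, the entropy input: I would bound the $\bar h$-metric entropy (or the relevant $L^2$-bracketing entropy after the truncation) of the class of $n$-tuples $(f_{G,\Sigma_i})_i$ for $G$ supported in a ball of radius $\sim M$, using the fact that Gaussian location mixtures over a bounded set admit finite-dimensional approximation; the dimension count $\sim M^\p(\log n)^{\p/2}$ and the volume factor $\mathrm{Vol}(S^{\overline k^{1/2}})$ arise precisely here, from covering the support region at a resolution fine enough ($\sim 1/\sqrt{\log n}$, after rescaling by $\underline k^{-1/2}$) that Hellinger distances between mixtures collapse to Wasserstein-type distances between mixing measures. Fifth, for the tail: since $\trueprior$ need not be compactly supported, I would approximate $\trueprior$ itself by $\trueprior$ conditioned to the ball $\mathbb B_M$, incurring a Hellinger error controlled by $\trueprior(\frak{d}_S > cM)^{1/2}$ and hence by Markov by $(\mu_q/M)^{q/2}$; squaring and optimizing over $q$ gives the second term of $\eps_n^2$. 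Finally, solving the resulting quadratic-type inequality $\bar h^2 \lesssim \eps_n^2 + t\eps_n\bar h/\text{(something)} + \dots$ for $\bar h^2$ yields \eqref{eq-hellinger-accuracy-whp}, and integrating the tail bound (using $\int_1^\infty 2n^{-t^2}\,2t\,dt \lesssim 1/\log n$, absorbed into constants) gives the in-expectation bound \eqref{eq-hellinger-accuracy-expectation}.

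The main obstacle I expect is the entropy estimate for the \emph{heteroscedastic} class, i.e.\ controlling $N(\varepsilon, \cf, \bar h)$ uniformly over the covariance profile subject only to $\underline k I_\p \preceq \Sigma_i \preceq \overline k I_\p$. In the homoscedastic, univariate case this is classical (Gaussian mixtures have logarithmically small bracketing entropy), but here one must produce a \emph{single} finite-dimensional sieve --- a fixed finite set of candidate atom locations and a discretization of weights --- whose induced mixtures $\varepsilon$-cover $\{(f_{G,\Sigma_i})_i : G\}$ simultaneously in all $n$ coordinates. The clean way to do this is to show that for $G$ supported in a ball of radius $R$, $h(f_{G,\Sigma}, f_{H,\Sigma}) \lesssim_{\underline k, R} $ (some transport distance between $G$ and $H$) with constants uniform over $\underline k I \preceq \Sigma \preceq \overline k I$ --- essentially a Lipschitz-in-mixing-measure bound for the Gaussian convolution, which does hold because the Gaussian density and its derivatives are bounded in terms of $\underline k$ alone on the relevant region. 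Granting that, the covering number of $\{(f_{G,\Sigma_i})_i\}$ reduces to the covering number of the set of mixing measures on a ball, which is where the $M^\p(\log n)^{\p/2}$ effective-dimension and the $(\log n)^{\p/2+1}/n$ rate come out after balancing. The truncation bookkeeping (keeping track of how the ball radius $M$, the set $S$, and the resolution interact, and ensuring the approximating measure still lies in $\Ps(\R^\p)$ so that the basic inequality applies) is the other place where care is needed, but it is routine once the entropy bound is in hand.
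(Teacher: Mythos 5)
Your overall architecture (basic inequality from approximate optimality, centering $\sqrt{f_{\npmle}/f_{\trueprior}}$, localized empirical process plus entropy) is a legitimate route in principle, and it is close in spirit to the Wong--Shen/van de Geer machinery that the paper explicitly deviates from. But there are two concrete gaps at exactly the points that carry the theorem. First, your entropy mechanism cannot produce the near-parametric rate. You propose to reduce the Hellinger covering of $\{(f_{G,\Sigma_i})_i : \mathrm{supp}(G)\subset \mathbb{B}_M\}$ to a covering of the mixing measures in a transport distance via a Lipschitz bound $h(f_{G,\Sigma},f_{H,\Sigma})\lesssim W(G,H)$. A $W$-cover of $\Ps(\mathbb{B}_M)$ at the target scale $\eps_n\asymp\sqrt{(\log n)^{\p+1}/n}$ requires atom grids of spacing $\asymp\eps_n$, so the log-covering number is polynomial in $n$ (of order $(M/\eps_n)^\p$ up to logs), and balancing $n\eps^2\asymp H(\eps)$ then yields a genuinely nonparametric rate like $n^{-2/(2+\p)}$, not $(\log n)^{\p/2+1}\mathrm{Vol}(S^{\overline{k}^{1/2}})M^\p/n$. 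The polylogarithmic entropy that the theorem needs comes from the analyticity of the Gaussian kernel: a mixing measure on a bounded region can be replaced by a discrete one with only polylog-many atoms per unit volume whose low-order moments match, giving sup-norm/Hellinger error $n^{-c}$ (this is the content of the paper's moment-matching lemmas and its bound $\log N(\eta,\F,\|\cdot\|_{\infty,S^M})\lesssim_{\p} N(a,S^a)(\log(c/\eta))^{\p+1}$). Your "resolution $\sim 1/\sqrt{\log n}$" dimension count is the right answer but is not delivered by the Lipschitz-in-transport argument you invoke; without the moment-matching step the proof does not close.

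Second, restricting the class to priors supported in a ball of radius $\sim M$ is not justified for $\npmle$: its support is data-dependent (contained only in the ridgeline manifold of the sample), and under the heavy-tailed cases the theorem covers, atoms of $\npmle$ can sit far outside any fixed ball; replacing $\npmle$ by its conditioned version changes the likelihood at the outlying observations by an uncontrolled amount, so the basic inequality no longer applies to the truncated estimator, and controlling $\npmle(\mathbb{B}_M^{\mathsf c})$ is itself a nontrivial task you have not addressed. The paper sidesteps this by covering the \emph{unrestricted} class $\F$ in the sup-norm over the window $S^M$ (where far-away atoms contribute essentially nothing, so the entropy stays finite), bounding $f_{\npmle,\Sigma_i}$ crudely outside $S^M$, and absorbing the observations that fall outside $S^M$ through a separate Markov argument with exponent $\asymp 1/\log n$ driven by moments of $\frak{d}_S(X_i)$; this is where the second term of $\eps_n^2$ actually arises, rather than from truncating $\trueprior$ as you suggest. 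A lesser but real technical point: $\sqrt{f_{G,\Sigma_i}/f_{\trueprior,\Sigma_i}}$ is unbounded and its variance is of $\chi^2$ type, so your Bernstein/peeling step needs the standard convexity modification (working with $(f_G+f_{\trueprior})/2$) or, as in the paper, a direct Markov bound on the half-power likelihood ratio over the net; as written, the maximal inequality you appeal to does not apply to this class.
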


We prove Theorem~\ref{thm-density-estimation} in Section~\ref{sec-proofs-density-estimation} of the Supplementary Material. Our proof extends Theorem~2.1 of \citet{saha2020nonparametric} on the multivariate, homoscedastic case~$\Sigma_i\equiv I_\p$ and Theorem~4 of \citet{jiang2020general} on the univariate, heteroscedastic case~$\p=1$, which in turn build upon Theorem~1 of~\citet{zhang2009generalized} on the univariate, homoscedastic case. The general theory on rates of convergence for maximum likelihood estimators~\citep{wong1995probability, geer2000empirical} can in principle be used to bound $\bar{h}^2(f_{\npmle, \bullet}, f_{\trueprior, \bullet})$. Our proof technique deviates from the general theory by directly bounding the likelihood~$f_{\npmle, \Sigma_i}(x)$ for $x$ outside some pre-specified domain (controlled by the choice of set~$S$), and then covering the set of densities $\{f_{\aprior, \bullet} : \aprior\in \Ps(\R^\p) \}$ within the domain in the~$L_\infty$ metric. 

Theorem~\ref{thm-density-estimation} provides a sharp bound in many special cases of~$\trueprior$. For a given~$\trueprior$ we need to optimize over the choices of $M \ge  \sqrt{10\overline{k}\log n}$ and the nonempty compact set $S \subset \R^\p$ to obtain the smallest value of the rate function~$\eps_n^2(M, S, \trueprior)$. Our next result performs this calculation for various assumptions on the prior~$\trueprior$.

\begin{corollary}\label{cor-density-estimation} Suppose $X_i\simind f_{\trueprior, \Sigma_i}$ where $\underline{k}I_\p\preceq\Sigma_i\preceq \overline{k}I_\p$ for all $i$. Suppose $\npmle \in \Ps(\R^\p)$ is any approximate NPMLE such that 
\begin{align}\label{eq-approximate-npmle}
\blue{\ell_n(\trueprior) - \ell_n(\npmle)}
\lesssim_{\p, \overline{k}, \underline{k}} \frac{(\log n)^{\p+1}}{n}.
\end{align}
\blue{Let $\mathbb{B}_r(x) \coloneqq \{y : \|x-y\|_2\le r\}$ denote the $\p$-dimensional closed ball of radius $r$ centered at $x$.}
\begin{enumerate}
\item (Discrete support) If $\trueprior = \sum_{j=1}^{k^*} w^*_j\delta_{a^*_j}$, then 
\[
\E \bar{h}^2(f_{\npmle, \bullet}, f_{\trueprior, \bullet})
\lesssim_{\p, \overline{k}, \underline{k}}  \frac{k^*}{n} (\log n)^{\p+1}.
\]
\item  (Compact support) If $\trueprior$ has compact support $S^*$, then 
\[
\E \bar{h}^2(f_{\npmle, \bullet}, f_{\trueprior, \bullet})
\lesssim_{\p,\overline{k}, \underline{k}} \frac{\mathrm{Vol}\left(S^* + \mathbb{B}_{\overline{k}^{1/2}}(0)\right)}{n} (\log n)^{\p+1}.
\]

\item (Simultaneous moment control) Suppose that there is a compact $S^*\subset\R^\p$ and $\alpha\in (0, 2]$, $K\ge 1$ such that $\mu_q\coloneqq \E_{\vartheta\sim \trueprior}\left[\mathfrak{d}^q(\vartheta, S)\right]^{1/q}\le Kq^{1/\alpha}$ for all $q\ge 1$ (recall $\mathfrak{d}_S(\vartheta)\coloneqq \inf_{s\in S}\|\vartheta - s\|_2$ as above). Then 
\[
\E \bar{h}^2(f_{\npmle, \bullet}, f_{\trueprior, \bullet})
\lesssim_{\alpha, K, \p, \overline{k}, \underline{k}}  \frac{\mathrm{Vol}\left(S^* + \mathbb{B}_{\overline{k}^{1/2}}(0)\right)}{n} (\log n)^{\frac{2+\alpha}{2\alpha}\p + 1}.
\]
\item (Finite $q^\mathrm{th}$ moment) Suppose that there is a compact $S^*\subset\R^\p$ and $\mu, q > 0$ such that $\mu_q\le \mu$. Then 
\[
\E \bar{h}^2(f_{\npmle, \bullet}, f_{\trueprior, \bullet})
\lesssim_{\mu, q, \p,\overline{k}, \underline{k}}  \left(\frac{\mathrm{Vol}\left(S^* + \mathbb{B}_{\overline{k}^{1/2}}(0)\right)}{n}\right)^{\frac{q}{q+\p}} (\log n)^{\frac{q}{2q+2\p}\p+1}.
\]
\end{enumerate}
\end{corollary}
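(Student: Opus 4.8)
The plan is to obtain all four parts directly from the expectation bound~\eqref{eq-hellinger-accuracy-expectation} of Theorem~\ref{thm-density-estimation}, by choosing the two free parameters $S$ (a nonempty compact set) and $M\ge\sqrt{10\overline{k}\log n}$ so as to make the rate function $\eps_n^2(M,S,\trueprior)$ as small as the structural assumption on $\trueprior$ permits. Two routine preliminaries are used throughout. First, for each proposed pair $(M,S)$ and $n$ large one has $\eps_n^2(M,S,\trueprior)\gtrsim_{\p,\overline{k}}(\log n)^{\p+1}/n$ (immediate, since $\mathrm{Vol}(S^{\overline{k}^{1/2}})$ is bounded below by the volume of one ball of radius $\overline{k}^{1/2}$, and the first term of $\eps_n^2$ is at least $\mathrm{Vol}(S^{\overline{k}^{1/2}})M^\p(\log n)^{\p/2+1}/n$ with $M^\p\ge(10\overline{k}\log n)^{\p/2}$), so the hypothesis~\eqref{eq-approximate-npmle} is strong enough to trigger hypothesis~\eqref{eq-hellinger-accuracy-approximate-npmle} of the theorem. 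Second, one records the elementary volume estimates: $\mathrm{Vol}(S^{\overline{k}^{1/2}})\le k^*$ times the volume of a $\p$-ball of radius $\overline{k}^{1/2}$ (hence $\lesssim_{\p,\overline{k}}k^*$) when $S$ is a $k^*$-point set, and $\mathrm{Vol}(S^{\overline{k}^{1/2}}) = \mathrm{Vol}(S^*+\mathbb{B}_{\overline{k}^{1/2}}(0))$ when $S = S^*$.

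For parts (i) and (ii) I would take $S$ equal to the support of $\trueprior$ — the $k^*$-point atom set in (i), the compact set $S^*$ in (ii). Then the distance from a $\trueprior$-draw to $S$ is zero almost surely, so every moment $\mu_q$ vanishes and the second term of $\eps_n^2$ disappears. Choosing $M = \sqrt{10\overline{k}\log n}$, the smallest admissible value, gives $M^\p\asymp_{\p,\overline{k}}(\log n)^{\p/2}$, so the first term becomes $\lesssim_{\p,\overline{k}}\mathrm{Vol}(S^{\overline{k}^{1/2}})(\log n)^{\p+1}/n$; substituting the two volume estimates above yields exactly the asserted bounds.

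Parts (iii) and (iv) require actually balancing the two terms of $\eps_n^2$. In (iii), with $S = S^*$, I would set $M = \max\{2eK(\log n)^{1/\alpha},\sqrt{10\overline{k}\log n}\}$, which for $n$ large is $\asymp_{K,\overline{k},\alpha}(\log n)^{1/\alpha}$ because $\alpha\le 2$; evaluating the infimum in $\eps_n^2$ at a single index $q\asymp\log n$ and using $\mu_q\le Kq^{1/\alpha}$ forces $2\mu_q/M\le 1/e$, hence a second term $\le(1/e)^{\log n}\log n=(\log n)/n$, which is dominated by the first term $\lesssim_{\alpha,K,\p,\overline{k}}\mathrm{Vol}(S^*+\mathbb{B}_{\overline{k}^{1/2}}(0))(\log n)^{\p/\alpha+\p/2+1}/n$; since $\p/\alpha+\p/2=\tfrac{2+\alpha}{2\alpha}\p$, this is the claimed rate. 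Part (iv) is the one genuine optimization: again $S = S^*$, and evaluating the infimum in $\eps_n^2$ at the given index $q$ (admissible for $n$ large) bounds the second term by $(2\mu/M)^q\log n$; I then pick $M = \bigl((2\mu)^q n/(\mathrm{Vol}(S^*+\mathbb{B}_{\overline{k}^{1/2}}(0))(\log n)^{\p/2})\bigr)^{1/(\p+q)}$ to equate the two terms, and a short computation shows they both equal, up to a $(\mu,q,\p)$-constant, $(\mathrm{Vol}(S^*+\mathbb{B}_{\overline{k}^{1/2}}(0))/n)^{q/(q+\p)}(\log n)^{\frac{\p q}{2(q+\p)}+1}$, with $\tfrac{\p q}{2(q+\p)} = \tfrac{q}{2q+2\p}\p$ as in the statement; one also checks $M\ge\sqrt{10\overline{k}\log n}$, which holds for $n$ large since this $M$ grows polynomially in $n$ while the threshold is polylogarithmic.

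I expect the bulk of the work to be in part (iv): getting the exponents of $1/n$ and $\log n$ to come out exactly right after equating the two terms of $\eps_n^2$. Beyond that, the only care needed is the uniform bookkeeping — confirming in each case that the proposed $M$ meets the constraint $M\ge\sqrt{10\overline{k}\log n}$ and that hypothesis~\eqref{eq-approximate-npmle} is strong enough to imply~\eqref{eq-hellinger-accuracy-approximate-npmle} — while the remainder is mechanical substitution into the rate function of Theorem~\ref{thm-density-estimation}.
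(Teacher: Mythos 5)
Your proposal is correct and takes essentially the same approach as the paper, which simply defers the calculations to Corollary~2.2 and Theorem~2.3 of \citet{saha2020nonparametric}: you choose the free parameters $S$ and $M$ in the rate function $\eps_n^2(M,S,\trueprior)$ to exploit the structure of $\trueprior$ in each case, check that the resulting $\eps_n^2$ is $\gtrsim(\log n)^{\p+1}/n$ so that~\eqref{eq-approximate-npmle} implies~\eqref{eq-hellinger-accuracy-approximate-npmle}, and then substitute into~\eqref{eq-hellinger-accuracy-expectation}. Your choices of $(M,S)$ — the support or $S^*$ with $M=\sqrt{10\overline{k}\log n}$ in (i)--(ii), $M\asymp(\log n)^{1/\alpha}$ in (iii), and the balancing $M\asymp\bigl(n/(\mathrm{Vol}(\cdot)(\log n)^{\p/2})\bigr)^{1/(\p+q)}$ in (iv) — reproduce the cited calculations and yield the stated exponents.
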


Given the general result in Theorem~\ref{thm-density-estimation}, Corollary~\ref{cor-density-estimation} follows directly from the calculations of \citet{saha2020nonparametric} in Corollary~2.2 and Theorem~2.3. Corollary~\ref{cor-density-estimation} captures an important adaptation property of the NPMLE. The cases $(a)-(d)$ described in the result are nested in the sense that $(a)$ implies $(b)$, $(b)$ implies $(c)$, and $(c)$ implies $(d)$; consequently the rates get progressively worse as our assumptions weaken. This means that the NPMLE, despite searching over all probability measures~$\Ps(\R^\p)$, obtains better rates when structure is present in the prior~$\trueprior$. 

Most strikingly, when~$\trueprior$ has discrete support with~$k^*$ support points, the rate in $(a)$ is~$\frac{k^*}{n}$ up to logarithmic factors {\sl without assuming any knowledge of~$k^*$}. This rate matches the minimax rate over all discrete distributions with at most~$k^*$ support points \citep{saha2020nonparametric}, meaning we could not expect to do much better even if~$k^*$ were known. In the extreme case where~$k^*=1$, the observations actually come from a simple Gaussian, i.e. $f_{\trueprior, \Sigma_i}(x) = \varphi_{\Sigma_i}(x-a^*_1)$ with common mean~$a^*_1\in\R^\p$, so our result says we don't lose much in the rate when we model the density with a mixture even when it turns out to be a simple Gaussian. Similarly, in $(b)$, the rate adapts to the size of the support~$S^*$ without prior knowledge of this support or even a bound on its size. Up through simultaneous moment control $(c)$, the dimension~$\p$ only affects the rate as a function of~$n$ through the logarithmic factor. Hence, the NPMLE avoids the usual curse of dimensionality to some extent, while still achieving consistency in the heavier tailed setting~$(d)$. The logarithmic factors in our bounds might be reduced slightly but cannot be eliminated as they are present in the minimax lower bounds \citep{kim2022minimax}.

\subsubsection{Implications for the Discretization Rate}\label{sec-discretization-for-statistical-purposes}

Theorem~\ref{thm-density-estimation} establishes that up to a multiplicative constant (depending only on the dimension~$\p$ and bounds $\underline{k}, \overline{k}$ on the eigenvalues of the covariance matrices) the quantity~$\eps_n^2(M, S, \trueprior)$ controls the average Hellinger accuracy~$\E[\bar{h}^2(f_{\npmle, \bullet}, f_{\trueprior, \bullet})]$ of the NPMLE. This also holds for approximate solutions to the optimization problem~\eqref{eq-NPMLE} that, in accordance with~\eqref{eq-hellinger-accuracy-approximate-npmle}, place nearly as much likelihood on the data as does \blue{the true prior~$\trueprior$}. It is natural to compare the requirement~\eqref{eq-hellinger-accuracy-approximate-npmle} with our computational guarantee on the discretization error~\eqref{eq-log-likelihood-approximation} from Proposition~\ref{prop-approximation}. The free parameter which controls the discretization error is the resolution~$\delta > 0$, which represents the width of the hypercubes we use to cover the ridgeline manifold~$\cm$ or any of its outer-approximations from Proposition~\ref{prop-support}. Thus, in order to satisfy the main requirement of Theorem~\ref{thm-density-estimation}, we need to take~$\delta$ such that
\[
\eps_n^2(M, S, \trueprior)
\gtrsim_{\p,\overline{k},\underline{k}} \p\underline{k}^{-2}\left(2D^2+\frac{1}{2}\right)\delta^2.
\]
Observe from the definition of~$\eps_n^2$ that $\eps_n^2(M, S, \trueprior) \gtrsim_{\p, \overline{k}, \underline{k}} \frac{(\log n)^{\p+1}}{n}$ for all $M \ge  \sqrt{10\overline{k}\log n}$ and all compact~$S$. Absorbing additional terms depending on $\p$,$\overline{k}$, and $\underline{k}$ and assuming for simplicity that $D > \frac{1}{2}$, choosing $\delta$ such that \begin{align}\label{eq-disc-rate-preliminary}
D^2\delta^2\lesssim_{\p, \overline{k}, \underline{k}} \frac{(\log n)^{\p+1}}{n}\end{align} 
suffices for the discretized NPMLE to be statistically indistinguishable from a global maximizer. 

The inequality~\eqref{eq-disc-rate-preliminary} gives a preliminary bound on the rate at which the discretization level~$\delta$ should decrease with~$n$. Still, recall from Proposition~\ref{prop-approximation} that $D$ denotes the diameter of the ridgeline manifold~$\cm$, so~$D$ does depend on~$n$. To sketch the dependence, let us consider a representative example where~$\trueprior$ has sub-Gaussian tails and all of the $\Sigma_i$'s are diagonal. In this case, by Proposition~\ref{prop-support} part $(b)$, the ridgeline manifold~$\cm$  is contained in the axis-aligned minimum bounding box of the data
\[\prod_{j=1}^\p\left[\min_{i\in \{1, \dots, n\}} X_{ij}, \max_{i\in \{1, \dots, n\}} X_{ij}\right].\]
Due to the tail condition, the length of each side of this hyper-rectangle grows like~$\sqrt{\log n}$ with high probability up to multiplicative factors depending on~$\overline{k}$: hence, the diameter~$D$ also scales like~$\sqrt{\log n}$ with high probability up to multiplicative factors depending on~$\overline{k}$ and~$\p$. We have thus shown that it suffices to discretize at a resolution of~$\delta\asymp \sqrt\frac{(\log n)^\p}{n}$. The number of points in our covering~$\ca$ is of order~$m\asymp \left(\frac{n}{(\log n)^\p}\right)^{\p/2}$. In the univariate case~$\p=1$, this slightly improves the finding of Theorem~2 of \citet{dicker2016high}, who showed that an~$m=\sqrt{n}$-discretization of the range of the data~$[X_{(1)}, X_{(n)}]$ suffices for the same rate in Hellinger distance. Their bound on the large-deviation probability is also logarithmic, i.e. $O\left(\frac{1}{\log n}\right)$ whereas our equation~\eqref{eq-hellinger-accuracy-whp} is polynomial in~$n$. Our analysis also clarifies that the sense in which we need approximate NPMLE~\eqref{eq-hellinger-accuracy-approximate-npmle} is through the likelihood of the observations, relative to the \blue{true prior~$\trueprior$}, which could be useful for comparing alternative approaches to approximating the NPMLE \blue{using simulations where we know~$\trueprior$}. 

\subsection{Denoising: an oracle inequality}

In this section we turn to the problem of estimating the oracle posterior means $(\orvec_i)_{i=1}^n$; see~\eqref{eq-oracle}. We evaluate the performance of~$(\estvec_i)_{i=1}^n$ (see~\eqref{eq-posterior-mean}) as an estimator for~$(\orvec_i)_{i=1}^n$ using the mean squared error risk measure:
\[\frac{1}{n}\sum_{i=1}^n\E\|\estvec_i - \orvec_i\|_2^2.\] 
Since~$\orvec_i$ is the optimal estimator of~$\truevec_i$ given model~\eqref{eq-obs-model}, the above mean squared error quantifies the price of misspecifying~$\trueprior$ with the data-driven estimator~$\npmle$. Hence, this loss is also known as the per-instance {\sl empirical Bayes regret}.

Our next result states that the rate function~$\eps_n^2(M, S, \trueprior)$ governing the Hellinger accuracy (see~\eqref{eq-eps}) also upper bounds the regret, up to additional logarithmic factors. We provide the same special cases of the rate as those stated in Corollary~\ref{cor-density-estimation}.

\begin{theorem}\label{thm-denoising} Suppose $X_i\simind f_{\trueprior, \Sigma_i}$ where $\underline{k}I_\p\preceq\Sigma_i\preceq \overline{k}I_\p$ for all $i$. Let $\npmle$ denote any approximate NPMLE satisfying~\eqref{eq-approximate-npmle} and
\begin{align}\label{eq-approximate-npmle-2}
\blue{\ell_n\left(n^{-1} \delta_{X_i} + (1-n^{-1})\npmle\right) \le \ell_n(\npmle), \qquad \text{for all } i=1,\ldots, n.}
\end{align}
Fix some $M \ge \sqrt{10\bar{k}\log n}$ and a nonempty, compact set $S\subset\R^\p$. Define $\eps_n^2(M, S, \trueprior)$ as in~\eqref{eq-eps}. For all $n\ge 5\underline{k}^{-\p/2}\lor (2\pi)^{\p/2}$,
\begin{align}
\frac{1}{n}\sum_{i=1}^n\E\|\estvec_i - \orvec_i\|_2^2
\lesssim_{\p, \overline{k}, \underline{k}}\eps_n^2(M, S, \trueprior)(\log n)^{(\p/2-1)\lor 3}.
\end{align}
In particular, consider the following special cases for $\trueprior$:
\begin{enumerate}
\item (Discrete support) If $\trueprior = \sum_{j=1}^{k^*} w^*_j\delta_{a^*_j}$, then 
\[
\frac{1}{n}\sum_{i=1}^n\E\|\estvec_i - \orvec_i\|_2^2
\lesssim_{\p, \overline{k}, \underline{k}}  \frac{k^*}{n} (\log n)^{\p+((\p/2)\lor 4)}.
\]
\item  (Compact support) If $\trueprior$ has compact support $S^*$, then 
\[
\frac{1}{n}\sum_{i=1}^n\E\|\estvec_i - \orvec_i\|_2^2
\lesssim_{\p, \overline{k}, \underline{k}} \frac{\mathrm{Vol}\left(S^* + \mathbb{B}_{\overline{k}^{1/2}}(0)\right)}{n} (\log n)^{\p+((\p/2)\lor 4)}.
\]
\item (Simultaneous moment control) Suppose that there is a compact $S^*\subset\R^\p$ and $\alpha\in (0, 2]$, $K\ge 1$ such that $\mu_q\coloneqq \E_{\vartheta\sim \trueprior}\left[\mathfrak{d}^q(\vartheta, S^*)\right]^{1/q}\le Kq^{1/\alpha}$ for all $q\ge 1$. Then 
\[
\frac{1}{n}\sum_{i=1}^n\E\|\estvec_i - \orvec_i\|_2^2
\lesssim_{\alpha, K, \p, \overline{k}, \underline{k}}  \frac{\mathrm{Vol}\left(S^* + \mathbb{B}_{\overline{k}^{1/2}}(0)\right)}{n} (\log n)^{\frac{2\alpha\p}{2+\alpha} + ((\p/2)\lor 4)}.
\]
\item (Finite $q^\mathrm{th}$ moment) Suppose that there exists a compact $S^*\subset\R^\p$ and $\mu, q > 0$ such that $\mu_q\le \mu$. Then 
\[
\frac{1}{n}\sum_{i=1}^n\E\|\estvec_i - \orvec_i\|_2^2
\lesssim_{\mu, q, \p, \overline{k}, \underline{k}}  \left(\frac{\mathrm{Vol}\left(S^* + \mathbb{B}_{\overline{k}^{1/2}}(0)\right)}{n}\right)^{\frac{q}{q+\p}} (\log n)^{\frac{q\p}{2q+2\p} + ((\p/2)\lor 4)}.
\]
\end{enumerate}
\end{theorem}

Theorem~\ref{thm-denoising} shows that the denoising problem shares the adaptation features as the density estimation problem. Since we have assumed~$\underline{k}I_\p\preceq\Sigma_i\preceq \overline{k}I_\p$ for all $i=1,\dots, n$, the same set of results also hold for the scaled regret $\frac{1}{n}\sum_{i=1}^n\E(\estvec_i - \orvec_i)^\mathsf{T}\Sigma_i^{-1}(\estvec_i - \orvec_i).$ 

\blue{Compared with the Hellinger accuracy results in the previous section, the denoising guarantees in Theorem~\ref{thm-denoising} have an additional condition \eqref{eq-approximate-npmle-2} that the approximate NPMLE~$\npmle$ places higher likelihood on the data than the slightly perturbed priors $n^{-1} \delta_{X_i} + (1-n^{-1})\npmle$. This condition is used to ensure that the likelihood of each individual observation $f_{\npmle, \Sigma_i}(X_i)$ is not too small. This condition can be directly verified for any approximate NPMLE, and in particular it necessarily holds for any discretized NPMLE $\npmle^\ca$ such that the observations~$\{X_i\}$ are a subset of the atoms~$\ca$, as is the case with the exemplar+ method (as long as~$N_1= n$) in Section~\ref{sec-exemplar}.}

\begin{remark} (On the proof of Theorem~\ref{thm-denoising} in Section~\ref{sec-proofs-denoising} of the Supplementary Material) Our proof extends Theorem~3.1 of \citet{saha2020nonparametric} on the multivariate, homoscedastic case~$\Sigma_i\equiv I_\p$ and Theorem~1 of \citet{jiang2020general} on the univariate, heteroscedastic case~$\p=1$, which in turn build upon Theorem~5 of~\citet{jiang2009general} on the univariate, homoscedastic case. \citet{jiang2009general} and \citet{jiang2020general} used a related notion of regret
\[
\sqrt{\frac{1}{n}\sum_{i=1}^n\E\|\estvec_i - \truevec_i\|_2^2} - \sqrt{\frac{1}{n}\sum_{i=1}^n\E\|\orvec_i - \truevec_i\|_2^2}.
\]

Tweedie's formula relates the oracle~\eqref{eq-oracle-tweedie} and empirical Bayes~\eqref{eq-gmleb-tweedie} posterior means to the corresponding marginal likelihoods, so the density estimation results of the previous section turn out to be useful for proving Theorem~\ref{thm-denoising} as well. In particular, we consider Bayes rules for priors in a covering of the Hellinger ball 
\[
\left\{\aprior\in \Ps(\R^\p) : \bar{h}^2(f_{\aprior, \bullet}, f_{\trueprior, \bullet})  \lesssim_{\p,\overline{k}, \underline{k}}t^2\eps_n^2(M, S, \trueprior)\right\},
\]
which, by Theorem~\ref{thm-density-estimation}, contains~$\npmle$ with high probability. For a fixed prior~$\aprior$, the denominator in the correction factor of Tweedie's formula
\[
X_i + \Sigma_i\frac{\nabla f_{\aprior, \Sigma_i}(X_i)}{f_{\aprior, \Sigma_i}(X_i)},
\]
namely~$f_{\aprior, \Sigma_i}(X_i)$, can be small. To avoid dividing by near-zero quantities, we regularize the above Bayes rule by replacing the denominator with~$\max\{f_{\aprior, \Sigma_i}(X_i), \rho\}$ for a small positive $\rho$. To handle heteroscedastic errors, we show that Tweedie's formula, even its regularized form, is equivariant under scale transformations.
\end{remark}

\subsection{Deconvolution: estimating the prior}

We turn to the fundamental question of how well~$\npmle$ estimates~$\trueprior$. This is known as the deconvolution problem and has received much attention in the statistical literature \citep{meister2009deconvolution,fan1991optimal,zhang1990fourier,carroll1988optimal}. Indeed, the original consistency results \citep{kiefer1956consistency, pfanzagl1988consistency} for the NPMLE focused on weak convergence of~$\npmle$ to~$\trueprior$ as~$n\to\infty$. While most prior work on deconvolution has focused on deconvolution with homoscedastic error distributions, \cite{delaigle2008density} \blue{\citep[also see][]{staudenmayer2008density}} allowed for heteroscedastic errors but relied on kernel estimators which contain additional smoothing parameters. By contrast, the NPMLE provides a tuning-free estimate of the mixing distribution~$\trueprior$, yet to our knowledge, non-asymptotic bounds on the rate of convergence for~$\npmle$ in the deconvolution problem are not known. 

In practice, the true prior~$\trueprior$ may not be discrete even though~$\npmle$ always has a discrete solution, and even if both distributions are discrete, their supports will typically differ. Our loss function must allow for comparisons of probability measures with potentially disjoint supports. \citet{nguyen2013convergence} established that a natural loss for this problem is the Wasserstein distance from the theory of optimal transport
\begin{align*}
W_2^2(G, H) \coloneqq \min_{(U,V)\in \Pi_{G, H}} \mathbb{E}\|U-V\|^2_2, 
\end{align*}
where $G, H\in \Ps(\R^\p)$ are two probability measures and $\Pi_{G,H}$ denotes the set of couplings of $G$ and $H$, i.e. joint distributions over $(U, V)\in \R^{2\p}$ such that $U\sim G$ and $V\sim H$. Indeed, even the likelihood criterion is intimately related to the Wasserstein distance: in the homoscedastic case $\Sigma_i\equiv \sigma^2 I_{\p}$, it is known that the NPMLE~\eqref{eq-NPMLE} equivalently solves an entropic-regularized optimal transport problem \citep{rigollet2018entropic}.

\citet{nguyen2013convergence} connected the deconvolution error $W_2^2(G, H)$ to the density estimation error between the mixtures, i.e. $h^2(f_{G,I_\p}, f_{H, I_\p})$ in a homoscedastic Gaussian deconvolution setting. By leveraging similar techniques as well as the support bounds of Proposition~\ref{prop-support}, we arrive at the following upper bound on the deconvolution error.

\begin{theorem}\label{thm-deconvolution} Suppose $X_i\simind f_{\trueprior, \Sigma_i}$ where $\underline{k}I_\p\preceq\Sigma_i\preceq \overline{k}I_\p$ and $\Sigma_i$ is a diagonal matrix for each $i$. Suppose further that $\trueprior([-L, L]^\p)=1$ for some $L\ge 0$. Let $\npmle$ denote any approximate NPMLE supported on the minimum axis-aligned bounding box of the data satisfying~\eqref{eq-approximate-npmle}. Then there is a function $n(d,\overline{k}, \underline{k}, L)$ such that, for all sample sizes $n$ with $n \ge n(d,\overline{k}, \underline{k}, L)$,
\begin{align*}
W_2^2(\trueprior, \npmle)
&\lesssim_{\p,\overline{k}}\frac{1}{\log n},
\end{align*}
with probability at least $1-\frac{4d}{n^8}$.
\end{theorem}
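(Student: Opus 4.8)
The plan is to combine the density-estimation accuracy of the NPMLE from Corollary~\ref{cor-density-estimation}(ii) (usable because $\trueprior$ is compactly supported), the support bound of Corollary~\ref{lem-support}(ii), and a deterministic ``inversion inequality'' of \citet{nguyen2013convergence} type relating the Wasserstein distance between two compactly supported mixing measures to the Hellinger distance between the corresponding Gaussian mixtures. The form I would aim for is: for probability measures $G, H$ on a box $[-R, R]^\p$ and diagonal $\Sigma$ with $\underline{k}I_\p\preceq\Sigma\preceq\overline{k}I_\p$, if $h^2(f_{G,\Sigma}, f_{H,\Sigma})$ is below a threshold $\epsilon_0(\p, \overline k, \underline k, R)$ then $W_2^2(G, H)\lesssim_{\p,\overline k}\overline{k}\big/\log\!\big(1/h^2(f_{G,\Sigma}, f_{H,\Sigma})\big)$. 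This follows by passing to Fourier transforms, writing $\widehat{G - H}(t) = \widehat{f_{G,\Sigma} - f_{H,\Sigma}}(t)\,e^{t^\T\Sigma t/2}$ and noting that near the origin the exponential blow-up $e^{t^\T\Sigma t/2}\le e^{\overline k\|t\|^2/2}$ is dominated by the smallness of $\widehat{f_{G,\Sigma}-f_{H,\Sigma}}$, whence $G$ and $H$ match moments up to order $\asymp\log(1/h)$, which bounds $W_2(G, H)$. The point that lets this survive in the application is that the length scale here is the noise level $\overline k$, with the support radius $R$ only controlling $\epsilon_0$.

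First I would extract a single good index. On a high-probability event $E_1$ from Corollary~\ref{cor-density-estimation}(ii) with $S^*=[-L,L]^\p$, $\bar h^2(f_{\npmle,\bullet}, f_{\trueprior,\bullet})\lesssim_{\p,\overline k,\underline k}(\log n)^{\p+1}/n$, so by Markov's inequality at least half the indices obey $h^2(f_{\npmle,\Sigma_i}, f_{\trueprior,\Sigma_i})\le 2\bar h^2 =: h_*^2 \lesssim_{\p,\overline k,\underline k}(\log n)^{\p+1}/n$; fix one such $i^*$ (also $\|f_{\npmle,\Sigma_{i^*}}-f_{\trueprior,\Sigma_{i^*}}\|_{L^1}\lesssim h_*$). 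Second, a union bound over the $n\p$ Gaussian coordinates puts the data box — hence $\mathrm{supp}(\npmle)$, by Corollary~\ref{lem-support}(ii) — inside $[-\rho_n,\rho_n]^\p$ with $\rho_n := L+\sqrt{c_0\overline k\log n}$ (the absolute constant $c_0$ chosen so the failure probability is of polynomial order $\lesssim d/n^8$), so $\rho_n^2\asymp_{\overline k}\log n$, while $\mathrm{supp}(\trueprior)\subseteq[-L,L]^\p\subseteq[-\rho_n,\rho_n]^\p$. I would take the working event to be the intersection of $E_1$ with this one, tuning the exponent in Corollary~\ref{cor-density-estimation}(ii) and $c_0$ so the total failure probability is at most $4d/n^8$.

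The crux is a sharpened tail bound: on this event, $\npmle(S_n^c)\lesssim_{\p,\overline k,\underline k} n^{-\delta}$, where $S_n := [-L-s_n, L+s_n]^\p$ with $s_n$ a small power of $\log n$ (so $s_n\to\infty$ but $s_n = o(\rho_n)$), for a fixed $\delta\in(0,\tfrac12)$. For this I would use the polynomial-deconvolution identity: if $q$ is the (polynomial, same-degree) solution of $q * \varphi_{\Sigma_{i^*}} = p$, obtained from $p$ coordinatewise via Hermite polynomials, then $\int p\, d\npmle - \int p\, d\trueprior = \int q\, d(f_{\npmle,\Sigma_{i^*}}-f_{\trueprior,\Sigma_{i^*}})$. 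Splitting this integral at radius $2\rho_n$, using $\|f_{\npmle,\Sigma_{i^*}}-f_{\trueprior,\Sigma_{i^*}}\|_{L^1}\lesssim h_*$ on the inside, the Gaussian tails of both mixtures (components centred in $[-\rho_n,\rho_n]^\p$) on the outside, and the estimate $\|q\|_{L^\infty([-2\rho_n,2\rho_n]^\p)}\lesssim_\p C_1^{\deg p}\,\|p\|_{L^\infty([-\rho_n,\rho_n]^\p)}$ valid whenever $\deg p\lesssim\rho_n^2/\overline k$ (Hermite estimates, pairing the $\rho_n^{-m}$ from rewriting $p$ in a rescaled Chebyshev basis with the $\Sigma_{i^*}^{m/2}$ inside $\mathrm{He}_m$), one obtains $\big|\int p\, d(\npmle-\trueprior)\big|\lesssim_\p C_1^{\deg p}\,\|p\|_{L^\infty([-\rho_n,\rho_n]^\p)}\,h_* + n^{-9}$. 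Applying this to a Chebyshev ``threshold'' polynomial $p$ with $0\le p$ on $[-\rho_n,\rho_n]^\p$, $p\ge 1$ on $\{\mathrm{dist}_\infty(\cdot,[-L,L]^\p)\ge s_n\}$, and $|p|\le n^{-\delta}$ on $[-L,L]^\p$ gives $\npmle(S_n^c)\le\int p\, d\npmle\le\int p\, d\trueprior+\big|\int p\, d(\npmle-\trueprior)\big|\lesssim n^{-\delta}$; the budget $\deg p\asymp\log n/\log\log n$ is precisely what makes $p$ simultaneously (i) below $n^{-\delta}$ on $[-L,L]^\p$, (ii) not so large on $[-\rho_n,\rho_n]^\p$ that $C_1^{\deg p}\|p\|_\infty h_*$ exceeds $n^{-\delta}$, and (iii) inside the range $\deg p\lesssim\rho_n^2/\overline k$; reconciling (i) and (ii) is what forces $s_n$ to be a power of $\log n$ strictly between $\tfrac14$ and $\tfrac12$.

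To finish, set $\widetilde\npmle := \npmle|_{S_n}/\npmle(S_n)$ and $\tau := \npmle(S_n^c)\lesssim n^{-\delta}$. Joint convexity of $W_2^2$ gives $W_2^2(\npmle,\widetilde\npmle)\le\tau\cdot\mathrm{diam}([-\rho_n,\rho_n]^\p)^2\lesssim_{\p,\overline k} n^{-\delta}\log n$, which is $\le 1/\log n$ for $n$ large; and since truncation costs at most $\tau$ in total variation of the mixing measures, hence in squared Hellinger of the mixtures, $h^2(f_{\widetilde\npmle,\Sigma_{i^*}},f_{\trueprior,\Sigma_{i^*}})\lesssim\tau+h_*^2\lesssim n^{-\delta}$, so $\log(1/h^2(f_{\widetilde\npmle,\Sigma_{i^*}},f_{\trueprior,\Sigma_{i^*}}))\ge\tfrac\delta2\log n$ for $n$ large. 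Applying the inversion inequality to $G=\widetilde\npmle$, $H=\trueprior$, both supported in $S_n$ of radius $R=L+s_n$ a small power of $\log n$: the threshold $\epsilon_0(\p,\overline k,\underline k,R)\asymp e^{-cR^2/\overline k}$ is met by $n^{-\delta}$ once $n\ge n(\p,\overline k,\underline k,L)$, giving $W_2^2(\widetilde\npmle,\trueprior)\lesssim_{\p,\overline k}\overline k/\log(1/h^2(f_{\widetilde\npmle,\Sigma_{i^*}},f_{\trueprior,\Sigma_{i^*}}))\lesssim_{\p,\overline k}1/\log n$, and then $W_2^2(\trueprior,\npmle)\le 2W_2^2(\npmle,\widetilde\npmle)+2W_2^2(\widetilde\npmle,\trueprior)\lesssim_{\p,\overline k}1/\log n$. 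The main obstacle is the sharpened tail bound of the third paragraph — controlling the Gaussian-deconvolution operator on polynomials and balancing the three competing scales $\rho_n^2\asymp\log n$ (forced by the crude support bound), $\log(1/h_*)\asymp\log n$ (the density-estimation accuracy), and the degree $\asymp\log n/\log\log n$ at which a polynomial tiny on $[-L,L]^\p$ can be made to exceed $1$ on a shell without blowing up on $[-\rho_n,\rho_n]^\p$ — so that the truncation window $s_n$ still vanishes relative to $\rho_n$; the inversion inequality itself follows \citet{nguyen2013convergence} and needs only the routine modification to heteroscedastic diagonal noise via $\underline k I_\p\preceq\Sigma_i\preceq\overline k I_\p$.
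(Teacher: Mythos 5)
Your plan is correct in outline, but it takes a genuinely different and much heavier route than the paper, and its one risky step deserves a flag. The paper never isolates a single index $i^*$, never truncates $\npmle$, and proves no sharpened tail bound: it applies the smoothing argument of \citet{nguyen2013convergence} directly, writing $W_2(\trueprior,\npmle)\le 2\sqrt{m_2(K)}\,\delta + W_2(\trueprior*K_\delta,\npmle*K_\delta)$ for a band-limited kernel $K$, bounding the middle term via the deconvolution kernels with Fourier transform $\widetilde{K_\delta}/\widetilde{\varphi_{\Sigma_i}}$ (this is where diagonality enters) in terms of the \emph{average} Hellinger distance $\bar h$, and then choosing $\delta^{-2}\asymp \log(1/\bar h)/\overline{k}$. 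In that chain the bounding-box radius $R\asymp\sqrt{\overline{k}\log n}$ enters only through moment factors that multiply a positive power of $\bar h\lesssim \sqrt{(\log n)^{\p+1}/n}$, so it is harmless: no smallness threshold of the form $e^{-cR^2/\overline{k}}$ is needed, and your single-index reduction, the Chebyshev--Hermite bound $\npmle(S_n^{\mathsf c})\lesssim n^{-\delta}$, and the truncation to a $(\log n)^{a}$-box are workarounds for a threshold that is more restrictive than what the inversion inequality actually requires (your degree bookkeeping at $\deg p\asymp\log n/\log\log n$ and the Hermite deconvolution estimate do appear to check out, but they are substantial extra work). The one point where your sketch could genuinely fail is the inversion inequality itself: if you execute your Fourier/moment-matching derivation literally (characteristic functions close on a ball of radius $\asymp\sqrt{\log(1/h)/\overline{k}}$, hence moments approximately matched to order $\asymp\log(1/h)$, hence Wasserstein small), the passage from moment matching on $[-R,R]^\p$ to $W_2$ carries a factor of $R^2$, yielding $W_2^2\lesssim R^2/\log(1/h)\asymp(\log n)^{2a-1}$ even after your truncation, which is worse than $1/\log n$. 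So you must prove that lemma the way Nguyen (and the paper) do, via smoothing, where the leading term $\overline{k}/\log(1/h^2)$ has no support-radius factor --- at which point the truncation machinery becomes unnecessary and the paper's direct argument is available. Two small bookkeeping items: the high-probability Hellinger statement you invoke is Theorem~\ref{thm-density-estimation} (Corollary~\ref{cor-density-estimation} is stated in expectation), and the paper assembles the probability $1-\frac{4d}{n^8}$ exactly as you propose, from the Gaussian box concentration ($1-\frac{2d}{n^8}$) together with the Hellinger tail bound at $t^2=8$.
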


Theorem~\ref{thm-deconvolution} (proved in Section~\ref{sec-proofs-deconvolution} of the Supplementary Material) upper bounds the rate of convergence under the Wasserstein distance by the extremely slow logarithmic rate~$\frac{1}{\log n}$. It is well known that the smoothness of the Gaussian errors makes the deconvolution more difficult; in fact, the logarithmic rate is minimax optimal \citep{dedecker2013minimax}.

\begin{remark} (On Theorem~\ref{thm-deconvolution}) To our knowledge, Theorem~\ref{thm-deconvolution} is novel, and the rate of convergence for the NPMLE under a Wasserstein distance has not been studied previously. The structure of the proof follows the proof of Theorem~2 of \citet{nguyen2013convergence}. To deal with the fact that~$\npmle$ and~$\trueprior$ are typically singular, we convolve each with a distribution with full support but low variance. Compared to our results on the density estimation and denoising problems, Theorem~\ref{thm-deconvolution} makes additional assumptions on the problem structure, specifically that the covariance matrices are diagonal and that~$\trueprior$ is compactly supported. Many practical applications satisfy the diagonal covariances restriction, including both of our applications in Section~\ref{sec-app}.
\end{remark}

A common feature to our results on density estimation and denoising have been that the NPMLE adapts to the complexity of~$\trueprior$. It is reasonable to conjecture, then, that in the deconvolution problem,~$\npmle$ will also enjoy some adaptation properties under the Wasserstein distance. We close this section with a sharper result on the Wasserstein rate in the special case where the observations are drawn from Gaussian distributions with common mean~$\mu\in \R^\p$.

\begin{theorem}\label{thm-deconvolution-point-mass} Suppose $X_i\simind \cn(\mu, \Sigma_i)$, i.e. $X_i\simind f_{\trueprior, \Sigma_i}$ where $\trueprior = \delta_\mu$ and $\underline{k}I_\p\preceq \Sigma_i \preceq\overline{k}I_\p$ for all $i =1,\dots, n$. Let $\npmle$ denote any approximate NPMLE satisfying~\eqref{eq-approximate-npmle} and supported on $\mathbb{B}_{\kappa r}(\bar{X})$
where $\kappa = \overline{k}/\underline{k}$, $r = \max_i\|X_i-\bar{X}\|_2$, and~$\bar{X} = \frac{1}{n}\sum_{i=1}^nX_i$. Then
\[
W_2(\npmle, \trueprior) \lesssim_{\p, \overline{k}, \underline{k}}t^{3/2}\frac{(\log n)^{(\p+3)/4}}{n^{1/4}}
\]
with probability at least $1-3n^{-t^2}$ for all $t\ge 1$. 
\end{theorem}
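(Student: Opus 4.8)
The plan is to combine the Hellinger bound already available for the point-mass prior with a deterministic ``deconvolution inequality'' tailored to a single atom. For the first ingredient I would apply Theorem~\ref{thm-density-estimation} with $\trueprior=\delta_\mu$, taking the free set $S=\{\mu\}$ and $M\asymp\sqrt{\overline{k}\log n}$: then every moment $\mu_q$ of $\delta_\mu$ relative to $S=\{\mu\}$ vanishes, so the second term of $\eps_n^2$ disappears and $\eps_n^2(M,S,\delta_\mu)\asymp_{\p,\overline{k}}(\log n)^{\p+1}/n$, while the hypothesis~\eqref{eq-hellinger-accuracy-approximate-npmle} reduces to our assumption~\eqref{eq-approximate-npmle}; hence~\eqref{eq-hellinger-accuracy-whp} gives, with probability at least $1-2n^{-t^2}$,
\[
\bar h^2(f_{\npmle,\bullet},f_{\delta_\mu,\bullet})\ \lesssim_{\p,\overline{k},\underline{k}}\ t^2\,\frac{(\log n)^{\p+1}}{n}.
\]
Since the left-hand side is an average of nonnegative terms, some index $i^\star$ satisfies $h^2(f_{\npmle,\Sigma_{i^\star}},f_{\delta_\mu,\Sigma_{i^\star}})\le\bar h^2(f_{\npmle,\bullet},f_{\delta_\mu,\bullet})$, and $\Sigma_{i^\star}$ still obeys $\underline{k}I_\p\preceq\Sigma_{i^\star}\preceq\overline{k}I_\p$. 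For the second ingredient I would note that, because $X_i\simind\cn(\mu,\Sigma_i)$ with $\Sigma_i\preceq\overline{k}I_\p$, a union bound over $\chi^2$ tails gives $\max_i\|X_i-\mu\|_2\lesssim_{\overline{k}}t\sqrt{\p\log n}$ and $\|\bar X-\mu\|_2$ of smaller order, with probability at least $1-n^{-t^2}$; combined with the hypothesis that $\npmle$ is supported on $\mathbb{B}_{\kappa r}(\bar X)$ (the bound from Corollary~\ref{lem-support}(iii)), this shows $\npmle$ is supported on $\mathbb{B}_R(\mu)$ with $R\lesssim_{\p,\overline{k},\underline{k}}t\sqrt{\log n}$. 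From here I would work on the intersection of these two events, of probability at least $1-3n^{-t^2}$.

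The heart of the matter is the following deterministic claim: if $G\in\Ps(\R^\p)$ is supported on $\mathbb{B}_R(\mu)$ with $R\ge 1$ and $\underline{k}I_\p\preceq\Sigma\preceq\overline{k}I_\p$, then $W_2^2(G,\delta_\mu)\lesssim_{\p,\overline{k},\underline{k}}R^{(\p+4)/2}\,h(f_{G,\Sigma},f_{\delta_\mu,\Sigma})$. To prove it I would first whiten, replacing $x$ by $\Sigma^{-1/2}x$ and $G$ by its pushforward: by the change of variables underlying Lemma~\ref{lem-invariance} the Hellinger distance is unchanged, the support radius is multiplied by at most $\underline{k}^{-1/2}$, and $W_2^2$ changes by a factor in $[\overline{k}^{-1},\underline{k}^{-1}]$, so it suffices to treat $\Sigma=I_\p$ and (after translating) $\mu=0$. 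Writing $\sigma^2\coloneqq W_2^2(G,\delta_0)=\int\|\theta\|_2^2\,dG(\theta)$, the exact identity $(\|\cdot\|_2^2-\p)\ast\varphi_{I_\p}=\|\cdot\|_2^2$ yields $\int(\|u\|_2^2-\p)f_{G,I_\p}(u)\,du=\sigma^2$ and $\int(\|u\|_2^2-\p)\varphi_{I_\p}(u)\,du=0$; multiplying by a cutoff $\eta_R$ equal to $1$ on $\mathbb{B}_{2R}(0)$ and vanishing outside $\mathbb{B}_{3R}(0)$ perturbs each of these by only $O_\p(e^{-cR^2})$, since $f_{G,I_\p}(u)\le(2\pi)^{-\p/2}e^{-(\|u\|_2-R)^2/2}$ for $\|u\|_2\ge 2R$ and $\varphi_{I_\p}$ decays faster still. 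Hence $\sigma^2=\int(\|u\|_2^2-\p)\,\eta_R(u)\,\bigl(f_{G,I_\p}(u)-\varphi_{I_\p}(u)\bigr)\,du+O_\p(e^{-cR^2})$, and Cauchy--Schwarz bounds the integral by $\bigl\|(\|\cdot\|_2^2-\p)\eta_R\bigr\|_{L^2}\,\bigl\|f_{G,I_\p}-\varphi_{I_\p}\bigr\|_{L^2}$. A polar-coordinate estimate gives $\bigl\|(\|\cdot\|_2^2-\p)\eta_R\bigr\|_{L^2}\lesssim_\p R^{(\p+4)/2}$, while $\bigl\|f_{G,I_\p}-\varphi_{I_\p}\bigr\|_{L^2}\lesssim_\p h(f_{G,I_\p},\varphi_{I_\p})$ because both densities are pointwise at most $(2\pi)^{-\p/2}$, so $(\sqrt a-\sqrt b)^2\ge(a-b)^2/(4(2\pi)^{-\p/2})$ on $[0,(2\pi)^{-\p/2}]$; the exponential remainder is absorbed.

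Finally, on the good event I would apply the deconvolution inequality with $G=\npmle$, $\Sigma=\Sigma_{i^\star}$ and $R\lesssim_{\p,\overline{k},\underline{k}}t\sqrt{\log n}$, then substitute $h(f_{\npmle,\Sigma_{i^\star}},f_{\delta_\mu,\Sigma_{i^\star}})\le\bar h(f_{\npmle,\bullet},f_{\delta_\mu,\bullet})\lesssim_{\p,\overline{k},\underline{k}}t(\log n)^{(\p+1)/2}n^{-1/2}$; the $e^{-cR^2}$ term is $O(n^{-ct^2})$ and of smaller order. Taking square roots gives $W_2(\npmle,\delta_\mu)\lesssim_{\p,\overline{k},\underline{k}}t^{a}(\log n)^{b}\,n^{-1/4}$ for explicit $a,b$; a tighter accounting of the support radius (a universal constant for all but the last deviation bound, and a sharper cutoff estimate) reduces these to the stated $t^{3/2}$ and $(\log n)^{(\p+3)/4}$. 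I expect the deterministic deconvolution inequality to be the main obstacle --- in particular, choosing the test function so that its $L^2$ norm grows as slowly as possible in $R$, since that growth is precisely what fixes the power of $\log n$ in the final rate, and controlling the truncation and Gaussian-tail remainders; the high-probability bookkeeping routing the support radius and the Hellinger deviation into the single bound $1-3n^{-t^2}$ is then routine.
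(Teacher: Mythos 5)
Your overall architecture matches the paper's: reduce $W_2^2(\npmle,\delta_\mu)$ to the second moment of $\npmle$ around $\mu$, express that moment as an integral of $f_{\npmle,\Sigma}-f_{\delta_\mu,\Sigma}$ against $\|x\|_2^2$ (via the fact that convolution with $\varphi_\Sigma$ adds second moments), and then control this by the Hellinger distance from Theorem~\ref{thm-density-estimation} together with the support bound $\mathbb{B}_{\kappa r}(\bar X)$. However, your key deterministic step is too lossy to yield the stated exponents. You apply Cauchy--Schwarz with respect to Lebesgue measure, so the test function $(\|u\|_2^2-\p)\eta_R$ enters through its $L^2(du)$ norm, which grows like $R^{(\p+4)/2}$; this gives $W_2^2\lesssim R^{(\p+4)/2}\,h$ rather than the $W_2^2\lesssim R^{2}\,h$ that the theorem requires. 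Since $R$ is driven by $\kappa\max_i\|X_i-\bar X\|_2$, it necessarily grows like $\sqrt{\log n}$ (with an extra factor of a power of $t$ for the $1-n^{-t^2}$ guarantee); it cannot be taken to be a universal constant under the stated hypotheses, so your closing remark that ``a tighter accounting of the support radius'' recovers $t^{3/2}(\log n)^{(\p+3)/4}$ does not go through. Tracking your bound honestly gives an extra factor of order $R^{\p/2}\asymp t^{\p/2}(\log n)^{\p/4}$ beyond the claimed rate, so the theorem as stated is not proved.

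The repair is to weight the Cauchy--Schwarz by the densities themselves, which is what the paper does: from $|f-g|=|\sqrt f-\sqrt g|(\sqrt f+\sqrt g)$ one gets
\[
\int\bigl(f_{\npmle,\Sigma_i}-f_{\delta_\mu,\Sigma_i}\bigr)\|x\|_2^2\,dx
\le 2\,h\bigl(f_{\npmle,\Sigma_i},f_{\delta_\mu,\Sigma_i}\bigr)
\left(\int\bigl(f_{\npmle,\Sigma_i}+f_{\delta_\mu,\Sigma_i}\bigr)\|x\|_2^4\,dx\right)^{1/2},
\]
and the fourth moment is now taken under probability densities, so it is $O_{\p,\overline k,\underline k}(1+R^4)$ with no dimension-dependent power of $R$ and no truncation or Gaussian-tail bookkeeping needed: $\int f_{\delta_\mu,\Sigma_i}\|x\|_2^4\lesssim\overline k^2\p(\p+2)$ and $\int f_{\npmle,\Sigma_i}\|x\|_2^4\lesssim \overline k^2\p(\p+2)+\max_j\|\hat a_j\|_2^4$, the latter controlled by the support assumption and the $\chi^2$ maximal bound exactly as in your step for $R$. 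With this replacement (and averaging over $i$, or picking your $i^\star$), your Hellinger input $\bar h\lesssim t(\log n)^{(\p+1)/2}n^{-1/2}$ combines with the $O(t^2(\log n)^2)$ fourth-moment bound to give precisely $W_2\lesssim_{\p,\overline k,\underline k}t^{3/2}(\log n)^{(\p+3)/4}n^{-1/4}$ on the intersection of the events, which is the paper's argument.
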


If the approximate NPMLE~$\npmle$ of Theorem~\ref{thm-deconvolution-point-mass} is selected according to the strategy described in Section~\ref{sec-computation}, then by Proposition~\ref{prop-support} part $(c)$ its support will be contained within the ball~$\mathbb{B}_{\kappa r}(\bar{X})$. This additional assumption on the support of the approximate NPMLE is needed to have some control over the moments of~$\npmle$.

Up to logarithmic factors, the $n^{1/4}$-rate in Theorem~\ref{thm-deconvolution-point-mass} agrees with Corollary~4.1 of~\cite{ho2016strong} for the MLE of an overfitted mixture. Specifically, their result compared the MLE of $k$-component finite Gaussian mixture to a true mixing distribution~$\trueprior$ with $k^*<k$ components. \cite{wu2020optimal} and \cite{doss2020optimal} also derived the~$n^{1/4}$-rate for a different estimator under a different Wasserstein metric. All of these previous results were restricted to the homoscedastic setting. In our setting, $k^* = 1$ and $k=\khat$ is the data-dependent order of the NPMLE. The best known bound on~$\khat$ is logarithmic in $n$ \citep{polyanskiy2020self}, whereas \cite{ho2016strong} required $k$ to be fixed as~$n\to \infty$. When~$k^*$ is known, a faster $n^{1/2}$-rate is possible~\citep{heinrich2018strong} and is achieved by the MLE in a well-specified finite mixture model, i.e. setting~$k=k^*$~\citep{ho2016strong}. 

While the slower $n^{1/4}$-rate appears to be the price of flexibility of the NPMLE, Theorem~\ref{thm-deconvolution-point-mass} establishes that the NPMLE indeed adapts to structure in~$\trueprior$. Our analysis is greatly simplified by the assumption $\trueprior=\delta_{\mu}$, since there is only one coupling between~$\npmle$ and~$\trueprior$. We leave for future work the important question of the extent to which~$\npmle$ adapts to more general distributions~$\trueprior$.

\section{Applications}\label{sec-app} 

\blue{In this section, we first expand upon our discussion of denoising the color-magnitude diagram (CMD) from Section~\ref{sec-intro}, and then present three additional applications. The second application illustrates the performance of our method on an astronomy problem with $\p=19$ dimensions; the remaining two applications use hierarchical linear regression on education and biomedical data.}

\subsection{Color-magnitude diagram}\label{sec-cmd}

Our modeling strategy is closely related to the work of~\citet{anderson2018improving}. To compare our method to extreme deconvolution \citep{bovy2011extreme}, we use the same stellar sample, relaxing only their assumption that the prior $\trueprior$ is a mixture of Gaussians; by contrast, we allow $\trueprior$ to be an arbitrary probability measure. Specifically, we assume that after a suitable transformation of the color and magnitude measurements, the pair, denoted~$X_i\in \R^2$, come from a two-dimensional Gaussian mixture $f_{\trueprior, \Sigma_i}$ with known covariance~$\Sigma_i$. 

\begin{figure}[ht!]
\centering
\includegraphics[width=.9\textwidth]{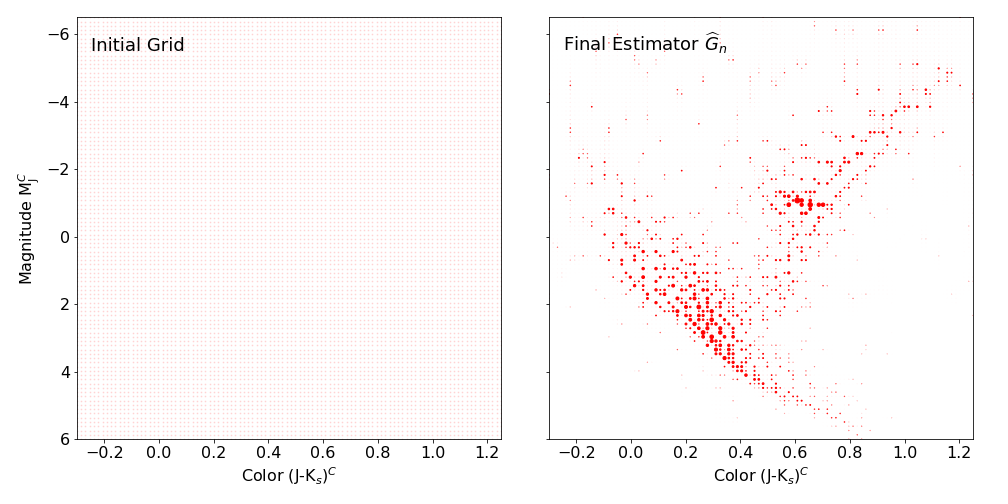}
\caption{Initial grid (left) of $m=10^4$ support points and estimated prior $\npmle$ (right) where the area of each atom is proportional to its weight.}\label{fig-cmd-npmle}
\end{figure}

Figure~\ref{fig-cmd} in Section~\ref{sec-intro} shows the plot of the observed data $X_i$ (left) and estimated posterior means~$\estvec_i$ (right), the latter constituting the denoised CMD. Contrasting our CMD with theirs \citep[][Figure 7]{anderson2018improving}, which we do not depict here, it appears that ours performs more shrinkage overall. Our CMD has rather sharp tails in the bottom of the plot (i.e. the main sequence) and the top right (i.e. the tip of the red-giant branch) as well as a definitive cluster in the center-right (i.e. the red clump).

There are also important differences between the NPMLE and extreme deconvolution in the estimated prior $\npmle$. Figure~\ref{fig-cmd-npmle} shows the initial and final iterates in the computation of the NPMLE. We are ultimately using a discrete distribution to model the prior, and since all of the covariance matrices $\Sigma_i$ are diagonal, by Proposition~\ref{prop-support} we have restricted the support points to lie in the minimum axis-aligned bounding box of the data. By contrast, extreme deconvolution models the prior as itself a Gaussian mixture, so the estimated prior \citep[][Figure 4]{anderson2018improving} actually is supported on all of $\R^2$. 

\subsection{Chemical abundance ratios}\label{sec-apogee}

Our second data set is taken from the Apache Point Observatory
Galactic Evolution Experiment survey (APOGEE); see \citet{majewski2017apache}, \citet{abolfathi2018fourteenth}. We examine chemical abundance ratios
for the red clump (RC) stars given in the DR14 APOGEE red clump
catalog; see \citet{ratcliffe2020tracing} where this data set has been
studied. Following the pre-processing in \citet{ratcliffe2020tracing} to
remove the outliers with anomalous abundance measurements, the data
set contains $n = 27,238$ observations of $\p=19$ relative chemical abundances. 

\blue{
We restrict our attention to two abundances among the $19$~dimensions, namely, [Si/Fe]-[Mg/Fe]. To explore the dependence of our estimator on the dimensionality of the data, we consider two distinct empirical Bayes analyses. The first fits the NPMLE on all $\p=19$ dimensions, estimating the posterior mean and then plotting only the components of our estimates corresponding to the two abundances of interest [Si/Fe]-[Mg/Fe]. The second analysis discards the $17$ other coordinates and then computes the $2$-dimensional NPMLE on those same abundances.

In Figure~\ref{fig-apogee-npmle} we plot the observed data (left) and estimated posterior
means using Gaussian denoising under the estimated prior $\npmle$. The estimates based on the full $\p=19$ dimensional data set are in the center panel, and the estimates based on only the $2$ dimensions are on the right. In two dimensions, the denoised data reveals a very interesting structure --- it shows that the variables
[Si/Fe] and [Mg/Fe] are strongly correlated. The observations for the upper right cluster of stars could be lying on one dimensional manifold; something that is not at all visible when plotting the original data. On the other hand, the NPMLE based on the full $\p=19$ dimensional data set performs much less aggressive shrinkage. We believe that the dimension-dependent logarithmic factor $(\log n)^{\p/2+1}$ in the Definition~\eqref{eq-eps} of $\epsilon_n^2$ plays a nontrivial role in this relatively small data set. For example, setting $\p=2$ and $n=27,238$, the log factor is approximately $104$, i.e., much smaller than~$n$; on the other hand, changing $\p$ to $19$ in the same formula gives a logarithmic factor of over $39$ billion. The dimension-dependent logarithmic factor is not simply an artifact of our analysis: \citet{kim2022minimax} prove a minimax lower bound for Hellinger accuracy which contains a similar, dimension dependent logarithmic factor.}

\begin{figure}[ht!]
\centering
\includegraphics[width=.9\textwidth]{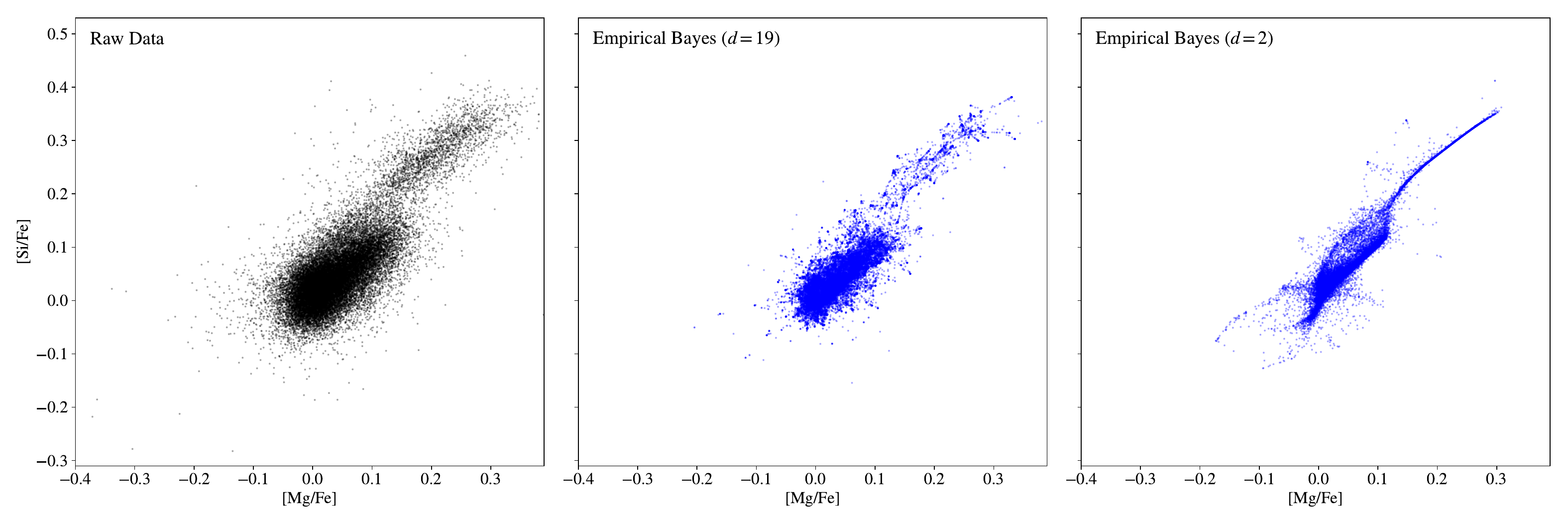}
\caption{Observed data (left) and estimated posterior means based on all $d=19$ dimensions (center) and only $d=2$ dimensions (right).}\label{fig-apogee-npmle}
\end{figure}

\blue{
\subsection{Math scores in U.S. public schools}\label{sec-math-scores}

Our third data set comes from the Education Longitudinal Study of 2002, collected by the National Center for Education Statistics of the U.S. Department of Education. We apply our approach to hierarchical linear models described in Section~\ref{sec-hierarchical-linear-model} and compare to a Bayesian analysis of \citet{hoff2009first}. Specifically, we look at a survey including math test scores and normalized socioeconomic status (SES) of 10th grade students. The survey has information across $n=100$ different large public high schools on a total of $\sum_{i=1}^n N_i = 1,993$ children. We consider a $d=2$ dimensional hierarchical regression setting where the response $y_{ij}$ represents the math score of student $j$ in school $i$, and $X_{ij} = [1, \texttt{SES}_{ij}]$ contains the corresponding SES score and an intercept term. The number of students~$N_i$ surveyed in each school varies significantly, ranging from $4$ to $32$ with a median of $20$ students. The goal is to estimate separate regression coefficients~$\beta^*_i$ for each school~$i=1,\ldots,n$, but if we simply compute OLS~$b_i = (X_i^\mathsf{T}X_i)^{-1}X_i^\mathsf{T}y_i$ separately for each school, then the estimates for schools with small~$N_i$ will be especially poor.

In Figure~\ref{fig-mathscore}, we compare the empirical Bayes regression lines to the OLS estimates. We see that the empirical Bayes method shrinks the bulk of slopes quite aggressively, but they are not shrunk towards a single point. The standard fully Bayesian analysis \citep[which can be seen in Figure~11.3 of][]{hoff2009first} models the prior~$\trueprior$ as a normal distribution, placing further priors on its parameters. An important difference is that all of the regression coefficients are shrunk towards a single point under a normal prior. By contrast, for our method, the multiple locations to shrink towards and their relative weights are determined by the NPMLE~$\npmle$, as depicted in the bottom right panel of Figure~\ref{fig-mathscore}. 

\begin{figure}[p!]
\centering
\includegraphics[width=.9\textwidth]{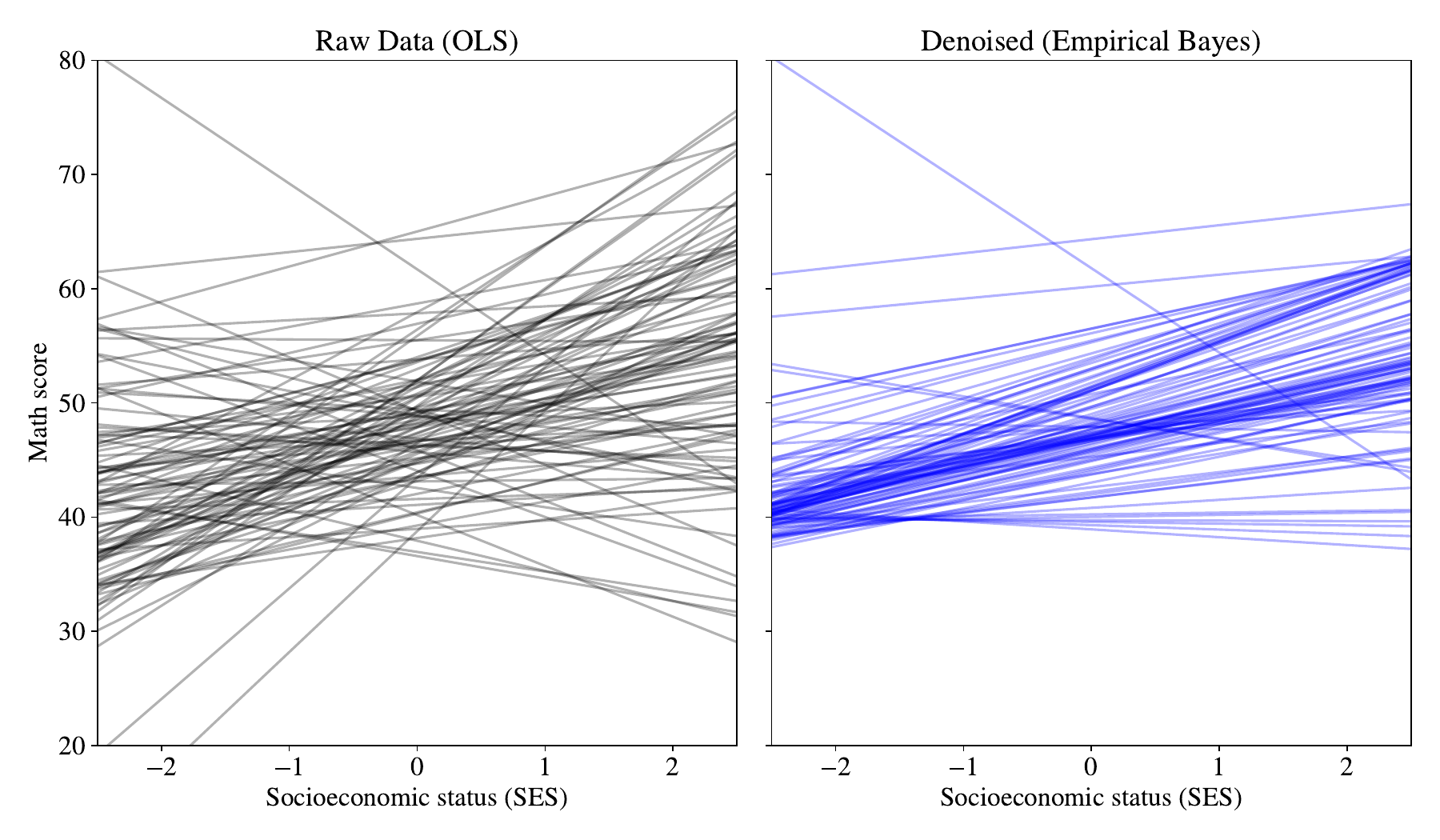}
\caption{\blue{Relationship between socioeconomic status (SES) and math score across different high schools. Above: The left panel shows regression lines for each school based on the OLS estimates~$b_i$ fitted separately. The right panel shows regression lines for each school based on the estimated posterior means~$\hat\beta_i$. \\
Below: The left panel contains the same information as above, but directly plotting the values of~$b_i$ and $\hat{\beta}_i$ rather than the regression lines. The right panel contains the discrete NPMLE~$\npmle$, where the area of each dot is proportional to the corresponding weight.}}\label{fig-mathscore}
\includegraphics[width=.9\textwidth]{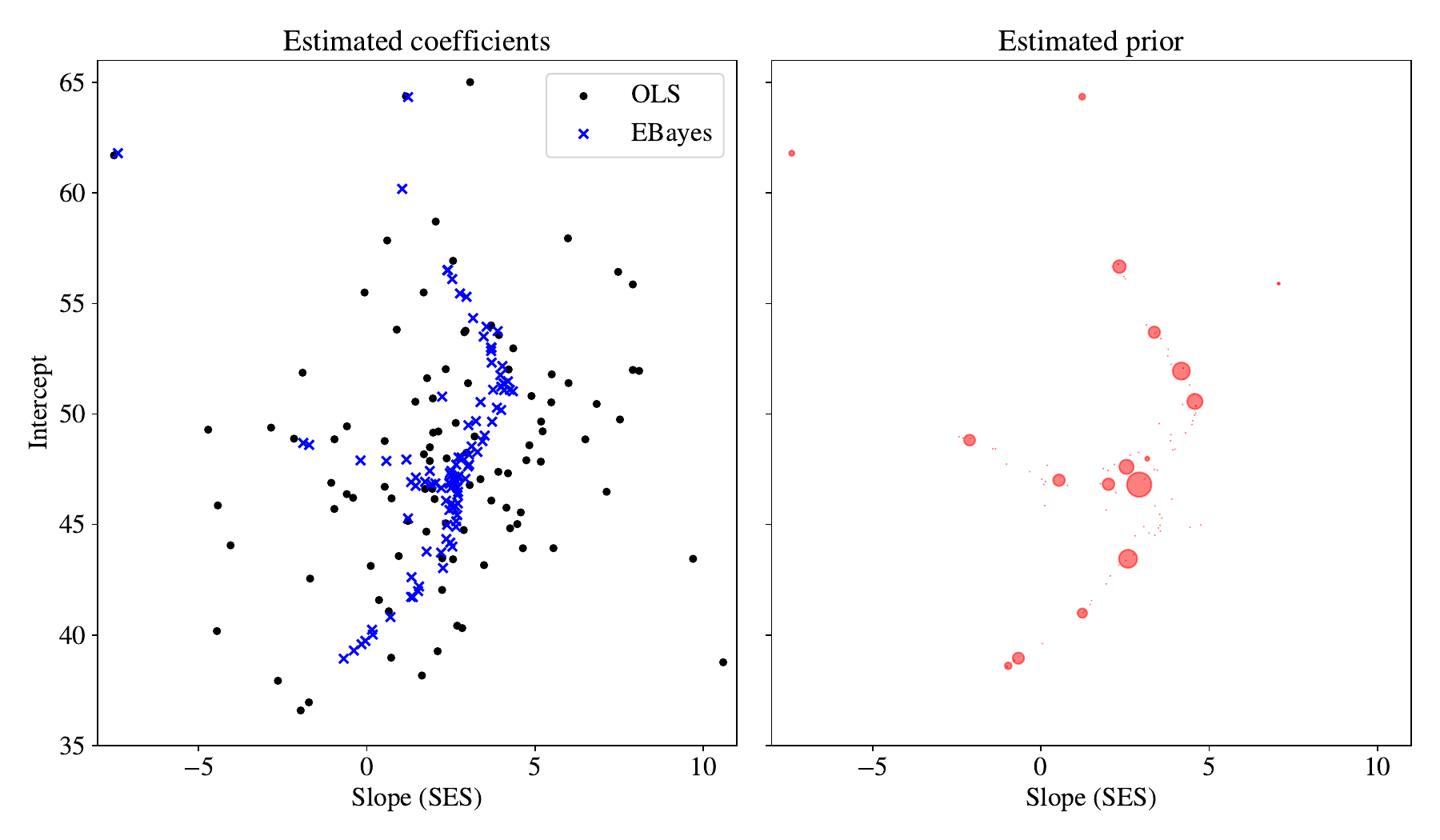}
\end{figure}

\begin{figure}[pht!]
\centering
\includegraphics[width=\textwidth]{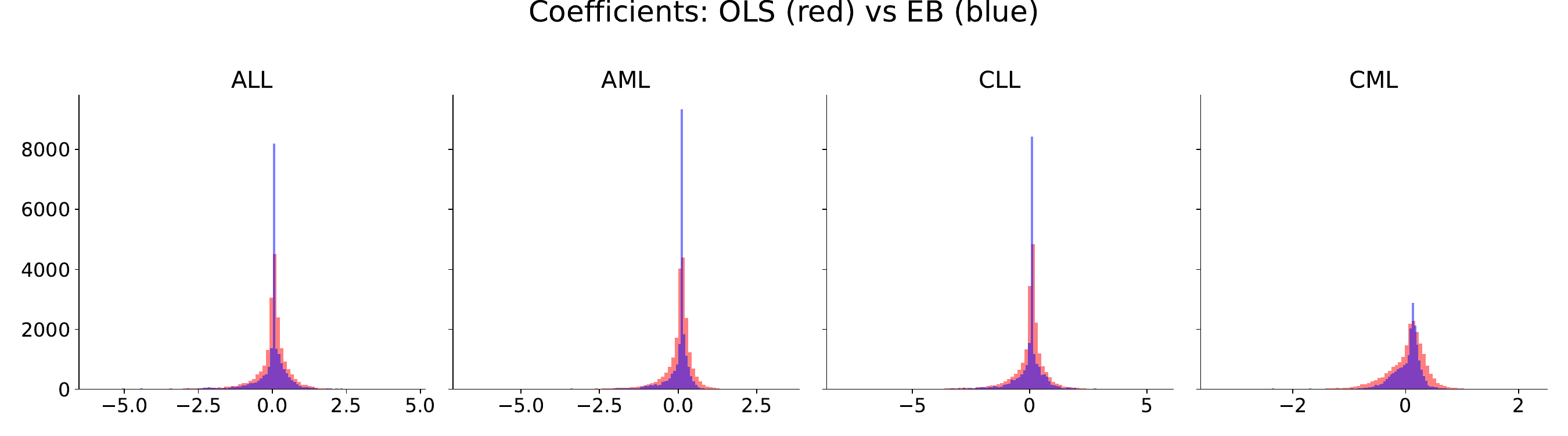}
\blue{(a) Histogram of coefficients estimated with OLS~$b_i$ (red) and empirical Bayes~$\hat\beta_i$ (blue).} \\
~

\includegraphics[width=\textwidth]{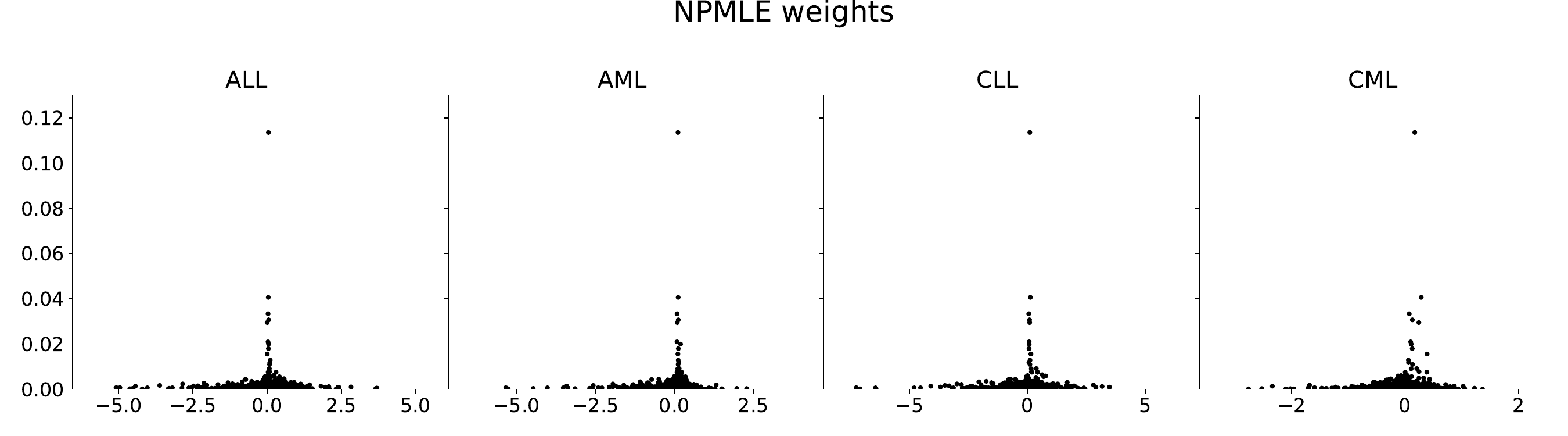}
\blue{(b) The NPMLE weights.} \\
~

\includegraphics[width=.45\textwidth]{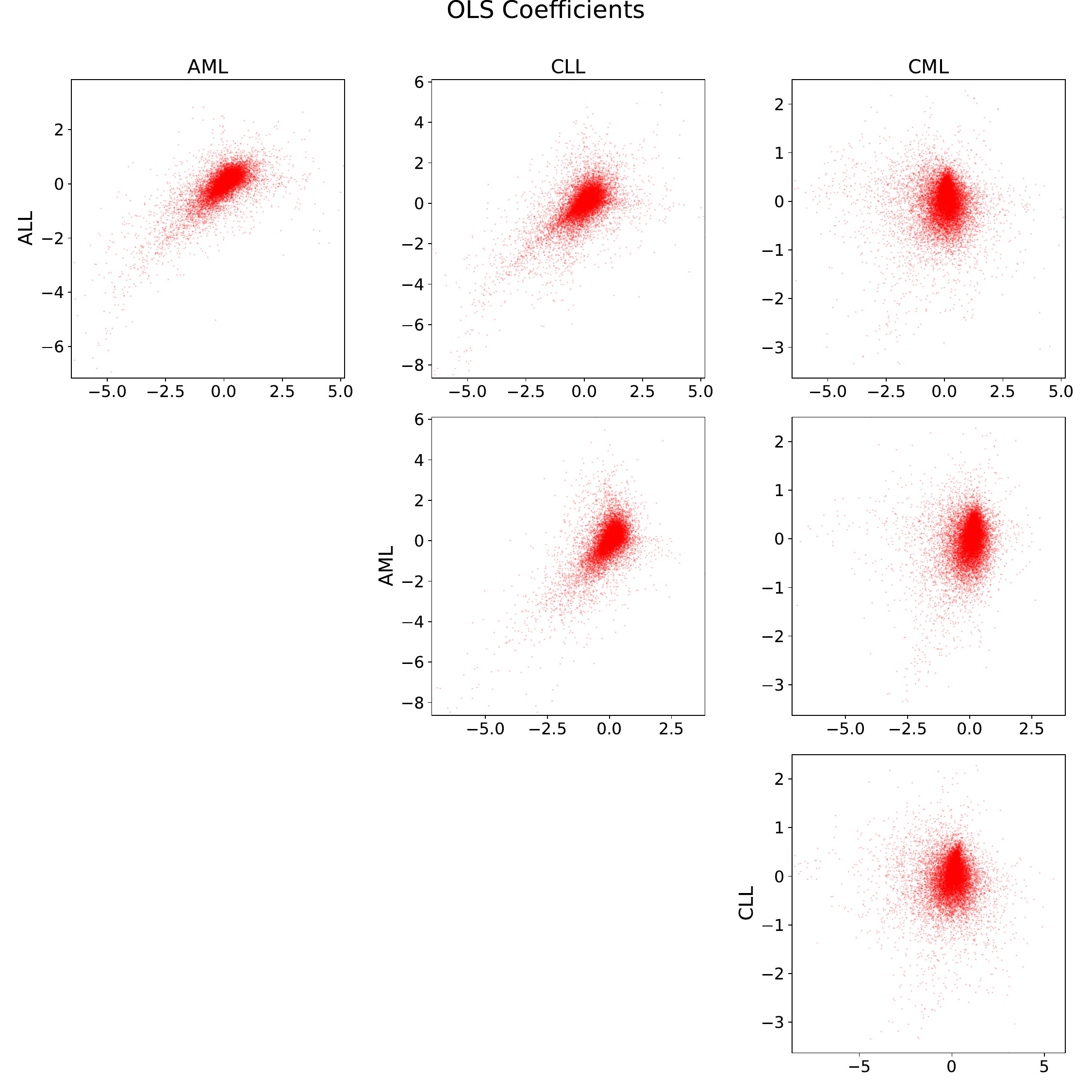}
\includegraphics[width=.45\textwidth]{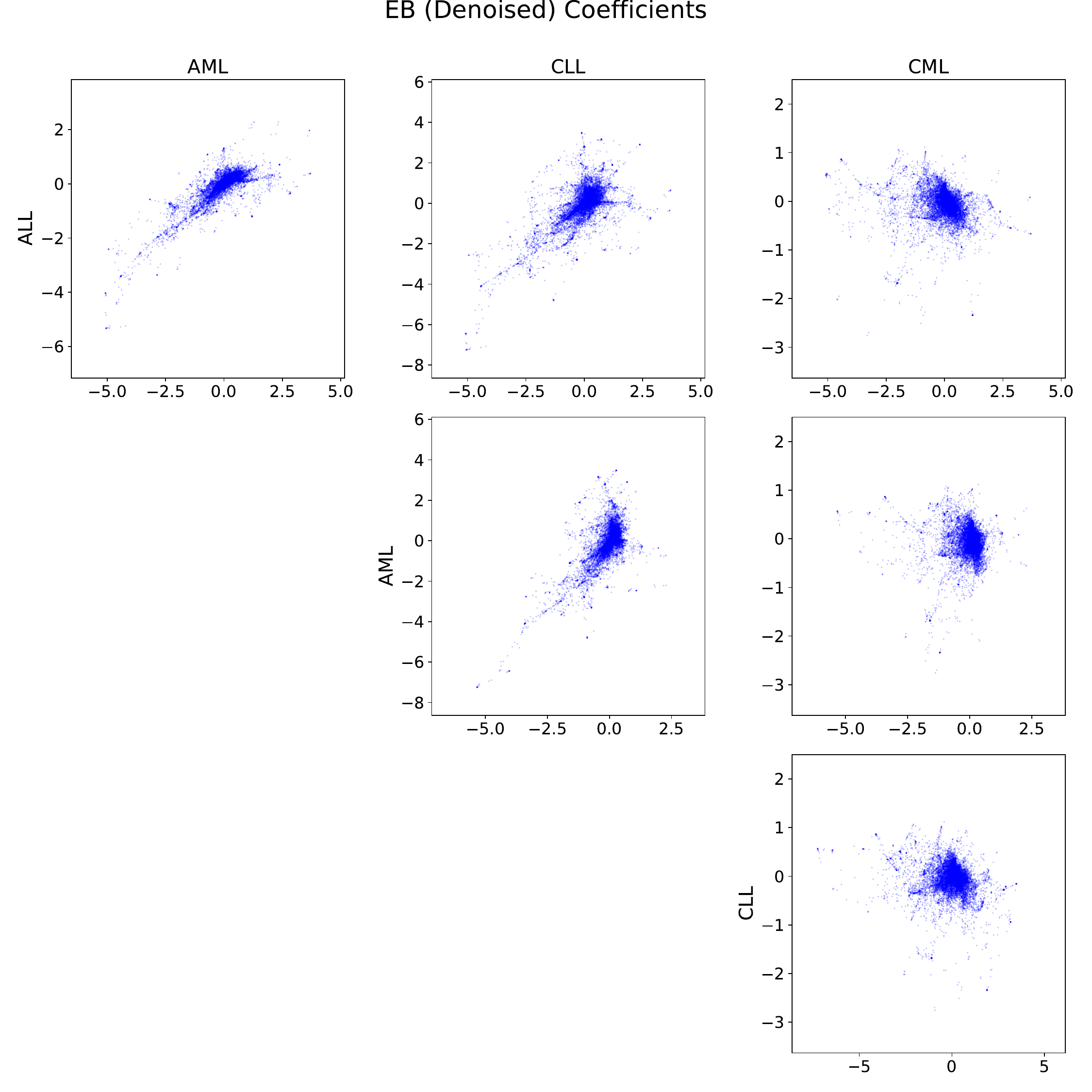}

\blue{(c) Pairwise scatterplots of coefficient estimates for OLS~$b_i$ (red) and empirical Bayes~$\hat\beta_i$ (blue).}  \\ 
~

\caption{Relationship between cancer status and gene expression level for $n = 20,172$ genes. }\label{fig-leukemia}
\end{figure}

\subsection{Microarray data}\label{sec-leukemia}

Our fourth data set \citep{leukemiasEset} is a subset of the Microarray Innovations in Leukemia (MILE) study \citep{kohlmann2008international,haferlach2010clinical}. The data contains $60$ bone marrow samples of patients with one of four types of leukemia  (abbreviated \texttt{ALL, AML, CLL, CML}) as well as non-leukemia patients. We are interested in estimating the contrast of the difference in expression levels for each leukemia type compared to the non-leukemia baseline. We observe $n = 20,172$ genes and $N_i\equiv N := 60$ samples per gene. We use a one-hot encoding for each type as well as an intercept term, so $\p=5$: specifically, for each patient $j\in \{1,\ldots,N\}$ and each leukemia type $\texttt{TYPE}\in \{\texttt{ALL, AML, CLL, CML}\}$, set $X_{j,\texttt{TYPE}} = 1$ if patient $j$ has that cancer type, and $X_{j,\texttt{TYPE}} = 0$ otherwise. We model the expression of gene~$i\in \{1,\ldots,n\}$ in patient~$j\in\{1,\ldots,N\}$ as 
\[
y_{ij} = \beta^*_{i,0} + X_{j,\texttt{ALL}}\beta^*_{i,\texttt{ALL}} + X_{j,\texttt{AML}}\beta^*_{i,\texttt{AML}} + X_{j,\texttt{CLL}}\beta^*_{i,\texttt{CLL}} + X_{j,\texttt{CML}}\beta^*_{i,\texttt{CML}}+\varepsilon_{ij},
\]
where $\varepsilon_{ij}\simiid\cn(0, \sigma^2)$. For each cancer \texttt{TYPE}, $\beta^*_{i,\texttt{TYPE}}$ represents the effect of that cancer type on the expression level of gene~$i$, relative to the non-leukemia baseline. In the hierarchical linear model, we treat the $\p$-dimensional vectors of regression coefficients~$\beta^*_i = (\beta^*_{i,0},\beta^*_{i,\texttt{ALL}},\beta^*_{i,\texttt{AML}},\beta^*_{i,\texttt{CLL}},\beta^*_{i,\texttt{CML}})$ as i.i.d. from some prior~$\trueprior\in \mathcal{P}(\R^\p)$. As in Section~\ref{sec-hierarchical-linear-model}, we first compute the OLS estimates~$b_i\in \R^p$ separately for each gene~$i$. Next, we compute the pooled estimate~$\hat\sigma^2$ of the observation variance~$\sigma^2$. Finally, we compute the exemplar+ NPMLE~$\npmle$ and the estimated posterior means~$\hat\beta_i$.

In Figure~\ref{fig-leukemia}, we compare the empirical Bayes estimates to the OLS estimates. From both the marginal histograms in \ref{fig-leukemia}(a) and the plots of the weights in \ref{fig-leukemia}(b), it is clear that the NPMLE promotes sparsity by shrinking coefficients towards the spikes near zero. Three of the four types have a spike very close to zero, whereas the fourth type (CML) places its spike on small but positive effects. In genomics applications where we are interested in multiple effects, we can consider multiple kinds of sparsity. The first kind is a strong form of sparsity, where $\beta^*_i = 0$ for most~$i$. For the second, $\beta^*_{i,\texttt{TYPE}} = 0$ for most~$i\in \{1,\ldots,n\}$ and most types, but for any given gene~$i$, $\beta^*_{i,\texttt{TYPE}} = 0$ need not imply $\beta^*_{i,\texttt{TYPE}'} = 0$ for some other type $\texttt{TYPE}'$. The NPMLE permits both forms of sparsity, so we do not need to know a priori which form is more appropriate.
}

\section{Concluding remarks}\label{sec-discussion}

In this paper we study the NPMLE~$\npmle$ as an estimator of a prior distribution~$\trueprior$ in the presence of multivariate, heteroscedastic measurement errors. We resolve a number of basic questions on the existence, uniqueness, and support of the NPMLE, where in several cases the answers differ significantly from the traditional univariate, homoscedastic setting. Our analysis identifies a dual mixture density~$\widehat\psi_n$ with Gaussian~$\cn(X_i,\Sigma_i)$ components at each observation, whose modes contain the atoms of the NPMLE. Our characterization implies that the NPMLE is supported in the ridgeline manifold $\cm$, which is a compact subset of~$\R^\p$ defined in terms of the observations $(X_i)_{i=1}^n$ and corresponding covariance matrices $(\Sigma_i)_{i=1}^n$. This support reduction allows us to approximate the NPMLE by a finite-dimensional convex optimization over the mixing proportions, and we develop a novel approach to bounding the discretization error, justifying the gridding scheme proposed by~\cite{koenker2014convex}. Our real data applications show that this approach is viable for practical astronomy problems. Our theoretical results in Section~\ref{sec-stats} provide strong justification for using the NPMLE in a variety of contexts---estimating the prior, marginal densities, and oracle posterior means. 

We conclude by outlining some possible future research directions. Computation remains an important barrier for large-scale applications. Specifically, for problems with a large number of samples, e.g. $n\gg 10^6$, some additional forms of approximation are warranted, such as stochastic optimization or binning via coresets (see also \cite{ritchie2019scalable} on approaches for scaling Extreme Deconvolution to large data sets). \blue{Another open problem is to come up with more principled approach to the exemplar+ method, specifically for choosing the counts~$(N_j)$ and for sampling the weights~$\alpha$'s.} 

Next, while our framework allows the prior~$\trueprior$ to be arbitrary, the underlying assumption---that the means~$(\truevec_i)$   are {\sl identically distributed}---can sometimes be difficult to justify for heteroscedastic observations. The i.i.d. assumption reflects the belief that the observation covariance~$\Sigma_i$ is uninformative for the corresponding mean~$\truevec_i$. This assumption led to reasonable results in our applications but may be problematic in other settings. In the univariate, heteroscedastic case, \citet{weinstein2018group} proposed grouping observations with similar variances and applying a spherically symmetric estimator separately within each group. Their approach is capable of capturing dependence between $\truevec_i$ and $\sigma_i^2$, at the expense of not sharing information across groups. \blue{Moving beyond binning the variances~$\sigma_i^2$, \cite{chen2023empirical} models the conditional distribution~$\truevec_i\mid \sigma_i^2$ as a flexible location-scale family.} To our knowledge, these approaches have not been extended to multivariate settings. Thus, in multivariate settings there remains the important problem of how to model the relationship between $\theta_i$ and $\Sigma_i$.

Finally, there remain a number of open statistical questions for future work. Our analysis of the denoising problem focuses on estimating the posterior mean based on the unknown prior~$\trueprior$, but there are numerous inferential goals one could target with an approximate prior. The analyst might summarize the empirical posteriors using a different functional, such as the posterior median or the posterior mean of some transformed parameter. This question warrants a more general analysis evaluating the quality of the empirical posterior distributions for the true, unknown posteriors.

\section*{Acknowledgements}

We thank Bridget L. Ratcliffe for providing both astronomy data sets and Nikos Ignatiadis for suggesting the microarray data set. J.A.S. would like to thank Jacob Steinhardt and Serena Wang for their valuable feedback on an early draft. We are grateful to Kevin Chen for correcting a step in the proof of Theorem~\ref{thm-denoising}, to Yihong Wu for correcting the statement of Proposition~\ref{prop-support}.

\section*{Funding}

J.A.S. was supported by NSF Grant DMS-2023505 and by the Office of Naval Research under the Vannevar Bush Fellowship program under grant number N00014-21-1-2941. A.G. was supported by NSF CAREER Grant DMS-16-54589. B.S. was supported by NSF Grant DMS-2015376.

\appendix
\section{Additional experiments}\label{sec-additional-experiments}

\blue{
We ran a series of simulation experiments to assess the plausibility of Assumption~\eqref{eq-hellinger-accuracy-approximate-npmle} of Theorem~\ref{thm-density-estimation}, which requires the likelihood~$\ell_n(\npmle)$ of the approximate NPMLE to be nearly as large as the likelihood~$\ell_n(\trueprior)$. We compute the exemplar+ NPMLE for simulated homoscedastic data sets, where the covariances are $\Sigma_i = s I_d$ for all $i\in \{1,\ldots, n\}$. In Figure~\ref{fig-npmle-moderate-dims-example}, we plot the difference $\ell_n(\widehat{G}_n) - \ell_n(G^*)$ as a function of the noise level~$s$, for various choices of~$\trueprior$ and dimensions~$d\in\{2, 5, 20\}$. The gap $\ell_n(\widehat{G}_n) - \ell_n(G^*)$ was positive across all experimental conditions: in fact, the gap between the two likelihoods grows considerably in high dimensions. We thus believe that the exemplar+ NPMLE is suitable for our applications.

\begin{figure}[ht!]
\centering
\includegraphics[width=.3\textwidth]{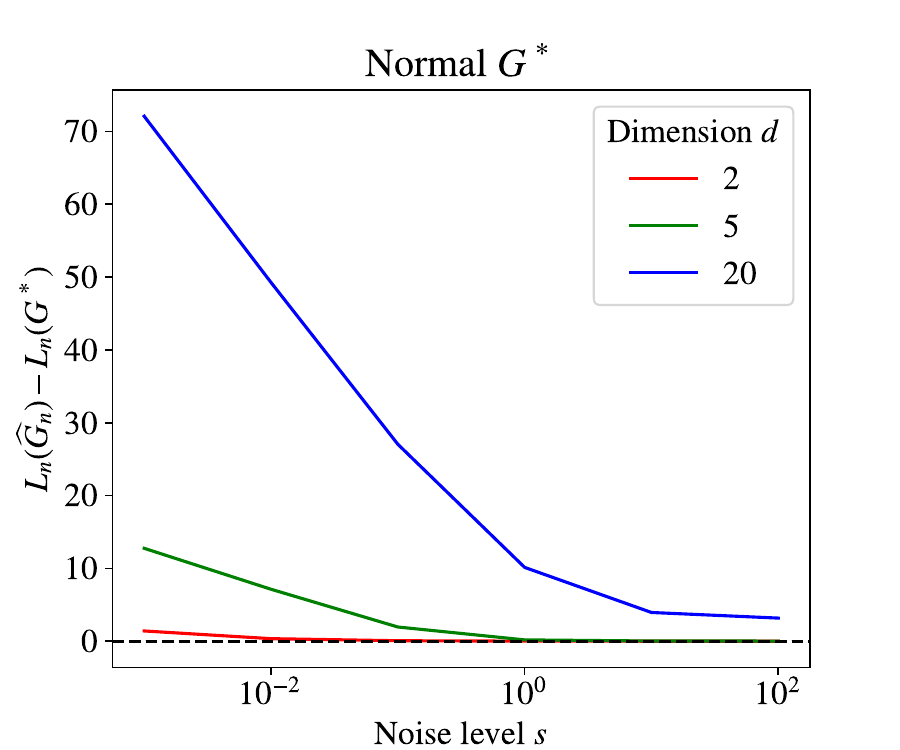}
\includegraphics[width=.3\textwidth]{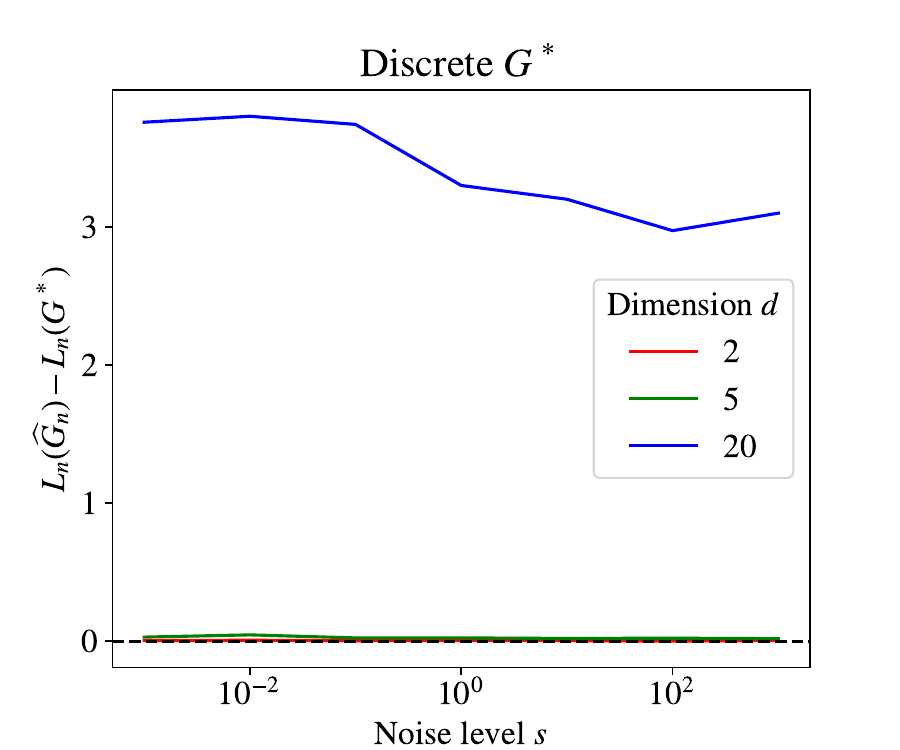}
\includegraphics[width=.3\textwidth]{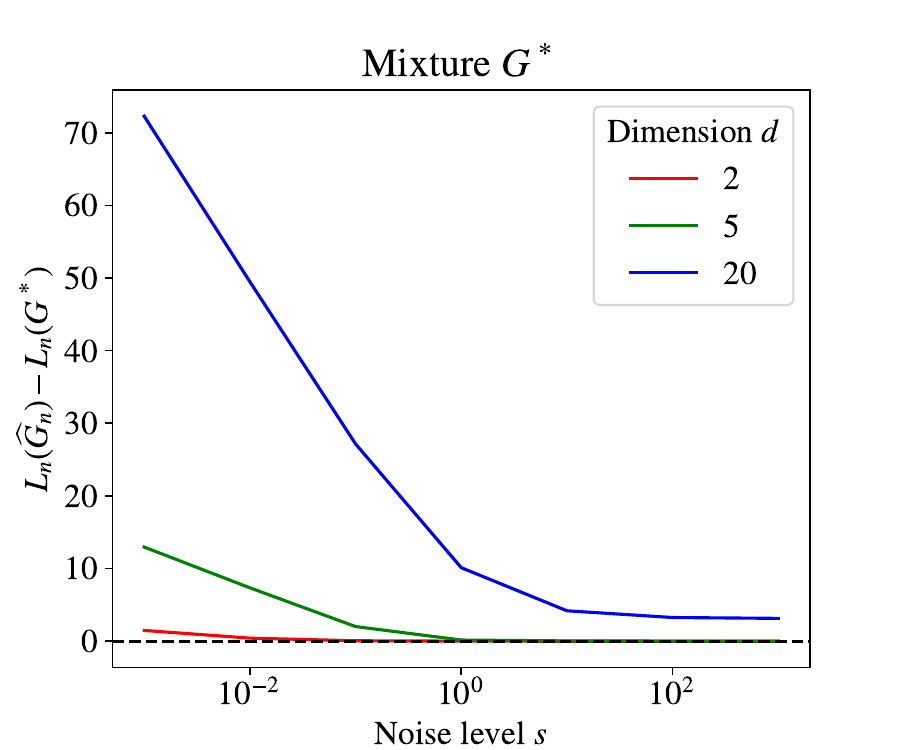}
\caption{\blue{The difference in likelihoods $\ell_n(\widehat{G}_n) - \ell_n(G^*)$ of the approximate NPMLE~$\widehat{G}_n$ and the true prior~$G^*$ as a function of the noise level~$s$ (where the observation covariance is~$\Sigma = sI_d$), for different dimensions~$d\in \{2, 5, 20\}$. Each panel shows a different choice of~$G^*$: from left to right, normal~($G^* = \mathcal{N}(0,I_d)$), discrete~($G^* = \frac{1}{2}\delta_{0} + \frac{1}{2}\delta_{{\bf 1}_{d}/\sqrt{d}}$) and a mixture of normals ($G^* = \frac{1}{2}\mathcal{N}(0,I_d) + \frac{1}{2}\mathcal{N}({\bf 1}_{d}/\sqrt{d},I_d)$). 
}}\label{fig-npmle-moderate-dims-example} 
\end{figure}

}

\section{Proofs of Results in Sections~\ref{sec-characterization} and~\ref{sec-computation}}\label{sec-proofs-characterization}

\subsection{Proof of Lemma~\ref{lem-characterization}}

The following uses similar techniques to those of \citet[][Section~5.2]{lindsay1995mixture}, which contains a subset of our result in the homoscedastic case.

\begin{proof}[Proof of Lemma~\ref{lem-characterization}] By convexity, the first-order optimality condition for $\npmle$ is 
\begin{align*}\label{eq-first-order-cvx}
D(\npmle, \aprior)\le 0 \text{ for all } \aprior\in \Ps(\R^\p)
\end{align*}
where 
\begin{align*}
D(\npmle, \aprior)
&\coloneqq \lim_{\alpha\downarrow 0}\frac{\frac{1}{n}\sum_{i=1}^n [\log f_{(1-\alpha)\npmle + \alpha \aprior, \Sigma_i}(X_i) - \log f_{\npmle, \Sigma_i}(X_i)]}{\alpha} \\
&=\frac{1}{n}\sum_{i=1}^n \frac{1}{f_{\npmle, \Sigma_i}(X_i)}\left(f_{\aprior, \Sigma_i}(X_i)- f_{\npmle, \Sigma_i}(X_i)\right)
=\frac{1}{n}\sum_{i=1}^n \frac{f_{\aprior, \Sigma_i}(X_i)}{f_{\npmle, \Sigma_i}(X_i)} - 1
\end{align*}
When $\aprior = \delta_{\vartheta}$ is a point mass we write $D(\npmle, \vartheta)$ instead of $D(\npmle, \aprior)$. It suffices to check $D(\npmle, \vartheta)\le 0$ for all $\vartheta\in\R^\p$ because $D(\npmle, \aprior) = \int D(\npmle,\vartheta)\diff G[\vartheta]$.

For the first part of the Lemma, define $\cc \coloneqq \left\{(f_{\aprior, \Sigma_i}(X_i))_{i=1}^n : \aprior\in \Ps(\R^\p)\right\}\cup\{0\}$. Observe that
\[
\cc = \conv\left(\cl\right), \text{ where } \cl\coloneqq \left\{(\varphi_{\Sigma_i}(X_i - \vartheta))_{i=1}^n : \vartheta\in \R^\p\right\} \cup \{0\}.
\]
Since $\vartheta \mapsto (\varphi_{\Sigma_i}(X_i - \vartheta))_{i=1}^n$ is continuous and $\lim_{\|\vartheta\|_2\to \infty}(\varphi_{\Sigma_i}(X_i - \vartheta))_{i=1}^n = 0$, the set $\cl$ is closed, and by boundedness of the Gaussian likelihood, $\cl$ is compact. Hence $\cc\subset \R^n$ is convex and compact, and $f(L) = \frac{1}{n}\sum_{i=1}^n \log L_i$ is strictly concave \blue{and coordinate-wise monotone} over $\cc$. Thus, $f$ attains its maximum at a unique (non-zero) boundary point~$\hat{L}\in \partial\cc$. Observe $\cc = \conv\left(\left\{(\varphi_{\Sigma_i}(X_i - \vartheta))_{i=1}^n : \vartheta\in \R^\p\right\}\cup \{0\}\right)$: by Carath\'eodory's theorem, any boundary point $\hat{L}\in \partial\cc$ can be written as $\hat{L}_i = \sum_{j=1}^{\khat}\hat{w}_j\varphi_{\Sigma_i}(X_i - \hat{a}_j)$ for some $\khat\le n$. 

Suppose $B\subset \mathrm{supp}(\npmle)$ is contained in the support of the NPMLE. Given $\npmle(B) > 0$, define a new probability measure $\npmle^B$ via $\npmle^B(A) \coloneqq \frac{\npmle(A\cap B)}{\npmle(B)}$. Since $\npmle = \alpha_0\npmle^B + (1-\alpha_0)\npmle^{B^\mathsf{c}}$ for $\alpha_0 = \npmle(B)$, the mixture
\[
\aprior_\alpha 
= (1-\alpha)\npmle  + \alpha \npmle^B
\]
remains a valid probability measure for $\alpha \ge -\frac{\alpha_0}{1-\alpha_0}$. Since $\alpha = 0$ maximizes the log-likelihood of $\aprior_\alpha$ over a range $\alpha\in [-\frac{\alpha_0}{1-\alpha_0}, 1]$ including both negative and positive values, the derivative of the log-likelihood is zero at $\alpha=0$, i.e. \[0 = D(\npmle, \npmle^B) = \int D(\npmle,\vartheta)\diff \npmle^B[\vartheta],\] so $\npmle^B(\mathcal{Z}) = 1$ for all $B\subset \mathrm{supp}(\npmle)$ such that $\npmle(B) > 0$. This implies $\npmle(B\cap\mathcal{Z}) = \npmle(B)$ for all measurable $B$, from which we may conclude $\mathcal{Z}\supseteq \mathrm{supp}(\npmle)$. Finally, observe that 
\[D(\npmle, \vartheta)=\frac{1}{n}\sum_{i=1}^n \hat{L}_i^{-1}\varphi_{\Sigma_i}(X_i - \vartheta) - 1 =  \left(\frac{1}{n}\sum_{i=1}^n\hat{L}_i^{-1}\right)\widehat\psi_n(\vartheta)-1,\] 
so $D(\npmle, \vartheta)\le 0$ is equivalent to $\widehat\psi_n(\vartheta)\le  \left(\frac{1}{n}\sum_{i=1}^n\hat{L}_i^{-1}\right)^{-1}$. This proves the last statement of the Lemma, that $\mathcal{Z}$ is equal to the set of global maximizers of $\widehat\psi_n$.
\end{proof}

\blue{
\begin{remark}\label{rmk-characterization} Inspecting the proof, we can extend the characterization of Lemma~\ref{lem-characterization} to a much broader class of heteroscedastic likelihoods. Suppose $\zeta_1,\ldots,\zeta_n$ are known, strictly positive, bounded and continuous densities on $\R^d$, and let $f_{G, i}(x) = \int \zeta_i(x-\vartheta)\text{d}G[\vartheta]$. Then the structural results of Lemma~\ref{lem-characterization} apply to the NPMLE 
\[\npmle
 = \argmin_{G\in \cp(\R^d)} \frac{1}{n}\sum_{i=1}^n \log f_{G,i}(X_i)\]
 as long as $\lim_{\|\vartheta\|_2\to \infty}(\zeta_{i}(X_i - \vartheta))_{i=1}^n = 0$.
\end{remark}
}

\subsection{Proof of Lemma~\ref{rem-nonunique}}

\begin{proof}[Proof of Lemma~\ref{rem-nonunique}] By Lemma~\ref{lem-invariance}, the fitted values $\hat{L}_1 = \hat{L}_2 = \hat{L}_3$ are equal. By Lemma~\ref{lem-characterization}, the atoms of $\npmle$ occur at the global modes of $\widehat\psi_n = f_{H, \sigma^2I_2}$, where $H = \frac{1}{3}\sum_{i=1}^3 \delta_{X_i}$. Since $\hat{L}_1=\hat{L}_2 = \hat{L}_3$, the fitted values are also equal to the global maximum of $\widehat\psi_n$, i.e. 
\[\hat{L}_i 
= \max_xf_{H,\sigma^2I_2}(x)
= \frac{2^{2/3}\log 2}{3\pi}
\]
for each $i=1,2,3$. Note that $\hat{L}_i = f_{\delta_0, \sigma^2I_2}(X_i)$ for all $X_i$, so $\npmle = \delta_0$ is an NPMLE. Now let $\npmle' = \frac{1}{3}\sum_{i=1}^n\delta_{X_i/2}$. It suffices to check the fitted values of $\npmle'$ at the observations. For $i=1$,
\begin{align*}
f_{\npmle', \sigma^2I_2}(X_1) 
&= \frac{1}{3}\sum_{i=1}^3\varphi_{\sigma^2I_2}(X_1 - X_i/2)  \\
&= \frac{4\log 2}{9\pi}(2^{-(4/3)(1/4)}+ 2^{-(4/3)(7/4)} + 2^{-(4/3)(7/4)}) \\
& =\frac{2^{2/3}\log 2}{3\pi} = \hat{L}_1.
\end{align*}
Similarly, for $i=2$, 
\begin{align*}
f_{\npmle', \sigma^2I_2}(X_2) 
&= \frac{1}{3}\sum_{i=1}^3\varphi_{\sigma^2I_2}(X_2 - X_i/2) \\
&= \frac{4\log 2}{9\pi}(2^{-(4/3)(7/4)}+ 2^{-(4/3)(1/4)} + 2^{-(4/3)(7/4)}) \\
&=\frac{2^{2/3}\log 2}{3\pi} = \hat{L}_2,
\end{align*}
and, for $i=3$,
\begin{align*}
f_{\npmle', \sigma^2I_2}(X_3) 
&= \frac{1}{3}\sum_{i=1}^3\varphi_{\sigma^2I_2}(X_3 - X_i/2) \\
&= \frac{4\log 2}{9\pi}(2^{-(4/3)(7/4)}+ 2^{-(4/3)(7/4)} + 2^{-(4/3)(1/4)}) \\
&=\frac{2^{2/3}\log 2}{3\pi} = \hat{L}_3.
\end{align*}
This verifies that $\npmle' = \frac{1}{3}\sum_{i=1}^n\delta_{X_i/2}$ is also an NPMLE, so every convex combination $\alpha\npmle + (1-\alpha)\npmle'$ is an NPMLE.
\end{proof}

\blue{
\subsection{Proof of Lemma~\ref{lem-ridgeline}}

\begin{proof}[Proof of Lemma~\ref{lem-ridgeline}] The fact that $\cz\subset \cm$ follows \citet[][Theorem 1]{ray2005topography}. Observe that $\cm$ is compact as it is the continuous image of the simplex, a compact set. 

For the last claim, let $p = x^*(\alpha)$, and let $Y_i = \Sigma_i^{-1}(X_i-p)$. Note $p=x^*(\alpha)$ implies $0 = \sum_{i=1}^n \alpha_iY_i$:
\[
p = x^*(\alpha)
\iff \sum_{i=1}^n \alpha_i\Sigma_i^{-1}p = \sum_{i=1}^n \alpha_i\Sigma_i^{-1}X_i
\iff 0 = \sum_{i=1}^n \alpha_iY_i.
\]
By Carath\'eodory's theorem, there is some $\alpha'\in \Delta_{n-1}$ with at most $d+1$ nonzeros such that $0=\sum_{i=1}^n \alpha_i'Y_i$. Rearranging, we find that $p = x^*(\alpha')$.
\end{proof}
}

\subsection{Proof of Proposition~\ref{prop-support}}

\begin{proof}[Proof of Proposition~\ref{prop-support}] In the proportional covariances case $\Sigma_i= c_i\Sigma$, we have 
\begin{align*}
x^*(\alpha)
&= \left(\sum_{i=1}^n \alpha_i\Sigma_i^{-1}\right)^{-1}\sum_{i=1}^n\alpha_i\Sigma_i^{-1}X_i \\
&= \sum_{i=1}^n\frac{\alpha_i/c_i}{\sum_{\iota=1}^n \alpha_\iota/c_\iota}X_i 
\end{align*}
As $\alpha$ ranges over the simplex, so does $\left(\frac{\alpha_i/c_i}{\sum_{\iota=1}^n \alpha_\iota/c_\iota}\right)_{i=1}^n$. Thus $\cm = \conv(\{X_1,\dots,X_n\})$, proving (i). If each~$\Sigma_i$ is diagonal, letting $x^*_j(\alpha)$ denote the $j^\text{th}$ coordinate of $x^*(\alpha)\in \R^\p$,
\[
x^*_j(\alpha) = \sum_{i=1}^n \frac{\alpha_i (\Sigma_i)_{jj}}{\sum_{i'=1}^n \alpha_{i'} (\Sigma_{i'})_{jj}}X_{ij} \in \left[\min_{i\in \{1, \dots, n\}} X_{ij}, \max_{i\in \{1, \dots, n\}} X_{ij}\right],
\]
proving (ii). For (iii), using concavity of the minimum eigenvalue,
\begin{align*}
\|x^*(\alpha)-x\|_2 
&= \left\|\left(\sum_{i=1}^n \alpha_i \Sigma_i^{-1}\right)^{-1}\sum_{i=1}^n \alpha_i \Sigma_i^{-1}(X_i-x)\right\|_2 \\
&\le \left\|\left(\sum_{i=1}^n \alpha_i \Sigma_i^{-1}\right)^{-1}\right\|_2 \left\|\sum_{i=1}^n \alpha_i \Sigma_i^{-1}(X_i-x)\right\|_2 \\
&\le \left(\sum_{i=1}^n \alpha_i \overline{k}^{-1}\right)^{-1} \sum_{i=1}^n \alpha_i\underline{k}^{-1}\left\|X_i-x\right\|_2 \le  \kappa r
\end{align*}
so $\M\subseteq \mathbb{B}_{\kappa r}(x)$.
\end{proof}

\subsection{Proof of Lemma~\ref{lem-invariance}}

\begin{proof}[Proof of Lemma~\ref{lem-invariance}] By the change of variables formula,
\begin{align*}
f_{T_\#\aprior, \Sigma_i'}(X_i') 
&= \int \varphi_{U_0\Sigma_iU_0^\mathsf{T}}(U_0X_i + x_0 - \theta)\diff T_\#\aprior(\theta) \\
&= \int \varphi_{U_0\Sigma_iU_0^\mathsf{T}}(U_0X_i + x_0 - T(\theta))\diff \aprior(\theta) \\
&= \int \varphi_{\Sigma_i}(X_i - \theta)\diff \aprior(\theta) = f_{\aprior, \Sigma_i}(X_i),
\end{align*}
completing the proof.
\end{proof}

\subsection{Proof of Proposition~\ref{prop-approximation}}

\begin{proof}[Proof of Proposition~\ref{prop-approximation}] Write $\npmle = \sum_{j=1}^{\khat} \hat{w}_j\delta_{\hat{a}_j}$, and for each $j\in[\khat]$, let $C_j\in \ch$ such that $\hat{a}_j\in C_j$. Next, define a positive measure $H_j$ supported on the corners of $C_j$ such that $H_j(C_j) = \hat{w}_j$ and 
\begin{align}\label{eq-moment-match-for-approx-npmle}
\int_{C_j}u\diff H_j(u) = \hat{w}_j\hat{a}_j = \int_{C_j}u\diff \npmle^j(u),
\end{align}
where $\npmle^j \coloneqq \hat{w}_j\delta_{\hat{a}_j}$. Now fix $u\in C_j$ and $i\in [n]$, and let $x_j = \Sigma_i^{-1/2}(X_i - \hat{a}_j)$ and $t = \Sigma_i^{-1/2}(u-\hat{a}_j)$. By the moment identity~\eqref{eq-moment-match-for-approx-npmle} and by~\citet[][A.27]{jiang2009general},
\begin{align*}
\int_{C_j} \varphi_{\Sigma_i}(X_i-u)\diff \npmle^j(u) &- \int_{C_j} \varphi_{\Sigma_i}(X_i-u)\diff H_j(u) \\
&\le \int_{C_j} \<x_j, t\>^2\varphi_{\Sigma_i}(X_i-u)\diff \npmle^j(u)
+\int_{C_j} \left(e^{\|t\|_2^2/2}-1\right)\varphi_{\Sigma_i}(X_i-u)\diff H_j(u) \\
&\le \underline{k}^{-2}D^2\p\delta^2\int_{C_j} \varphi_{\Sigma_i}(X_i-u)\diff \npmle^j(u)
+\left(e^{\underline{k}\p\delta^2/2}-1\right)\int_{C_j}\varphi_{\Sigma_i}(X_i-u)\diff H_j(u).
\end{align*}
Let $H = \sum_{j=1}^{\khat}H_j$. Summing the above inequality over~$j$, 
\[
f_{\npmle, \Sigma_i}(X_i) - f_{H, \Sigma_i}(X_i) 
\le \underline{k}^{-2}D^2\p\delta^2f_{\npmle, \Sigma_i}(X_i)
+\left(e^{\underline{k}\p\delta^2/2}-1\right)f_{H, \Sigma_i}(X_i).
\]
Since $H$ is supported on $\ca$, by optimality of $\npmle^\ca$, 
\[
\prod_{i=1}^n f_{\npmle^{\ca}, \Sigma_i}(X_i) \ge \prod_{i=1}^n f_{H, \Sigma_i}(X_i).
\]
Combining our findings,  
\[
\prod_{i=1}^n f_{\npmle^{\ca}, \Sigma_i}(X_i) 
\ge e^{-n\underline{k}^{-2}\p\delta^2/2}\left(1-\underline{k}^{-2}D^2\p\delta^2\right)^n\prod_{i=1}^n f_{\npmle, \Sigma_i}(X_i).
\]
Using the elementary inequality $1-x\ge e^{-2x}$ for $x\le 3/4$, we obtain 
\[
\prod_{i=1}^n f_{\npmle^{\ca}, \Sigma_i}(X_i) 
\ge \exp\left(-n\underline{k}^{-2}\p\delta^2/2 -2n\underline{k}^{-2}D^2\p\delta^2\right)\prod_{i=1}^n f_{\npmle, \Sigma_i}(X_i).
\]
for $\delta \le \sqrt{\frac{3}{4\p}}\underline{k}D^{-1}$.
\end{proof}

\section{Proof of Theorem \ref{thm-density-estimation}}\label{sec-proofs-density-estimation}

The following notation will be used throughout this section:
\begin{enumerate}
\item $\mathbb{B}_r(x) = \{y\in \R^\p : \|x-y\|_2\le r\}$ denotes a closed ball in $\R^\p$.
\item For a positive integer $m$, let $[m] = \{1, \dots, m\}$.
\item Given a pseudo-metric space $(M, \rho)$ and $\eps > 0$, let $N(\eps, M, \rho)$ denote the $\eps$-covering number, i.e. the smallest positive integer~$N$ such that there exist $x_1,\dots,x_N\in M$ such that~\[M\subset\bigcup_{i=1}^N\{y : \rho(y, x_i)\le \eps\}.\] 
Any such a set $\{x_i\}_{i=1}^N$ is known as an $\eps$-net or $\eps$-cover of $M$ under the pseudo-metric $\rho$. When~$M$ is a subset of Euclidean space we write $N(\eps, M)$ instead of $N(\eps, M, \|\cdot\|_2)$.
\item We use the shorthand $f_{\aprior, \bullet} = (f_{\aprior, \Sigma_i})_{i=1}^n$, the matrices $\Sigma_1,\dots, \Sigma_n$ being viewed as fixed. Let \[\F = \left\{f_{G, \bullet} : G\in \Ps(\R^\p)\right\}.\]
\item For $S\subset\R^\p$ and $M > 0$, let $S^M$ denote the $M$-enlargement $S^M = \{x\in \R^\p : \mathfrak{d}_S( x) \le M\}$. 
\item Define the semi-norm
\[
\|f_{G, \bullet} - f_{H, \bullet}\|_{\infty, S^M}
\coloneqq \max_{1\le i\le n} \sup_{x\in S^M} |f_{G, \Sigma_i}(x) - f_{H, \Sigma_i}(x)|.
\]
Similarly, define 
\[
\|f_{G, \bullet} - f_{H, \bullet}\|_{\nabla, S^M}
\coloneqq \max_{1\le i\le n} \sup_{x\in S^M} |\nabla f_{G, \Sigma_i}(x) - \nabla f_{H, \Sigma_i}(x)|.
\]
\end{enumerate}

Our proof generalizes and builds upon prior techniques for analyzing the Hellinger accuracy of the NPMLE \citep{zhang2009generalized, saha2020nonparametric, jiang2020general}. The basic structure of our argument is to recognize, given the approximation~\eqref{eq-hellinger-accuracy-approximate-npmle} in the likelihood, that we may trivially rewrite the large deviation probability for the NPMLE as a joint probability
\[
\P\bigg(\bar{h}(f_{\npmle, \bullet}, f_{\trueprior, \bullet}) \gtrsim_{\p, \overline{k}, \underline{k}}  t\eps_n\bigg)
= \P\bigg(\bar{h}(f_{\npmle, \bullet}, f_{\trueprior, \bullet}) \gtrsim_{\p, \overline{k}, \underline{k}}  t\eps_n, \prod_{i=1}^n \frac{f_{\npmle,\Sigma_i}(X_i)}{f_{\trueprior,\Sigma_i}(X_i)}\ge \exp\left(-c_{\p, \overline{k}, \underline{k}}n\eps_n^2\right)\bigg).
\]
If $\npmle$ were a fixed probability measure $\aprior_0$ such that $\bar{h}(f_{\aprior_0, \bullet}, f_{\trueprior, \bullet}) \gtrsim_{\p, \overline{k}, \underline{k}}  t\eps_n$, the right-hand side of the last display similarly simplifies as
\[
\P\bigg(\bar{h}(f_{\aprior_0, \bullet}, f_{\trueprior, \bullet}) \gtrsim_{\p, \overline{k}, \underline{k}}  t\eps_n, \prod_{i=1}^n \frac{f_{\aprior_0,\Sigma_i}(X_i)}{f_{\trueprior,\Sigma_i}(X_i)}\ge \exp\left(-c_{\p, \overline{k}, \underline{k}}n\eps_n^2\right)\bigg)
=\P\bigg(\prod_{i=1}^n \frac{f_{\aprior_0,\Sigma_i}(X_i)}{f_{\trueprior,\Sigma_i}(X_i)}\ge \exp\left(-c_{\p, \overline{k}, \underline{k}}n\eps_n^2\right)\bigg).
\]
Since $\npmle$ is not fixed, we first approximate it using a covering argument, and then bound the right-hand side of the previous display using Markov's inequality.

\begin{proof}[Proof of Theorem \ref{thm-density-estimation}]
Suppose for some $\gamma_n$ the NPMLE satisfies 
\[
\prod_{i=1}^n \frac{f_{\npmle,\Sigma_i}(X_i)}{f_{\trueprior,\Sigma_i}(X_i)}\ge \exp\left((\beta-\alpha)n\gamma_n^2\right) \mathrm{~for~some~} 0<\beta < \alpha < 1.
\]
We bound the probability
\[\P\bigg(\bar{h}(f_{\npmle, \bullet}, f_{\trueprior, \bullet}) \ge  t\gamma_n\bigg)\]
for $t > 1$. 

Take $\{f_{H_j, \bullet}\}_{j=1}^N\subset \F$ to be an $\eta$-net of $\F$ under $\|\cdot\|_{\infty, S^M}$. For each $j$, let $H_{0,j}$ be a distribution satisfying
\[
\|f_{H_{0,j}, \bullet} - f_{H_j, \bullet}\|_{\infty, S^M}\le \eta 
\text{ and }
\bar{h}(f_{H_{0,j}, \bullet}, f_{\trueprior, \bullet})\ge t\gamma_n
\]
and $J = \{j\in [N] : H_{0,j}\text{ exists}\}$. By construction of the $\eta$-net, there is $j^*\in [N]$ such that \[\|f_{H_{j^*}, \bullet} - f_{\npmle, \bullet}\|_{\infty, S^M}\le \eta.\] On the event $\{\bar{h}(f_{\npmle, \bullet}, f_{\trueprior, \bullet}) \ge  t\gamma_n\}$, the NPMLE $\npmle$ acts as a witness that $j^*\in J$, so by the triangle inequality
\begin{align}
\|f_{H_{0,j^*}, \bullet} - f_{\npmle, \bullet}\|_{\infty, S^M}\le 2\eta.
\end{align}
This gives
\[
f_{\npmle, \Sigma_i}(x)
\le \begin{cases}
f_{H_{0,j^*},\Sigma_i}(x) + 2\eta, & \text{if } x\in S^M \\
\frac{1}{\sqrt{(2\pi)^\p |\Sigma_i|}}, & \text{otherwise}.
\end{cases}
\]
Defining $v(x) = \eta 1_{x\in S^M} + \eta\left(\frac{M}{\mathfrak{d}_S( x)}\right)^{\p+1}1_{x\not\in S^M}$, we have 
\begin{align}
\exp((\beta-\alpha)nt^2\gamma_n^2) 
\le \max_{j\in J}\left[\prod_{i=1}^n \frac{f_{H_{0,j},\Sigma_i}(X_i) + 2v(X_i)}{f_{\trueprior,\Sigma_i}(X_i)}\right] \cdot \left[\prod_{i : X_i\not\in S^M}\frac{1}{\sqrt{(2\pi)^\p |\Sigma_i|} \cdot 2v(X_i)}\right]
\end{align}
on the event $\{\bar{h}(f_{\npmle, \bullet}, f_{\trueprior, \bullet}) \ge  t\gamma_n\}$. Hence
\begin{align}
\P&\bigg(\bar{h}(f_{\npmle, \bullet}, f_{\trueprior, \bullet}) \ge  t\gamma_n\bigg) \\
\label{eq-term1}
&\le \P\bigg(\max_{j\in J}\prod_{i=1}^n \frac{f_{H_{0,j},\Sigma_i}(X_i) + 2v(X_i)}{f_{\trueprior,\Sigma_i}(X_i)}\ge \exp(-\alpha nt^2\gamma_n^2) \bigg) \\
\label{eq-term2}
&~~~+ \P\bigg(\prod_{i : X_i\not\in S^M}\frac{1}{\sqrt{(2\pi)^\p |\Sigma_i|} \cdot 2v(X_i)}\ge \exp(\beta nt^2\gamma_n^2) \bigg) 
\end{align}
By a union bound and Markov's inequality, the first term \eqref{eq-term1} is bounded by
\begin{align}
e^{\alpha nt^2\gamma_n^2/2}\sum_{j\in J}\E \prod_{i=1}^n \sqrt{\frac{f_{H_{0,j},\Sigma_i}(X_i) + 2v(X_i)}{f_{\trueprior,\Sigma_i}(X_i)}}
\end{align}
Writing out the expectation, 
\begin{align*}
\prod_{i=1}^n \E \sqrt{\frac{f_{H_{0,j},\Sigma_i}(X_i) + 2v(X_i)}{f_{\trueprior,\Sigma_i}(X_i)}} 
&= \exp\left(\sum_{i=1}^n \log\E \sqrt{\frac{f_{H_{0,j},\Sigma_i}(X_i) + 2v(X_i)}{f_{\trueprior,\Sigma_i}(X_i)}} \right) \\
&\le \exp\left(\sum_{i=1}^n\left\{\int \sqrt{f_{H_{0,j},\Sigma_i} + 2v}\sqrt{f_{\trueprior, \Sigma_i}}  - 1\right\}\right) \\
&\le \exp\left(-\frac{nt^2\gamma_n^2}{2}+n\sqrt{2\int v}\right) 
\end{align*}
Putting together the pieces, the first term \eqref{eq-term1} is bounded by
\begin{equation}
\begin{aligned}
\P&\bigg(\max_{j\in J}\prod_{i=1}^n \frac{f_{H_{0,j},\Sigma_i}(X_i) + 2v(X_i)}{f_{\trueprior,\Sigma_i}(X_i)}\ge e^{-\alpha nt^2\gamma_n^2} \bigg) \\
&\le \exp\left(-\left(1-\alpha\right)\frac{nt^2\gamma_n^2}{2} + \log N+n\sqrt{2\int v}\right)
\end{aligned}
\end{equation}
For the second term \eqref{eq-term2}, observe by Markov's inequality
\begin{align*}
\P&\bigg(\prod_{i : X_i\not\in S^M}\frac{1}{\sqrt{(2\pi)^\p |\Sigma_i|} \cdot 2v(X_i)}\ge \exp(\beta nt^2\gamma_n^2) \bigg) \\
&\le \exp\left(-\frac{\beta nt^2\gamma_n^2}{2\log n}\right) \E\left\{\prod_{i : X_i\not\in S^M}\left|\frac{1}{\sqrt{|2\pi\Sigma_i|} \cdot 2v(X_i)}\right|\right\}^{1/2\log n} \\
&= \exp\left(-\frac{\beta nt^2\gamma_n^2}{2\log n}\right) \E\left\{\prod_{i =1}^n\left(\frac{\mathfrak{d}_S( X_i)}{|2\pi\Sigma_i|^{1/(2\p+2)} \cdot (2\eta)^{1/(\p+1)}M}\right)^{1_{\mathfrak{d}_S( X_i)\ge M)}}\right\}^{(\p+1)/2\log n} 
\end{align*}
To reduce clutter write $a = \frac{1}{\underline{k}^{\p/(2\p+2)}\eta^{1/(\p+1)} M}$ and $\lambda = \frac{\p+1}{2\log n}$. The above expectation is further upper bounded by
\begin{align*}
\E\left\{\prod_{i=1}^n\left(a\mathfrak{d}_S( X_i)\right)^{1_{\mathfrak{d}_S( X_i)\ge M}}\right\}^{\lambda}
&= \prod_{i=1}^n\E\left(a\mathfrak{d}_S( X_i)\right)^{\lambda 1_{\mathfrak{d}_S( X_i)\ge M}} \\
&\le \prod_{i=1}^n\left(1+a^\lambda \E\left[\mathfrak{d}_S( X_i)^{\lambda} 1_{\mathfrak{d}_S( X_i)\ge M}\right]\right) \\
&\le \exp\left(a^\lambda \sum_{i=1}^n\E\left[\mathfrak{d}_S( X_i)^{\lambda} 1_{\mathfrak{d}_S( X_i)\ge M}\right]\right) \\
&\le \exp\left(na^\lambda \left\{C_{\p}M^{\p+\lambda-2} \overline{k}^{1-\p/2}e^{-M^2/(8\overline{k})} + M^\lambda \left(\frac{2\mu_q}{M}\right)^q\right\}\right)
\end{align*}
The last inequality follows from Lemma \ref{lem-lip}. Note we need 
\[
\frac{\p+1}{2(1\land q)} \le \log n,
\]
to ensure $\lambda \le 1\land q$. Taking $M \ge \sqrt{8\overline{k}\log n}$, we have $e^{-M^2/(8\overline{k})} \le \frac{1}{n}$, so 
\begin{align*}
\E\left\{\prod_{i=1}^n\left(a\mathfrak{d}_S( X_i)\right)^{1_{\mathfrak{d}_S( X_i)\ge M}}\right\}^{\lambda}
&\le \exp\left((aM)^\lambda\left[C_{\p}M^{\p-2} \overline{k}^{1-\p/2} + n\left(\frac{2\mu_q}{M}\right)^q\right]\right)
\end{align*}
Noting $(aM)^\lambda = \left(\underline{k}^{\p/2}\eta\right)^{-1/(2\log n)}$, choose $\eta = \frac{n^{-2}}{\underline{k}^{\p/2}}$, so $(aM)^\lambda = e$. We directly apply \citet[][Suppl. Lemma A.7]{saha2020nonparametric} for the integral 
\[\int v \le C_{\p} \eta\mathrm{Vol}(S^M).\] To bound the metric entropy, i.e. $\log N$ where $N$ denotes the size of our $\eta$-net $\{f_{H_j, \bullet}\}_{j=1}^N\subset \F$, we apply Lemma~\ref{lem-entropy} 
\[\log N
=\log N\left(\eta, \F, \|\cdot\|_{\infty, S^M}\right)
\le C_{\p} N\left(u, (S^M)^{u}\right) \left(\log\frac{c_{\p,\overline{k}, \underline{k}}}{\eta}\right)^2,\] 
where the scalar $u$ in the above display corresponds to $a$ used in the lemma. Assuming $4n\ge (2\pi)^{\p/2}$,
\[
u
= \sqrt{-2\overline{k}\log \left(((2\pi \underline{k})^{\p/2} \frac{\eta}{4}\right)}
\ge \sqrt{2\overline{k}\log n}
\]
Similarly $u\le \sqrt{6\overline{k}\log n}$, so
\[
N\left(u, (S^M)^u\right)
\le N\left(\sqrt{2\overline{k}\log n}, (S^M)^{\sqrt{6\overline{k}\log n}}\right)
\le C_{\p, \overline{k}}\mathrm{Vol}(S^{2M})(\log n)^{-\p/2}
\]
Combining our findings,
\begin{align*}
\P&\bigg(\bar{h}(f_{\npmle, \bullet}, f_{\trueprior, \bullet}) \ge  t\gamma_n\bigg) \\
&\le \exp\left(-\left(1-\alpha\right)\frac{nt^2\gamma_n^2}{2} + C_{\p,\overline{k}, \underline{k}} (\log n)^{\p/2+1}\mathrm{Vol}(S^{2M}) +C_{\p}\sqrt{\underline{k}^{-\p/2}\mathrm{Vol}(S^M)}\right) \\
&~~~+ \exp\left(-\frac{\beta}{\log n}\frac{nt^2\gamma_n^2}{2} + C_{\p}M^{\p-2} \overline{k}^{1-\p/2} + en \inf_{q\ge (\p+1)/(2\log n)}\left(\frac{2\mu_q}{M}\right)^q\right)
\end{align*}
for any $t > 1$. Absorbing the dependence on $d, \underline{k}$ and $\overline{k}$ into constants, take $\eps_n^2 = \eps_n^2(M, S, \trueprior)$ such that 
\begin{align*}
\max&\left\{(\log n)^{\p/2+1}\mathrm{Vol}(S^{2M}), \sqrt{\mathrm{Vol}(S^M)}, M^{\p-2} , en \inf_{q\ge (\p+1)/(2\log n)}\left(\frac{2\mu_q}{M}\right)^q\right\}  \\
&\lesssim_{\p,\overline{k}, \underline{k}}n\eps_n^2(M, S, \trueprior)
\end{align*}
If we then take $\gamma_n^2  = \frac{C_{\p,\overline{k}, \underline{k}}\eps_n^2(M, S, \trueprior)}{4\min(1-\alpha,\beta)}$,
\[
\P\bigg(\bar{h}(f_{\npmle, \bullet}, f_{\trueprior, \bullet}) \ge  t\gamma_n\bigg)
\le 2\exp\left(-\frac{(1-\alpha)\land \beta}{4\log n} nt^2\gamma_n^2\right) 
\]
This proves~\eqref{eq-hellinger-accuracy-whp}. To prove~\eqref{eq-hellinger-accuracy-expectation}, integrate the tail from~\eqref{eq-hellinger-accuracy-whp},
\begin{align*}
\E\frac{\bar{h}^2(f_{\npmle, \bullet}, f_{\trueprior, \bullet})}{\gamma_n^2}
&\le 1 + \int_1^\infty \P\bigg(\frac{\bar{h}^2(\npmle, \trueprior)}{\gamma_n^2} \ge s\bigg)\diff s \\
&\le 1 + \int_1^\infty 4tn^{-t^2}\diff t = 1+\frac{2}{n\log n}\le 3
\end{align*}
for $n > 1$, completing the proof.
\end{proof}

We now state and prove the lemmas needed in the proof of Theorem \ref{thm-density-estimation}.

\begin{lemma}\label{lem-lip} Let $\truevec\sim \trueprior$ and $Z\sim \mathcal{N}(0, I_\p)$ independently, and $Y = \truevec + \Sigma^{1/2} Z$, where $\underline{k}I_\p\preceq \Sigma\preceq \overline{k}I_\p$. Then
\[
\E\left[\mathfrak{d}_S(Y)^{\lambda} 1_{\mathfrak{d}_S(Y)\ge M}\right]
\le C_\p M^{\p+\lambda-2} \overline{k}^{1-\p/2}e^{-M^2/(8\overline{k})} + M^\lambda \left(\frac{2\mu_q}{M}\right)^q,
\]
for any $\lambda\in (0, 1\land q]$, where $\mu_q$ is the $q^\mathrm{th}$-moment of $\mathfrak{d}_S( \truevec)$ under $\truevec\sim \trueprior$.
\end{lemma}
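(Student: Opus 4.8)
The plan is to reduce everything to the fact that $\vartheta\mapsto\frak{d}_S(\vartheta)$ is $1$-Lipschitz. Writing $Y=\truevec+\Sigma^{1/2}Z$, this gives the signal-plus-noise bound $\frak{d}_S(Y)\le \frak{d}_S(\truevec)+\|\Sigma^{1/2}Z\|_2\le \frak{d}_S(\truevec)+\overline{k}^{1/2}\|Z\|_2$, in which, after conditioning on $\truevec$, the term $\frak{d}_S(\truevec)$ is a deterministic offset and $\|Z\|_2$ is a $\chi_\p$ noise magnitude; I would work conditionally on $\truevec$ and integrate over $\truevec\sim\trueprior$ only at the end.

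The main step is a dichotomy based on the elementary fact that on $\{\frak{d}_S(Y)\ge M\}$ at least one of $\frak{d}_S(\truevec)$, $\|\Sigma^{1/2}Z\|_2$ is at least $M/2$. In the noise-dominated case $\frak{d}_S(\truevec)\le M/2$, the event also forces $\|\Sigma^{1/2}Z\|_2\ge M/2$ and $\frak{d}_S(Y)\le\frak{d}_S(\truevec)+\|\Sigma^{1/2}Z\|_2\le 2\|\Sigma^{1/2}Z\|_2$, so the conditional expectation is at most $2^\lambda\,\E\big[\|\Sigma^{1/2}Z\|_2^\lambda\,1_{\|\Sigma^{1/2}Z\|_2\ge M/2}\big]$, a Gaussian-norm tail integral. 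With $R=\|Z\|_2$ (density proportional to $r^{\p-1}e^{-r^2/2}$) and $\tau=M/(2\overline{k}^{1/2})$, one integration by parts gives $\E[R^\lambda 1_{R\ge\tau}]\le C_\p\,\tau^{\p+\lambda-2}e^{-\tau^2/2}$ once $\tau^2\gtrsim_\p 1$; using $\|\Sigma^{1/2}Z\|_2\le\overline{k}^{1/2}R$, substituting $\tau$, and collecting the powers of $\overline{k}$ yields precisely the first term $C_\p\,M^{\p+\lambda-2}\overline{k}^{1-\p/2}e^{-M^2/(8\overline{k})}$ (the exponent $\tfrac18$ is $\tau^2/2$, and the polynomial prefactor survives the integration by parts). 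In the signal-dominated case $\frak{d}_S(\truevec)>M/2$, I would drop the indicator and bound the conditional expectation by $\E[(\frak{d}_S(\truevec)+\|\Sigma^{1/2}Z\|_2)^\lambda]\le\frak{d}_S(\truevec)^\lambda+(\p\overline{k})^{\lambda/2}$, using subadditivity of $x\mapsto x^\lambda$ (valid since $\lambda\le1$) and $\E\|\Sigma^{1/2}Z\|_2^\lambda\le(\mathrm{tr}\,\Sigma)^{\lambda/2}\le(\p\overline{k})^{\lambda/2}$.

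It then remains to integrate the signal-dominated bound over $\truevec\sim\trueprior$. The key move is the trade $\frak{d}_S(\truevec)^\lambda 1_{\frak{d}_S(\truevec)>M/2}\le(M/2)^{\lambda-q}\frak{d}_S(\truevec)^q$ (valid because $\lambda\le q$ and the indicator forces $\frak{d}_S(\truevec)>M/2$), which gives $\E[\frak{d}_S(\truevec)^\lambda 1_{\frak{d}_S(\truevec)>M/2}]\le(M/2)^{\lambda-q}\mu_q^q\le M^\lambda(2\mu_q/M)^q$; Markov's inequality bounds the leftover $(\p\overline{k})^{\lambda/2}\,\P(\frak{d}_S(\truevec)>M/2)$ by $(\p\overline{k})^{\lambda/2}(2\mu_q/M)^q$, and since $(\p\overline{k})^{\lambda/2}\le M^\lambda$ in the regime $M\gtrsim_\p\overline{k}^{1/2}$ in which the lemma is used (recall $M\ge\sqrt{10\overline{k}\log n}$ in Theorem~\ref{thm-density-estimation}), this also folds into the second term.

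The fiddliest point is matching the \emph{exact} shape of the first term --- the prefactor $M^{\p+\lambda-2}\overline{k}^{1-\p/2}$ and the constant $\tfrac18$ in the exponent --- since this dictates where each $\overline{k}^{1/2}$ and each $2$ go in the dichotomy and in the substitution $\tau=M/(2\overline{k}^{1/2})$; a cruder tail bound (e.g.\ splitting $e^{-r^2/2}=e^{-r^2/4}e^{-r^2/4}$) would degrade the exponent and is to be avoided. Everything else is routine bookkeeping, and the implicit mild requirement that $M$ exceed a $(\p,\overline{k})$-dependent constant --- needed both for the integration-by-parts estimate and for absorbing the cross term --- holds automatically wherever the lemma is invoked; any residual absolute constant can be reinstated by a marginal adjustment of the $M/2$ threshold.
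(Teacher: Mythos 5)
Your approach is essentially the same as the paper's: exploit the $1$-Lipschitz property of $\frak{d}_S$, split the contribution of the tail event between the Gaussian noise norm $\|\Sigma^{1/2}Z\|_2$ and the signal deviation $\frak{d}_S(\truevec)$, control the former by a $\chi_\p$-tail estimate, and the latter by a Markov/moment argument using $\lambda\le q$. The only substantive difference is the split itself. The paper uses the pointwise inequality
\[
\frak{d}_S(Y)^{\lambda} 1_{\frak{d}_S(Y)\ge M}
\le \big(2\|\Sigma^{1/2}Z\|_2\big)^{\lambda} 1_{2\|\Sigma^{1/2}Z\|_2\ge M} + \big(2\frak{d}_S(\truevec)\big)^{\lambda} 1_{2\frak{d}_S(\truevec)\ge M},
\]
which comes from $\frak{d}_S(Y)\le 2\max(\frak{d}_S(\truevec),\|\Sigma^{1/2}Z\|_2)$; on the sub-event where $\|\Sigma^{1/2}Z\|_2\ge\frak{d}_S(\truevec)>M/2$, the $\frak{d}_S(Y)^\lambda$ mass is charged to the noise term, not the signal term. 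Your dichotomy on $\{\frak{d}_S(\truevec)\lessgtr M/2\}$ is identical in the noise-dominated branch, but in the signal-dominated branch your use of subadditivity $(a+b)^\lambda\le a^\lambda+b^\lambda$ followed by dropping the indicator on $Z$ produces the cross term $(\p\overline{k})^{\lambda/2}\P(\frak{d}_S(\truevec)>M/2)$, which the paper's decomposition avoids entirely. This cross term forces your side condition $M\gtrsim\sqrt{\p\overline{k}}$ and an extra absolute factor (roughly $2$) on the second term. Your suggestion to recover the clean constant by a ``marginal adjustment of the $M/2$ threshold'' does not actually work: lowering the threshold to $M/c$ with $c<2$ degrades the exponent to $e^{-M^2(c-1)^2/(2c^2\overline{k})}$, worse than $e^{-M^2/(8\overline{k})}$ (the constant $1/8$ is exactly pinned down by the $M/2$ cut), while raising $c>2$ inflates the $2^q$ in $(2\mu_q/M)^q$; and when $\lambda$ is small (as in the paper's application, $\lambda=(\p+1)/(2\log n)$), the headroom $1-2^{-\lambda}$ available for absorption vanishes, so $(\p\overline{k})^{\lambda/2}\le M^\lambda(1-2^{-\lambda})$ cannot hold without $M$ diverging. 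For the statement as written, the max-decomposition is the right move; your argument is otherwise sound and matches the paper's Gaussian-tail and moment steps exactly.
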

\begin{proof} Since distance $\mathfrak{d}_S$ is $1$-Lipschitz,
\begin{align}\label{eq-lem-lip-basic}
\E\left[\mathfrak{d}_S(Y)^{\lambda} 1_{\mathfrak{d}_S(Y)\ge M}\right]
\le \E\left[(2\|\Sigma^{1/2}Z\|_2)^{\lambda} 1_{2\|\Sigma^{1/2}Z\|_2\ge M}\right] + \E\left[(2\mathfrak{d}_S( \truevec))^{\lambda} 1_{2\mathfrak{d}_S( \truevec)\ge M}\right]
\end{align}
For the first term on the RHS of \eqref{eq-lem-lip-basic},
\begin{align*}
\E\left[(2\|\Sigma^{1/2}Z\|_2)^{\lambda} 1_{2\|\Sigma^{1/2}Z\|_2\ge M}\right]
&\le M^\lambda\E\left[\left(\frac{\|\Sigma^{1/2}Z\|_2}{M/2}\right)^{\lambda} 1_{\|\Sigma^{1/2}Z\|_2\ge M/2}\right] \\
&\le 2M^{\lambda-1}\E\left[\|\Sigma^{1/2}Z\|_2 1_{\|\Sigma^{1/2}Z\|_2\ge M/2}\right] \\
&\le 2M^{\lambda-1}\overline{k}^{1/2}\E\left[\|Z\|_2 1_{\|Z\|_2 \ge M/(2\overline{k}^{1/2})}\right] \\
&\le 2C_{\p}M^{\lambda-1}\overline{k}^{1/2} \left(\frac{M}{\overline{k}^{1/2}}\right)^{\p-1}e^{-M^2/(8\overline{k})} \\
&=  C_{\p}M^{\p+\lambda-2} \overline{k}^{1-\p/2}e^{-M^2/(8\overline{k})} 
\end{align*}
The penultimate inequality uses $\|\Sigma^{1/2}Z\|_2 \le \overline{k}^{1/2}\|Z\|_2$, and the last inequality directly uses~\citet[][Suppl. Lemma A.6]{saha2020nonparametric}.

Since $\lambda < q$, applying H\"older to the second term on the RHS of \eqref{eq-lem-lip-basic} yields
\[
\E\left[(2\mathfrak{d}_S( \truevec))^{\lambda} 1_{2\mathfrak{d}_S( \truevec)\ge M}\right]
\le M^\lambda \left(\frac{2\mu_q}{M}\right)^q 
\]
\end{proof}

\begin{lemma} (Moment matching, part i) Let $G, H\in \Ps(\R^\p)$. Suppose $A\subset \R^\p$ is such that \[\mathbb{B}_a(x)\subseteq A\subseteq \mathbb{B}_{ca}(x)\] 
for some $c\ge 1$, and that
\[
\int_A \avec_1^{k_1}~\cdots~\avec_\p^{k_\p} dG(\avec)
=\int_A \avec_1^{k_1}~\cdots~\avec_\p^{k_\p} dH(\avec),
\text{ for } k_1,\dots,k_\p\in [2m+1],
\]
for some $m\ge 1$. Then
\[
\max_{1\le i\le n} \left|f_{G, \Sigma_i}(x) - f_{H, \Sigma_i}(x)\right|
\le \frac{1}{(2\pi \underline{k})^{\p/2}} \left(\frac{ec^2a^2}{2\overline{k}(m+1)}\right)^{m+1}  + \frac{e^{-a^2/(2\overline{k})}}{(2\pi \underline{k})^{\p/2}}.
\]
\end{lemma}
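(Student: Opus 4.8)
The plan is to write $f_{G,\Sigma_i}(x)-f_{H,\Sigma_i}(x)=\int\varphi_{\Sigma_i}(x-\theta)\,d(G-H)(\theta)$, split the integral over $A$ and over $A^{\mathsf{c}}$, and estimate the two pieces separately; every estimate will use only the spectral hypothesis on $\Sigma_i$, so the final bound is automatically uniform in $i$. From $\underline{k}I_\p\preceq\Sigma_i\preceq\overline{k}I_\p$ one gets $\overline{k}^{-1}I_\p\preceq\Sigma_i^{-1}\preceq\underline{k}^{-1}I_\p$ and $\det\Sigma_i\ge\underline{k}^{\p}$, hence the uniform pointwise bound $\varphi_{\Sigma_i}(y)\le(2\pi\underline{k})^{-\p/2}\exp(-\|y\|_2^2/(2\overline{k}))$.

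For the tail, since $\mathbb{B}_a(x)\subseteq A$ every $\theta\notin A$ has $\|x-\theta\|_2\ge a$, so $\varphi_{\Sigma_i}(x-\theta)\le(2\pi\underline{k})^{-\p/2}e^{-a^2/(2\overline{k})}$ on $A^{\mathsf{c}}$, and therefore $\bigl|\int_{A^{\mathsf{c}}}\varphi_{\Sigma_i}(x-\theta)\,d(G-H)(\theta)\bigr|\le(2\pi\underline{k})^{-\p/2}e^{-a^2/(2\overline{k})}\,(G(A^{\mathsf{c}})+H(A^{\mathsf{c}}))$, which produces the second term of the bound since $G,H$ are probability measures.

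The integral over $A$ is where moment matching enters. Set $s_i(\theta)=\tfrac12(\theta-x)^{\mathsf{T}}\Sigma_i^{-1}(\theta-x)\ge0$, so $\varphi_{\Sigma_i}(x-\theta)=(2\pi\det\Sigma_i)^{-1/2}e^{-s_i(\theta)}$, and let $P_i(\theta)=(2\pi\det\Sigma_i)^{-1/2}\sum_{\ell=0}^{m}(-s_i(\theta))^{\ell}/\ell!$ be the order-$m$ truncation of this Gaussian. Expanding the quadratic form and its powers shows $P_i$ is a polynomial in $\theta$ of total degree at most $2m$, so every monomial of $P_i$ has degree at most $2m\le 2m+1$ in each variable; the moment hypothesis then gives $\int_A P_i\,dG=\int_A P_i\,dH$, whence $\int_A\varphi_{\Sigma_i}(x-\theta)\,d(G-H)(\theta)=\int_A\bigl(\varphi_{\Sigma_i}(x-\theta)-P_i(\theta)\bigr)\,d(G-H)(\theta)$, which is at most $\sup_{\theta\in A}\bigl|\varphi_{\Sigma_i}(x-\theta)-P_i(\theta)\bigr|$ times $G(A)+H(A)$ in absolute value. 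To bound the supremum I use the elementary remainder estimate $\bigl|e^{-s}-\sum_{\ell=0}^{m}(-s)^{\ell}/\ell!\bigr|\le s^{m+1}/(m+1)!$, valid for all $s\ge0$ (Taylor's theorem with the integral remainder and $e^{-t}\le1$), together with $s_i(\theta)\le\tfrac12\underline{k}^{-1}\|\theta-x\|_2^2\le c^2a^2/(2\underline{k})$ for $\theta\in A\subseteq\mathbb{B}_{ca}(x)$, the prefactor bound $(2\pi\det\Sigma_i)^{-1/2}\le(2\pi\underline{k})^{-\p/2}$, and $(m+1)!\ge((m+1)/e)^{m+1}$; this gives the first displayed term. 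Adding the two pieces yields the lemma, with all constants depending only on $\p,\underline{k},\overline{k},c,m$ and not on $i$.

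I expect the only genuinely delicate step to be the degree accounting for $P_i$: one must check that expanding $\bigl((\theta-x)^{\mathsf{T}}\Sigma_i^{-1}(\theta-x)\bigr)^{\ell}$ for $\ell\le m$ into monomials of $\theta$ never produces an exponent outside the range over which the $A$-restricted moments of $G$ and $H$ are assumed to coincide --- which is exactly why the hypothesis is phrased with order $2m+1$. The Gaussian tail bound and the Taylor-remainder estimate are otherwise routine, and the uniformity over $i$ is free since no estimate uses $\Sigma_i$ beyond its spectral bounds.
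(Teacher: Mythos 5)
Your outline follows the paper's own argument almost exactly: split the integral over $A$ and $A^{\mathsf{c}}$, use the moment hypothesis to annihilate a degree-$2m$ polynomial piece of the Gaussian, bound the Taylor remainder on $A$, and use the Gaussian tail bound off $A$. The tail estimate $\varphi_{\Sigma_i}(y)\le(2\pi\underline{k})^{-\p/2}e^{-\|y\|_2^2/(2\overline{k})}$ and the remainder inequality $|e^{-s}-\sum_{\ell\le m}(-s)^\ell/\ell!|\le s^{m+1}/(m+1)!$ are both correct, and the degree bookkeeping for $P_i$ is the same (modest) reading of the hypothesis that the paper itself uses.

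There is, however, a concrete mismatch between what you derive and what the lemma asserts. Your bound on the exponent uses $\Sigma_i^{-1}\preceq\underline{k}^{-1}I_\p$, giving $s_i(\theta)\le c^2a^2/(2\underline{k})$, so the chain of estimates you describe produces the first term $\bigl(\tfrac{ec^2a^2}{2\underline{k}(m+1)}\bigr)^{m+1}$, not $\bigl(\tfrac{ec^2a^2}{2\overline{k}(m+1)}\bigr)^{m+1}$. Since $\underline{k}\le\overline{k}$, this is a strictly weaker bound, and the concluding claim ``this gives the first displayed term'' does not follow from the displayed steps; nothing in a Taylor expansion in $s_i$ lets you replace $\underline{k}$ by $\overline{k}$ there. (The paper's proof simply asserts the remainder bound with $\overline{k}$ without derivation; if you want the stated constant you must either supply an argument for it or settle for the $\underline{k}$ version that your computation actually proves, which would change the constants in the companion lemma where $m$ is chosen proportional to $a^2/\overline{k}$.) A secondary, minor point: bounding each piece by a supremum times $G(\cdot)+H(\cdot)$ costs a factor of up to $2$ in both terms relative to the stated bound; for the tail piece this is avoidable, since the two integrals are nonnegative and each at most $(2\pi\underline{k})^{-\p/2}e^{-a^2/(2\overline{k})}$, so their difference is bounded by that quantity without the factor $G(A^{\mathsf{c}})+H(A^{\mathsf{c}})$, while for the $A$-piece the paper carries the same slack, so this is not a substantive defect.
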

\begin{proof} For each $i\in [n]$, write
\[
f_{G,\Sigma_i}(x) - f_{H, \Sigma_i}(x) 
= \int_A\varphi_{\Sigma_i}(x-\avec) (dG(\avec) - dH(\avec))
+\int_{A^\mathsf{c}}\varphi_{\Sigma_i}(x-\avec) (dG(\avec) - dH(\avec))
\]
On $A^\mathsf{c}$, $\|x-\avec\|_2\ge a$, so \[\varphi_{\Sigma_i}(x-\avec) \le \frac{e^{-a^2/(2\overline{k})}}{(2\pi \underline{k})^{\p/2}}.\]
Write the pdf as $\varphi_{\Sigma_i}(z) = P_i(z) + R_i(z)$ where $P_i$ is a polynomial of degree $2m$ and the remainder $R_i$ satisfies
\[
|R_i(z)|
\le (2\pi \underline{k})^{-\p/2} \left(\frac{e\|z\|_2^2}{2\overline{k}(m+1)}\right)^{m+1}
\]
By hypothesis, $\int_AP_i(x-\avec) (\diff G(\avec) - \diff H(\avec))=0$, so
\begin{align*}
\left|\int_A\varphi_{\Sigma_i}(x-\avec) (\diff G(\avec) - \diff H(\avec))\right|
&\le \left|\int_AR_i(x-\avec) (\diff G(\avec) - \diff H(\avec))\right| \\
&\le \frac{1}{(2\pi \underline{k})^{\p/2}} \left(\frac{ec^2a^2}{2\overline{k}(m+1)}\right)^{m+1} 
\end{align*}
completing the proof.
\end{proof}

\begin{lemma}\label{lem-moment-match-ii} (Moment matching, part ii) For any $G\in\Ps(\R^\p)$, there is a discrete distribution $H$ supported on $S^a$ with at most
\[
l\coloneqq (2\lfloor13.5a^2/\overline{k}\rfloor+2)^\p N(a, S^a) + 1
\]
atoms such that 
\[
\|f_{G, \bullet} - f_{H, \bullet}\|_{\infty, S^a}
\le \left(1+\frac{1}{\sqrt{2\pi}}\right)(2\pi \underline{k})^{-\p/2} e^{-a^2/(2\overline{k})}.
\]
\end{lemma}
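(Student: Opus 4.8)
The plan is to construct $H$ from $G$ by a local moment‑matching argument --- exactly the mechanism of the preceding lemma (moment matching, part i), but arranged so that a single construction works simultaneously for every evaluation point $x \in S^a$. Set $m \coloneqq \lfloor 13.5\, a^2/\overline k\rfloor$, so that $m \asymp a^2/\overline k$; this is the degree we will match coordinatewise, and it is exactly large enough to drive a Taylor remainder below $e^{-a^2/(2\overline k)}$ while keeping the number of atoms per cell at $(2m+2)^\p$.

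First I would fix an $a$‑net of $S^a$ and pass to a Borel partition of a neighbourhood of $S$ into cells $A_1,\dots,A_N$, each of diameter a fixed multiple of $a$ and with $N \asymp N(a,S^a)$, together with one ``outer'' region $A_0$ every point of which lies at distance more than $a$ from $S^a$. On each cell $A_j$, $j\ge 1$, I would invoke the classical one‑dimensional Gauss‑quadrature fact --- a probability measure on a bounded interval agrees in its first $2m+1$ moments with a discrete measure supported in that interval having at most $m+1$ atoms --- applied successively in the $\p$ coordinates, to get a finitely supported $H_j$ with $H_j(A_j)=G(A_j)$, support inside $A_j$, and $\int_{A_j}\theta_1^{k_1}\cdots\theta_\p^{k_\p}\,dH_j=\int_{A_j}\theta_1^{k_1}\cdots\theta_\p^{k_\p}\,dG$ for all $k_1,\dots,k_\p\in[2m+1]$, using at most $(m+1)^\p\le(2m+2)^\p$ atoms. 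Collapsing the residual mass $G(A_0)$ onto a single atom $\delta_{z^*}$ with $z^*$ far from $S$ yields $H\coloneqq\sum_{j\ge1}H_j+G(A_0)\delta_{z^*}$, a discrete distribution with at most $(2m+2)^\p N(a,S^a)+1$ atoms.

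For the accuracy bound, fix $x\in S^a$ and $i\in[n]$ and write $f_{G,\Sigma_i}(x)-f_{H,\Sigma_i}(x)=\sum_{j\ge 0}\int\varphi_{\Sigma_i}(x-\theta)\,d(G_j-H_j)(\theta)$ with $G_j=G|_{A_j}$ and $H_0=G(A_0)\delta_{z^*}$. Expand $\theta\mapsto\varphi_{\Sigma_i}(x-\theta)=P_{i,x}(\theta)+R_{i,x}(\theta)$ about $\theta=x$, with $P_{i,x}$ the degree‑$2m$ truncation of the exponential series in $\tfrac12(x-\theta)^{\mathsf T}\Sigma_i^{-1}(x-\theta)$; then $P_{i,x}$ is a polynomial of total degree $\le 2m$ in $\theta$, so $\int_{A_j}P_{i,x}\,d(G_j-H_j)=0$ for every $j\ge1$ by the moment matching, and (by the remainder estimate already established in the preceding lemma) $|R_{i,x}(\theta)|\le(2\pi\underline k)^{-\p/2}\bigl(e\|x-\theta\|_2^2/(2\overline k(m+1))\bigr)^{m+1}$. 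Split $\{j\ge1\}$ into cells meeting $\mathbb{B}_{3a}(x)$ --- where $\|x-\theta\|_2\le 3a$ and the choice $m=\lfloor13.5\,a^2/\overline k\rfloor$ makes the remainder at most $\tfrac1{\sqrt{2\pi}}(2\pi\underline k)^{-\p/2}e^{-a^2/(2\overline k)}$ --- and cells disjoint from it, where (as on $A_0$ and at $z^*$) $\|x-\theta\|_2>a$ forces $\varphi_{\Sigma_i}(x-\theta)\le(2\pi\underline k)^{-\p/2}e^{-a^2/(2\overline k)}$ directly; since in each regime the combined $G$‑ and $H$‑mass is at most $1$, adding up gives $\bigl(1+\tfrac1{\sqrt{2\pi}}\bigr)(2\pi\underline k)^{-\p/2}e^{-a^2/(2\overline k)}$. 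Maximizing over $i$ and taking the supremum over $x\in S^a$ finishes.

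The main obstacle is the simultaneous calibration of the polynomial degree, the cell geometry, and the atom budget. On the one hand $m$ must be large enough (hence $\asymp a^2/\overline k$) that the Taylor remainder over the cells near $x$ is truly of order $e^{-a^2/(2\overline k)}$, which also forces the near cells to have radius a small enough absolute multiple of $a$ that $\|x-\theta\|_2^2/(m+1)$ stays below $\overline k$ by a fixed factor --- this is what pins the constant $13.5$; on the other hand the per‑cell atom count must stay at $(2m+2)^\p$, which is what forces the construction to be the one‑dimensional quadrature iterated coordinatewise and requires checking that the quadrature nodes never leave their cell. Finally, handling the residual mass $G(A_0)$ with a single atom --- rather than distributing it over additional cells, which would inflate the count --- is legitimate only because $A_0$ and $z^*$ lie at distance more than $a$ from every point of $S^a$, so they affect every evaluation on $S^a$ only at the $e^{-a^2/(2\overline k)}$ scale; getting this cleanly while still covering $S^a$ with only $O(N(a,S^a))$ cells is the point requiring the most care.
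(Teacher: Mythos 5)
Your construction is essentially the paper's own proof, which simply defers to Lemma~D.3 of the Saha--Guntuboyina supplement with the modified degree $m=\lfloor 27a^2/(2\overline{k})\rfloor=\lfloor 13.5\,a^2/\overline{k}\rfloor$: an $a$-net-based cell partition of $S^a$, coordinatewise Gaussian-quadrature moment matching of degree $O(a^2/\overline{k})$ within each cell, a single atom absorbing the residual mass, and then the polynomial-cancellation-plus-remainder bound of the preceding lemma for near cells combined with the Gaussian tail for far cells. Your reconstruction of that cited argument, including the calibration of $m$, the cell radius, and the per-cell atom budget, is correct and matches the intended proof.
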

\begin{proof} The idea is to choose $H$ to match moments, and then apply the previous lemma. The proof is identical to \citet[][Suppl. Lemma~D.3]{saha2020nonparametric}, except that we take $m \coloneqq \lfloor \frac{27a^2}{2\overline{k}}\rfloor$.
\end{proof}

\begin{lemma}\label{lem-entropy} There exists positive constants $C_{\p}$ and $c_{\p,\overline{k}, \underline{k}}$  depending on $\p, \overline{k}, \underline{k}$ alone such that for every compact set $S\subset\R^\p$, $M > 0$ and $\eta\in (0, e^{-1}\land 4(2\pi\underline{k})^{-\p/2}),$ we have
\begin{align}
\log N(\eta,\F,\|\cdot\|_{\infty,S})
\le C_{\p} N(a, S^a) \left(\log\frac{c_{\p,\overline{k}, \underline{k}}}{\eta}\right)^{\p+1}
\end{align}
\end{lemma}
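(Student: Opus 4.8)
The plan is to reduce $\F$ to a low-dimensional parametric family in two stages --- truncation plus moment matching, then a Lipschitz discretization of the parameters --- and to optimize the truncation radius $a$. Since $S\subseteq S^a$ gives $\|\cdot\|_{\infty,S}\le\|\cdot\|_{\infty,S^a}$ and hence $N(\eta,\F,\|\cdot\|_{\infty,S})\le N(\eta,\F,\|\cdot\|_{\infty,S^a})$, it suffices to build an $\eta$-net of $\F$ in the larger seminorm $\|\cdot\|_{\infty,S^a}$. Fix $a=a(\eta)$ by $a^2 = 2\overline{k}\log\!\big(c_0(2\pi\underline{k})^{-\p/2}/\eta\big)$, with $c_0$ a numerical constant chosen so the bound in Lemma~\ref{lem-moment-match-ii} is at most $\eta/2$ (the hypothesis $\eta<4(2\pi\underline{k})^{-\p/2}$ keeps $a$ real, and $\eta<e^{-1}$ keeps $\log(1/\eta)\ge 1$). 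By Lemma~\ref{lem-moment-match-ii} every $f_{G,\bullet}\in\F$ is within $\eta/2$ in $\|\cdot\|_{\infty,S^a}$ of some $f_{H,\bullet}$ with $H$ discrete, carried by at most $l := (2m+2)^\p N(a,S^a)+1$ atoms where $m=\lfloor 13.5\,a^2/\overline{k}\rfloor$; moreover --- and this is the point I would extract from the construction behind that lemma (a minimal $a$-cover of $S^a$ by $N(a,S^a)$ balls, then a tensor-product quadrature rule on each ball) --- at most $(2m+2)^\p$ of these atoms lie in each of the $N(a,S^a)$ \emph{fixed} balls $\mathbb{B}_{C_\p a}(x_k)$, with at most one extra atom free in $S^a$. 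So it remains to $(\eta/2)$-cover the family $\F_l$ of such structured mixtures.

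The second stage uses the elementary Lipschitz bound: for mixtures $H=\sum_j w_j\delta_{a_j}$, $H'=\sum_j w_j'\delta_{a_j'}$ over a common (possibly zero-padded) index set,
\[
\|f_{H,\bullet}-f_{H',\bullet}\|_{\infty,S^a}\le(2\pi\underline{k})^{-\p/2}\|w-w'\|_1 + L_0\max_j\|a_j-a_j'\|_2,
\]
where $L_0 = L_0(\p,\underline{k},\overline{k})$ bounds $\sup_z\|\nabla\varphi_{\Sigma_i}(z)\|_2$ uniformly in $i$ (using $|\Sigma_i|\ge\underline{k}^\p$, $\|\Sigma_i^{-1}z\|_2\le\underline{k}^{-1}\|z\|_2$, and the boundedness of $\|z\|_2 e^{-\|z\|_2^2/(2\overline{k})}$). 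Accordingly, a net for $\F_l$ is built from an $\varepsilon_1$-net of the simplex $\Delta_{l-1}$ in $\ell_1$ together with, for each of the $l$ atom-slots, an $\varepsilon_2$-net of the single fixed ball $\mathbb{B}_{C_\p a}(x_k)$ to which that slot is confined (an $\varepsilon_2$-net of $S^a$ for the one free slot), where $\varepsilon_1\asymp_{\p,\underline{k}}\eta$ and $\varepsilon_2\asymp_{\p,\underline{k},\overline{k}}\eta$ are chosen to make the displayed quantity $\le\eta/2$. The log-cardinality is then at most $l\log(3/\varepsilon_1) + l\,\p\log(3C_\p a/\varepsilon_2) + \log N(\varepsilon_2,S^a)\lesssim_{\p,\underline{k},\overline{k}} l\log(c/\eta)$: the essential economy is that one covers only $l$ atoms each inside a fixed $O_\p(a)$-ball --- costing $l\,\p\log(a/\varepsilon_2)$ --- rather than discretizing all of $S^a$ at scale $\varepsilon_2$ and covering the ensuing simplex, which would be polynomial in $1/\eta$. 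Finally $a^2/\overline{k}\asymp_{\p,\underline{k}}\log(c/\eta)$, so $m\lesssim_{\p,\underline{k}}\log(c/\eta)$, $(2m+2)^\p\lesssim_{\p,\underline{k}}(\log(c/\eta))^\p$, and $l\lesssim_\p(\log(c/\eta))^\p N(a,S^a)$; combining with the previous display,
\[
\log N(\eta,\F,\|\cdot\|_{\infty,S})\lesssim_{\p,\underline{k},\overline{k}} N(a,S^a)\,(\log(c/\eta))^{\p+1},
\]
which is the assertion (after absorbing constants into $C_\p$ and $c_{\p,\overline{k},\underline{k}}$).

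The one place that needs genuine care --- rather than bookkeeping --- is the interplay in the first stage between the truncation error and the cover size. Taking $a$ small makes the moment-matching error $\asymp e^{-a^2/(2\overline{k})}$ too large; taking $a$ large inflates $m\asymp a^2/\overline{k}$ and hence the atom count $l$; the choice $a^2\asymp\overline{k}\log(1/\eta)$ is forced, and it is exactly this choice that turns $(a^2/\overline{k})^\p$ into $(\log(1/\eta))^\p$ and produces the final exponent $\p+1$. Everything else --- the simplex bound $N(\varepsilon,\Delta_{l-1},\|\cdot\|_1)\le(3/\varepsilon)^l$, the Euclidean ball bound $N(\varepsilon,\mathbb{B}_r)\le(3r/\varepsilon)^\p$, the Lipschitz estimate above, and folding eigenvalue/dimension constants into $c$ --- is routine. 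I would also double-check that the locality claim I am using about the construction behind Lemma~\ref{lem-moment-match-ii} holds in the cited form (atoms in $N(a,S^a)$ balls of radius $O_\p(a)$); if that construction only guarantees support in $S^a$ without this clustering, one makes the clustering explicit by restricting $G$ to the Voronoi cells of the $a$-cover before quadrature, which is standard and does not change the count.
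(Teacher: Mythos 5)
Your proposal is essentially correct and follows the same two-stage plan as the paper: moment-match to a discrete measure with $l\asymp_\p (a^2/\overline{k})^\p N(a,S^a)$ atoms via Lemma~\ref{lem-moment-match-ii}, then cover the parameters of these structured mixtures, with the crucial tuning $a^2\asymp\overline{k}\log(1/\eta)$ forced by balancing truncation error against atom count. The one place where you diverge is the combinatorial accounting in the second stage, and you correctly flag the spot that needs care. Lemma~\ref{lem-moment-match-ii} as stated only guarantees $H$ is supported on $S^a$ with at most $l$ atoms; it does \emph{not} assert the clustering property (that the atoms sit, $(2m+2)^\p$ to a cell, inside $N(a,S^a)$ fixed $O_\p(a)$-balls). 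So your bound on the atom-location cover, as written, does rest on an unstated strengthening. Your fallback --- restricting $G$ to Voronoi cells of the $a$-cover before quadrature --- does repair this and is how the underlying construction (in the cited supplement) actually proceeds, so the argument closes. The paper's own counting sidesteps the locality claim altogether: it takes a single $\zeta$-net $\mathcal{C}$ of all of $S^a$ with $\zeta\asymp_{\p,\underline{k},\overline{k}}\eta$, counts the possible atom-configurations as $l$-subsets of $\mathcal{C}$ via $\binom{N(\zeta,S^a)}{l}\le\left(\frac{eN(\zeta,S^a)}{l}\right)^l$, and then uses $l\gtrsim N(a,S^a)$ together with the scaling bound $N(\zeta,S^a)/N(a,S^a)\lesssim(1+a/\zeta)^\p$ to see that the per-atom cost of the choice is only $\log$-sized. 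Both routes yield $\log N\lesssim l\cdot\log(c/\eta)\asymp N(a,S^a)(\log(c/\eta))^{\p+1}$; yours buys a slightly more transparent ``cover each slot in its own ball'' picture at the cost of needing the clustering property, while the paper's trades that for a standard binomial-coefficient bound and needs no extra structure from the moment-matching step.
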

\begin{proof} The idea here is to take $f_{G, \bullet}\in \F$ (induced by some $G\in \Ps(\R^\p)$), approximate $G$ by a discrete distribution $H$, and then further approximate that discrete distribution with another discrete distribution over a fixed set of atoms and weights. So let $G\in \Ps(\R^\p)$, and apply the previous Lemma to obtain a discrete distribution $H$ supported on $S^a$ with at most $l$ atoms such that
\[
\|f_{G, \bullet} - f_{H, \bullet}\|_{\infty, S^a}
\le \left(1+\frac{1}{\sqrt{2\pi}}\right)(2\pi \underline{k})^{-\p/2} e^{-a^2/(2\overline{k})}.
\]
Let $\mathcal{C}$ denote a minimal $\zeta$-net of $S^a$, and let $H'$ approximate each atom of $H$ with its closest element from $\mathcal{C}$. Writing $H= \sum_j w_j \delta_{a_j}$ and $H'= \sum w_j \delta_{b_j}$, we have
\begin{align*}
\|f_{H, \bullet} - f_{H', \bullet}\|_{\infty, S^a}
&= \max_{i\in[n]}\sup_{x\in S^a} |f_{H,\Sigma_i}(x) - f_{H',\Sigma_i}(x)|  \\
&\le \max_{i\in[n]}\sup_{x\in S^a} \sum_{j}w_j |\varphi_{\Sigma_i}(x-a_j) - \varphi_{\Sigma_i}(x-b_j)| \\
&\le \zeta\max_{i\in[n]}\sup_z \|\nabla\varphi_{\Sigma_i}(z)\|_2 
= \zeta\max_{i\in[n]}\sup_z \varphi_{\Sigma_i}(z)\|\Sigma_i^{-1}z\|_2 \\
&\le \zeta \underline{k}^{-1}(2\pi \underline{k})^{-\p/2} \max_{i\in[n]}\sup_t \exp\left(-t^2/2\overline{k}\right)t \\
&\le \zeta \underline{k}^{-1}(2\pi \underline{k})^{-\p/2} (\overline{k}/e)^{1/2} 
\end{align*}
Let $\mathcal{D}$ denote a minimal $\xi$-net of $\Delta^{l-1}$ in the $\ell_1$ norm, and approximate the weights $w$ by their closest element $v\in \mathcal{D}$. Writing $H'' = \sum_j v_j\delta_{b_j}$,
\begin{align*}
\|f_{H', \bullet} - f_{H'', \bullet}\|_{\infty, S^a}
&= \max_{i\in[n]}\sup_{x\in S^a} |f_{H',\Sigma_i}(x) - f_{H'',\Sigma_i}(x)|  \\
&\le \max_{i\in[n]}\sup_{x\in S^a} \sum_{j}|w_j - v_j| |\varphi_{\Sigma_i}(x-b_j)| \le (2\pi \underline{k})^{-\p/2} \xi.
\end{align*}
Applying triangle inequality to the past three displays, 
\begin{align}
\|f_{G, \bullet} - f_{H'', \bullet}\|_{\infty, S^a}
\le (2\pi \underline{k})^{-\p/2}\left[2 e^{-a^2/(2\overline{k})}
+ \zeta \underline{k}^{-1}(\overline{k}/e)^{1/2}+ \xi\right]
\end{align}
Letting $\xi = (2\pi \underline{k})^{\p/2} \frac{\eta}{4}$, $\zeta = \xi\underline{k}(\overline{k}/e)^{-1/2}$, and $a = \sqrt{2\overline{k}\log \xi^{-1}}$ yields $\|f_{G, \bullet} - f_{H'', \bullet}\|_{\infty, S^a}\le \eta$. In order to take $a$ as such we need $\xi < 1$, or equivalently $\eta < 4(2\pi\underline{k})^{-\p/2}$.

The number of possible $H''$ is 
\[
|\mathcal{C}|\cdot|\mathcal{D}| = N(\xi, \Delta^{l-1}) \binom{N(\zeta, S^a)}{l}
\le \left[\left(1+\frac{2}{\xi}\right)\frac{e N(\zeta, S^a)}{l}\right]^l 
\]
From the previous Lemma, $l \ge N(a, S^a)/\overline{k}^\p$, so 
\[
\frac{N(\zeta, S^a)}{l}
\le \overline{k}^\p\frac{N(\zeta, S^a)}{N(a, S^a)}
\le \overline{k}^\p\left(1+\frac{a}{\zeta}\right)^\p
= \overline{k}^\p\left(1+\kappa\frac{2}{\sqrt{e}\xi^{3/2}}\right)^\p
\le C_{\p} \frac{\overline{k}^{2\p}}{\underline{k}^{\p+3\p^2/4}}\left(\frac{1}{\eta}\right)^{3p/2}
\]
Thus, 
\[
\log N(\eta,\F,\|\cdot\|_{\infty,S})
\le C_{\p}N(a, S^a) \log \left(\frac{1}{\underline{k}^{\p/2}}\left(\frac{\overline{k}^{2\p}}{\underline{k}^{3(\p/2+1)}}\lor 1\right) \frac{e}{\eta}\right)^{\p+1}
\]
\end{proof}

\section{Proof of Theorem \ref{thm-denoising}}\label{sec-proofs-denoising}

Throughout the proof we will group sequences of the form $\avec_1,\dots,\avec_n$ into $n\times \p$ matrices $\theta$, so that, for instance, the regret $\frac{1}{n}\sum_{i=1}^n\E\|\estvec_i - \orvec_i\|_2^2$ in the statement of the theorem may be rewritten as the expected squared Frobenius norm $\frac{1}{n}\E\|\estvec - \orvec\|_F^2$, where $\|\avec\|_F^2 = \sum_{i=1}^n\sum_{j=1}^\p\avec_{ij}^2$. Additionally, we use the same notation introduced at the start of Appendix~\ref{sec-proofs-density-estimation}.

\subsection{Regularizing the Bayes rule}

In evaluating $\estvec$, an apparent difficulty is that the denominator in Tweedie's formula can be arbitrarily small. However, we show that the likelihood is lower bounded at each of the observations. \blue{By~\eqref{eq-approximate-npmle-2}, for any fixed $j\in [n]$,}
\begin{align*}
\prod_{i=1}^nf_{\npmle,\Sigma_i}(X_i) 
&\ge \prod_{i=1}^n f_{n^{-1} \delta_{X_i} + (1-n^{-1})\npmle, \Sigma_i}(X_i) \\
&\ge n^{-1}\varphi_{\Sigma_i}(0)(1-n^{-1})^{n-1}\prod_{i: i\ne j} f_{\npmle, \Sigma_i}(X_i).
\end{align*}
Cancelling terms for $i\in [n]\setminus \{j\}$, we conclude
\begin{align}\label{eq-npmle-obs-bound}
f_{\npmle,\Sigma_j}(X_j) 
&\ge \frac{1}{en\sqrt{|2\pi\Sigma_j|}}.
\end{align}
Given this, it is natural to define the regularized empirical Bayes and oracle Bayes rules 
\begin{align}
\estvec_{\rho, i} 
&= X_i + \Sigma_i\frac{\nabla f_{\npmle, \Sigma_i}(X_i)}{f_{\npmle, \Sigma_i}(X_i) \lor (\rho / \sqrt{|\Sigma_i|})} \\
\orvec_{\rho, i}
&= X_i + \Sigma_i\frac{\nabla f_{\trueprior, \Sigma_i}(X_i)}{f_{\trueprior, \Sigma_i}(X_i) \lor (\rho / \sqrt{|\Sigma_i|})}.
\end{align}
By the lower bound~\eqref{eq-npmle-obs-bound} we know that $\estvec_{\rho} = \estvec$ when $\rho \le \rho_0\coloneqq \frac{1}{en(2\pi)^{\p/2}}$. In particular,
\begin{align}\label{eq-conversion-to-regularized}
\|\estvec - \orvec\|_F
&= \|\estvec_\rho - \orvec\|_F 
\le \|\estvec_\rho - \orvec_\rho\|_F  + \|\orvec_\rho - \orvec\|_F.
\end{align}
The first term $\|\estvec_\rho - \orvec_\rho\|_F$ represents the regret between regularized rules, which prevents the denominator in Tweedie's formula from blowing up. The second term represents the cost of introducing a small amount of regularization in the oracle Bayes rule. 

\subsection{Regularization error of oracle Bayes}

Let us first consider the second term $\|\orvec_\rho - \orvec\|_F$ on the RHS of the bound~\eqref{eq-conversion-to-regularized}. Fixing $i\in [n]$, let $\trueprior_i$ denote the distribution of $\xi_i = \Sigma_i^{-1/2}\truevec_i$ where $\truevec_i\sim \trueprior$. Then we may write \blue{$X_i = \Sigma_i^{1/2} \tilde{X}_i$ where $\tilde{X}_i\sim f_{\trueprior_i, I_{\p}}$}. Note how the scale change affects the terms in Tweedie's formula:
\begin{equation}\label{eq-scaled-changes}
\begin{aligned}
f_{\trueprior, \Sigma_i}(X_i) 
&= \EE_{\vartheta_i\sim \trueprior}\left[\frac{1}{\sqrt{|2\pi\Sigma_i|}}\exp\left(-\frac{1}{2}(X_i-\vartheta_i)'\Sigma_i^{-1}(X_i-\vartheta_i)\right)\right] \\
&= \frac{1}{\sqrt{|\Sigma_i|}} \EE_{\xi_i\sim \trueprior_i}\left[\varphi_{I_{\p}}(\Sigma_i^{-1/2}X_i - \xi_i)\right] = \frac{1}{\sqrt{|\Sigma_i|}}f_{\trueprior_i, I_{\p}}(\blue{\tilde{X}_i}) \\
\nabla f_{\trueprior, \Sigma_i}(X_i) 
&= \EE_{\vartheta_i\sim \trueprior}\left[\Sigma_i^{-1}(\vartheta_i - X_i)\frac{1}{\sqrt{|2\pi\Sigma_i|}}\exp\left(-\frac{1}{2}(X_i-\vartheta_i)'\Sigma_i^{-1}(X_i-\vartheta_i)\right)\right] \\
&= \frac{1}{\sqrt{|\Sigma_i|}}\Sigma_i^{-1/2}\nabla f_{\trueprior_i, I_{\p}}(\blue{\tilde{X}_i}) 
\end{aligned}
\end{equation}

In particular, Tweedie's formula, even in its regularized form, is scale equivariant:
\begin{align}\label{eq-scaled-tweedie}
\orvec_{\rho, i} 
&= \Sigma_i^{1/2}\left(\blue{\tilde{X}_i} + \frac{\nabla f_{\trueprior_i, I_{\p}}(\blue{\tilde{X}_i})}{f_{\trueprior_i, I_{\p}}(\blue{\tilde{X}_i}) \lor \rho}\right) 
\end{align}
In this form, \citet[][Lemma 4.3]{saha2020nonparametric} directly applies. Specifically, defining 
\[
\Delta(\aprior, \rho) \coloneqq \int \left(1-\frac{f_{\aprior, I_{\p}}}{f_{\aprior, I_{\p}}\lor \rho}\right)^2\frac{\|\nabla f_{\aprior, I_{\p}}\|_2^2}{f_{\aprior, I_{\p}}},
\] 
for any $\rho \le \rho_0$ and for all compact sets $S_1, \dots, S_n\subset \R^\p$,
\begin{equation}
\begin{aligned}
\E\|\orvec_\rho - \orvec\|_F^2
&= \sum_{i=1}^n\E\|\orvec_{\rho, i} - \orvec_i\|_2^2 
\le \bar{k}\sum_{i=1}^n\Delta(\trueprior_i, \rho) \\
&\le \bar{k}\sum_{i=1}^n \left\{C_{\p} N\left(\frac{4}{L(\rho)}, S_i\right)L^\p(\rho)\rho + \p\trueprior_i(S_i^\mathsf{c})\right\},
\end{aligned}
\end{equation}
where $L(\rho) \coloneqq \sqrt{-\log((2\pi)^\p\rho^2)}$ and $N$ denotes the usual covering number in the Euclidean norm. Choosing $\rho = (2\pi)^{-\p/2}/n$ and $S_i = \Sigma_i^{-1/2}S^M$, 
\[
\E\|\orvec_\rho - \orvec\|_F^2
\le \bar{k}n \left\{C_{\p} N\left(\frac{4}{\sqrt{\log n}}, \Sigma_{i}^{-1/2}S^M\right)\frac{(\log n)^{\p/2}}{n} + \p\trueprior((S^M)^\mathsf{c})\right\}.
\]

Let $x_1,\dots, x_m$ denote a $t$-net of $S^M$. Let $y\in S_i$ and $x = \Sigma_i^{1/2}y$. There is some $j$ s.t. $\|x_j-x\|_2\le t$. Let $y_j = \Sigma_i^{-1/2}x_j$. Then 
\[t^2\ge(x_j-x)'(x_j-x) = (y_j-y)'\Sigma_i(y_j-y) \ge \underline{k}\|y_j-y\|_2^2\]
so $y_1,\dots,y_m$ is a $t/\underline{k}^{1/2}$-net of $S_i$. This shows $N(t/\underline{k}^{1/2}, S_i) \le N(t, S^M)$. By \citet[][Suppl. Lemma F.6]{saha2020nonparametric} and Markov's inequality,
\begin{align}\label{eq-regulization-err}
\E\|\orvec_\rho - \orvec\|_F^2
\le C_{\p}\bar{k}n \left\{\underline{k}^{-\p/2}\mathrm{Vol}\left(S^{1}\right)M^\p\frac{(\log n)^{\p}}{n} + \inf_{q\ge(\p+1)/2\log n}\left(\frac{2\mu_q}{M}\right)^q\right\}.
\end{align}

\subsection{Regret of regularized rules}

Now we consider the first term $\|\estvec_\rho - \orvec_\rho\|_F$ on the RHS of the bound~\eqref{eq-conversion-to-regularized}. First, we will introduce some additional notation. For $\delta > 0$ let $A_\delta = \left\{\bar{h}^2\left(f_{\npmle, \bullet}, f_{\trueprior, \bullet}\right)  \le \delta\right\}$. Given a compact set $S\subset \R^\p$, define another metric
\[
m^S(\aprior, \aprior')
\coloneqq \max_{i\in [n]}\sup_{x : \mathfrak{d}_S(x)\le M}\left\|\frac{\Sigma_i\nabla f_{\aprior, \Sigma_i}(x)}{f_{\aprior, \Sigma_i}(x) \lor (\rho / \sqrt{|\Sigma_i|})} - \frac{\Sigma_i\nabla f_{\aprior', \Sigma_i}(x)}{f_{\aprior', \Sigma_i}(x) \lor (\rho / \sqrt{|\Sigma_i|})}\right\|_2
\]
Let $\aprior^{(1)},\dots,\aprior^{(N)}$ denote a minimal $\eta^*$-covering of $\left\{G : \bar{h}^2(f_{G, \bullet}, f_{\trueprior, \bullet})  \le \delta\right\}$ in the metric $m^S$. For $j\in [N]$ similarly define an $n\times \p$ matrix $\estvec^{(j)}_\rho$ where the $i^\mathrm{th}$ row is given by $X_i + \Sigma_i\frac{\nabla f_{\aprior^{(j)}, \Sigma_i}(X_i)}{f_{\aprior^{(j)}, \Sigma_i}(X_i) \lor (\rho / \sqrt{|\Sigma_i|})}$. We bound the regret as $\|\estvec_\rho - \orvec_\rho\|_F \le \sum_{t=1}^4\zeta_t$, where
\begin{equation}
\begin{aligned}
\zeta_1 
&\coloneqq \|\estvec_\rho - \orvec_\rho\|_F 1_{A_\delta^\mathsf{c}} \\
\zeta_2
&\coloneqq \left(\|\estvec_\rho - \orvec_\rho\|_F - \max_{j\in [N]}\|\estvec_\rho^{(j)} - \orvec_\rho\|_F\right)_+ 1_{A_\delta} \\
\zeta_3
&\coloneqq \max_{j\in [N]}\left(\|\estvec_\rho^{(j)} - \orvec_\rho\|_F - \E\|\estvec_\rho^{(j)} - \orvec_\rho\|_F\right)_+ \\
\zeta_4
&\coloneqq \max_{j\in [N]}\E\|\estvec_\rho^{(j)} - \orvec_\rho\|_F
\end{aligned}
\end{equation}
We will control the second moment of each $\zeta_t$. Here's our rough overview. $\zeta_1$ uses Theorem~\ref{thm-density-estimation} to show the NPMLE places small probability on $A_\delta^\mathsf{c}$; $\zeta_2$ uses the fact that (on $A_\delta$) the cover $\{\aprior^{(j)}\}$ must have some element that is close to the NPMLE in $m^S$; $\zeta_3$ follows from Gaussian concentration of measure; and $\zeta_4$ bounds each expectation individually and uses closeness in Hellinger. 

\subsubsection{Bounding \texorpdfstring{$\E\zeta_1^2$}{the first term}}

By the scaled Tweedie's formula \eqref{eq-scaled-tweedie} 
\begin{align*}
\|\estvec_{\rho, i} - \orvec_{\rho, i}\|_2^2
&\le \bar{k} \left\|\frac{\nabla f_{\widehat{\aprior}_i, I_{\p}}(\blue{\tilde{X}_i})}{f_{\widehat{\aprior}_i, I_{\p}}(\blue{\tilde{X}_i}) \lor \rho} - \frac{\nabla f_{\trueprior_i, I_{\p}}(\blue{\tilde{X}_i})}{f_{\trueprior_i, I_{\p}}(\blue{\tilde{X}_i}) \lor \rho}\right\|_2^2 
\end{align*}
\citet[][Suppl. Lemma~F.1]{saha2020nonparametric} provides
\begin{align}
\E\zeta_1^2
\le 4 \bar{k} n \log\left(\frac{(2\pi)^\p}{\rho^2}\right) \PP(A_\delta^\mathsf{c}).
\end{align}
By Theorem~\ref{thm-density-estimation}, there is a constant $C_{\p,\underline{k}, \bar{k}} > 0$ such that $\delta = C_{\p,\underline{k}, \bar{k}}\eps_n^2(M, S, \trueprior)$ satisfies $\PP(A_\delta^\mathsf{c})\le 2/n$. Hence
\begin{align}\label{eq-zeta1}
\E\zeta_1^2
\le 48 \bar{k} \log\left(n\right).
\end{align}

\subsubsection{Bounding \texorpdfstring{$\E\zeta_2^2$}{the second term}}

Observe 
\begin{align*}
\zeta_2^2
&\le 1_{A_\delta}\min_{j\in [N]}\|\estvec_\rho - \estvec_\rho^{(j)}\|_F^2 \\
&= 1_{A_\delta} \min_{j\in [N]}\sum_{i=1}^n\left\|\frac{\Sigma_i\nabla f_{\npmle, \Sigma_i}(X_i)}{f_{\npmle, \Sigma_i}(X_i) \lor (\rho / \sqrt{|\Sigma_i|})} - \frac{\Sigma_i\nabla f_{\aprior^{(j)}, \Sigma_i}(X_i)}{f_{\aprior^{(j)}, \Sigma_i}(X_i) \lor (\rho / \sqrt{|\Sigma_i|})}\right\|_2^2
\end{align*}
On $A_\delta$, we may take $j$ such that $m^S(\npmle, \aprior^{(j)})\le \eta^*$. For each $i$, consider two cases, where $X_i\in S^M$ and where $X_i\not\in S^M$. When $X_i\in S^M$ bound the above $\|\cdot\|_2$ by the supremum over all $x\in S^M$. When $X_i\not\in S^M$ bound the regularized rules as before. This yields
\begin{align}
\zeta_2^2
&\le 1_{A_\delta}\left(\#\{i : X_i\in S^M\} (\eta^*)^2 + \#\{i : X_i\not\in S^M\}4\bar{k}\log\left(\frac{(2\pi)^\p}{\rho^2}\right)\right)
\end{align}
so in particular
\begin{align}
\E\zeta_2^2
&\le n(\eta^*)^2 + 4\bar{k}\log\left(\frac{(2\pi)^\p}{\rho^2}\right)\sum_{i=1}^n\PP\left(\mathfrak{d}_S(X_i)\ge M\right).
\end{align}
To bound the probabilities on the RHS, write $X_i = \avec_i+\Sigma^{1/2}Z_i$. By Lemma~\ref{lem-lip}, taking $\lambda\downarrow 0$,
\begin{align}\label{eq-zeta2}
\E\zeta_2^2/n
&\le (\eta^*)^2 + 4\bar{k}\log\left(\frac{(2\pi)^\p}{\rho^2}\right)\left(C_{\p}\frac{M^{\p-2}}{n}\bar{k}^{1-\p/2} + \inf_{q\ge(\p+1)/2\log n}\left(\frac{2\mu_q}{M}\right)^q\right).
\end{align}

\subsubsection{Bounding \texorpdfstring{$\E\zeta_3^2$}{the third term}}\label{sec-bounding-zeta3}

\blue{Fix $j\in [N]$. By the scaled Tweedie's formula \eqref{eq-scaled-tweedie},
\begin{align*}
\|\estvec_{\rho, i}^{(j)} - \orvec_{\rho, i}\|_2^2
&\le \bar{k} \left\|\frac{\nabla f_{\aprior_i^{(j)}, I_{\p}}(\tilde{X}_i)}{f_{\aprior_i^{(j)}, I_{\p}}(\tilde{X}_i) \lor \rho} - \frac{\nabla f_{\trueprior_i, I_{\p}}(\tilde{X}_i)}{f_{\trueprior_i, I_{\p}}(\tilde{X}_i) \lor \rho}\right\|_2^2 \\
&\le 2\bar{k}\left(\left\|\frac{\nabla f_{\aprior_i^{(j)}, I_{\p}}(\tilde{X}_i)}{f_{\aprior_i^{(j)}, I_{\p}}(\tilde{X}_i) \lor \rho}\right\|_2^2 + \left\|\frac{\nabla f_{\trueprior_i, I_{\p}}(\tilde{X}_i)}{f_{\trueprior_i, I_{\p}}(\tilde{X}_i) \lor \rho}\right\|_2^2\right) 
\end{align*}
By \citet[][Suppl. Lemma~F.1]{saha2020nonparametric}, 
\[
\|\estvec_{\rho, i}^{(j)} - \orvec_{\rho, i}\|_2
\le 2\bar{k}^{1/2}L(\rho).
\]
Let $V_i^{(j)} = \frac{\|\estvec_{\rho, i}^{(j)} - \orvec_{\rho, i}\|_2}{2\bar{k}^{1/2}L(\rho)}$, so $\|\estvec_{\rho, i}^{(j)} - \orvec_{\rho, i}\|_F = 2\bar{k}^{1/2}L(\rho)\|V^{(j)}\|_2$. Since $(V_1^{(j)}, \ldots, V_n^{(j)})$ are independent random variables in~$[0,1]$, it follows from Theorem~6.10 of \citet{boucheron2003concentration} that 
\[
\PP\left(\|V^{(j)}\|_2 \ge \EE \|V^{(j)}\|_2 + t\right)\le \exp\left(- t^2/2\right).
\]
Thus
\[
\PP\left(\zeta_3 \ge s\right)\le N\exp\left(- \frac{s^2}{8\bar{k}L^2(\rho)}\right).
\]
Integrating the tail gives
\begin{align}\label{eq-zeta3}
\E\zeta_3^2 \le 8\bar{k}L^2(\rho)\log(eN).
\end{align}
}

\subsubsection{Bounding \texorpdfstring{$\E\zeta_4^2$}{the final term}}

Again by the scaled Tweedie's formula \eqref{eq-scaled-tweedie},
\begin{align*}
\E\|\estvec_\rho^{(j)} - \orvec_\rho\|_F
&\le\sqrt{\E\|\estvec_{\rho}^{(j)} - \orvec_{\rho}\|_F^2}
\le \sqrt{\bar{k}\sum_{i=1}^n \E_{\blue{\tilde{X}_i}\sim f_{\trueprior_i, I_{\p}}}\left\|\frac{\nabla f_{\aprior^{(j)}_i, I_{\p}}(\blue{\tilde{X}_i})}{f_{\aprior^{(j)}_i, I_{\p}}(\blue{\tilde{X}_i}) \lor \rho} - \frac{\nabla f_{\trueprior_i, I_{\p}}(\blue{\tilde{X}_i})}{f_{\trueprior_i, I_{\p}}(\blue{\tilde{X}_i}) \lor \rho}\right\|_2^2}
\end{align*}
\citet[][Lemma E.1]{saha2020nonparametric} bounds the above expectation, yielding
\begin{align}\label{eq-e1-analogue}
\left(\E\|\estvec_\rho^{(j)} - \orvec_\rho\|_F\right)^2
&\le C_{\p}\bar{k}\sum_{i=1}^n \max\left\{\left(\frac{L^2(\rho)}{2}\right)^{3}, \left|\log h\left(f_{\trueprior_i, I_{\p}}, f_{\aprior^{(j)}_i, I_{\p}}\right)\right|\right\} h^2\left(f_{\trueprior_i, I_{\p}}, f_{\aprior^{(j)}_i, I_{\p}}\right).
\end{align}
By a change of variables, 
\[
h^2\left(f_{\trueprior_i, I_{\p}}, f_{\aprior^{(j)}_i, I_{\p}}\right)
= h^2\left(f_{\trueprior, \Sigma_i}, f_{\aprior^{(j)}, \Sigma_i}\right).
\]
Using the shorthand $h^2_i = h^2\left(f_{\trueprior, \Sigma_i}, f_{\aprior^{(j)}, \Sigma_i}\right)$ and using $\rho = (2\pi)^{-\p/2}/n$, 
\begin{equation}
\begin{aligned}
\left(\E\|\estvec_\rho^{(j)} - \orvec_\rho\|_F\right)^2
&\le C_{\p}\bar{k}\sum_{i=1}^n \max\left\{\left(\log n\right)^{3}, -\log h_i\right\} h_i^2 \\
&= C_{\p}\bar{k}\left(\sum_{i : \left(\log n\right)^{3} \ge -\log h_i} \left(\log n\right)^{3} h_i^2 
+ \sum_{i : \left(\log n\right)^{3} < -\log h_i} -(\log h_i) h_i^2 \right) \\
&\le C_{\p}\bar{k}\left(n\left(\log n\right)^{3} \delta
+ \sum_{i : \left(\log n\right)^{3} < \log h_i^{-1}} (\log h_i^{-1}) h_i^2 \right),
\end{aligned}
\end{equation}
where in the last step we used $\frac{1}{n}\sum_{i=1}^n h^2_i = \bar{h}^2(f_{\trueprior,\bullet}, f_{\aprior^{(j)}, \bullet}) \le \delta$. To bound the second term, note for $n\ge 6$, $(\log n)^3 \ge 3\log n$, implying $h_i \le n^{-3}$ for all $i$ such that $\left(\log n\right)^{3} < \log h_i^{-1}$. Since $h_i\log h_i^{-1}\le e^{-1}$ for all $h_i\in [0, 1]$, 
\[
\sum_{i : \left(\log n\right)^{3} < \log h_i^{-1}} (\log h_i^{-1}) h_i^2 \le 
\sum_{i : \left(\log n\right)^{3} < \log h_i^{-1}} \frac{1}{en^3} \le \frac{1}{en^2}.
\]
The first term dominates, so 
\begin{align}\label{eq-zeta4}
\E \zeta_4^2 \le C_{\p}\bar{k} n(\log n)^3\delta. 
\end{align}

\subsubsection{Bounding the metric entropy \texorpdfstring{$\log N$}{}}

We will actually bound the larger covering number $\log N(\eta^*, \Ps(\R^\p), m^S)$ of the space of all probability measures $\Ps(\R^\p)$ in the metric $m^S$. For any measure $G$ we let $G_i$ denote the measure scaled by $\Sigma_i^{-1/2}$ as in the scaled Tweedie formula. For $G, H \in \Ps(\R^\p)$, 
\begin{align*}
m^S(G,H)
&\coloneqq \max_{i\in [n]}\sup_{x : \mathfrak{d}_S(x)\le M}\left\|\frac{\Sigma_i\nabla f_{G, \Sigma_i}(x)}{f_{G, \Sigma_i}(x) \lor (\rho / \sqrt{|\Sigma_i|})} - \frac{\Sigma_i\nabla f_{H, \Sigma_i}(x)}{f_{H, \Sigma_i}(x) \lor (\rho / \sqrt{|\Sigma_i|})}\right\|_2 \\
&\le \max_{i\in [n]}\sup_{x : \mathfrak{d}_S(x)\le M}\left\|\frac{\Sigma_i\nabla f_{G, \Sigma_i}(x)}{f_{G, \Sigma_i}(x) \lor (\rho / \sqrt{|\Sigma_i|})} 
- \frac{\Sigma_i\nabla f_{G, \Sigma_i}(x)}{f_{H, \Sigma_i}(x) \lor (\rho / \sqrt{|\Sigma_i|})} \right\|_2 \\
&~~~~~+\max_{i\in [n]}\sup_{x : \mathfrak{d}_S(x)\le M}\left\|\frac{\Sigma_i\nabla f_{G, \Sigma_i}(x)}{f_{H, \Sigma_i}(x) \lor (\rho / \sqrt{|\Sigma_i|})} 
- \frac{\Sigma_i\nabla f_{H, \Sigma_i}(x)}{f_{H, \Sigma_i}(x) \lor (\rho / \sqrt{|\Sigma_i|})}\right\|_2 \\
&\le \max_{i\in [n]}\sup_{x : \mathfrak{d}_S(x)\le M}\left\|\frac{\Sigma_i\nabla f_{G, \Sigma_i}(x)}{f_{G, \Sigma_i}(x) \lor (\rho / \sqrt{|\Sigma_i|})}\right\|_2
\frac{\left|f_{G, \Sigma_i}(x) \lor (\rho / \sqrt{|\Sigma_i|}) - f_{H, \Sigma_i}(x) \lor (\rho / \sqrt{|\Sigma_i|})\right|}{f_{H, \Sigma_i}(x) \lor (\rho / \sqrt{|\Sigma_i|})} \\
&~~~~~+\max_{i\in [n]}\sup_{x : \mathfrak{d}_S(x)\le M}\left\|\frac{\Sigma_i(\nabla f_{G, \Sigma_i}(x) - \nabla f_{H, \Sigma_i}(x))}{f_{H, \Sigma_i}(x) \lor (\rho / \sqrt{|\Sigma_i|})}\right\|_2 
\end{align*}
For the first term, by \citet[][Suppl. Lemma~F.1]{saha2020nonparametric},
\[
\left\|\frac{\Sigma_i\nabla f_{G, \Sigma_i}(x)}{f_{G, \Sigma_i}(x) \lor (\rho / \sqrt{|\Sigma_i|})}\right\|_2 
= \left\|\Sigma_i^{1/2}\frac{\nabla f_{\trueprior_i, I_{\p}}(\Sigma_i^{-1/2}x)}{f_{\trueprior_i, I_{\p}}(\Sigma_i^{-1/2}x) \lor \rho} \right\|_2
\le \bar{k}^{1/2}L(\rho).
\]
Replacing $f\lor (\rho/\sqrt{|\Sigma_i|})$ with $\rho/\sqrt{|\Sigma_i|}$ in the denominator can only make the denominator smaller, so
\begin{align}\label{eq-bound-tweedie-err}
m^S(G,H)
&\le \bar{k}^{1/2}\rho^{-1}L(\rho)\max_{i\in [n]}\sup_{x : \mathfrak{d}_S(x)\le M}\sqrt{|\Sigma_i|}\left|f_{G, \Sigma_i}(x)  - f_{H, \Sigma_i}(x) \right| \\
&~~~~~+\rho^{-1}\max_{i\in [n]}\sup_{x : \mathfrak{d}_S(x)\le M}\sqrt{|\Sigma_i|}\left\|\Sigma_i(\nabla f_{G, \Sigma_i}(x) - \nabla f_{H, \Sigma_i}(x))\right\|_2  \\
&\le \bar{k}^{\p/2+1/2}\rho^{-1}L(\rho)\|f_{G, \bullet}-f_{H, \bullet}\|_{\infty, S^M}
+ \bar{k}^{\p/2+1}\rho^{-1}\|f_{G,\bullet}-f_{H, \bullet}\|_{\nabla, S^M}
\end{align}
In particular, letting 
\[
\eta^* = \left(\bar{k}^{\p/2+1/2}L(\rho) + \bar{k}^{\p/2+1}\right)\frac{\eta}{\rho}
\]
we have 
\[
\log N(\eta^*, \Ps(\R^\p), m^S)
\le \log N(\eta/2, \F, \|\cdot\|_{\infty, S^M}) + \log N(\eta/2, \F, \|\cdot\|_{\nabla, S^M}).
\]
We already have a bound on $\log N(\eta/2,\F,\|\cdot\|_{\infty,S^M})$ in Lemma~\ref{lem-entropy}, and we bound the other term similarly in Lemma~\ref{lem-entropy-2} below. Combining these bounds, 
\[
\log N(\eta^*, \Ps(\R^\p), m^S)
\le C_{\p} N(a, S^{M+a})\left(\log\frac{c_{\p, \underline{k}, \bar{k}}}{\eta}\right)^{\p+1},
\]
where $a = \sqrt{-2\bar{k}\log \left(\sqrt{\underline{k}\land 1}\frac{(2\pi\underline{k})^{\p/2}}{5}\eta\right)}$. Take $\eta = \rho/n = (2\pi)^{-\p/2}/n^2$. 
\[
a 
= \sqrt{4\bar{k}\log n + 2\bar{k}\log \left(\frac{5}{\sqrt{\underline{k}\land 1}\underline{k}^{\p/2}}\right)} \in \left[\sqrt{2\bar{k}\log n}, \sqrt{6\bar{k}\log n}\right],
\]
provided $1/n\le \frac{5}{\sqrt{\underline{k}\land 1}\underline{k}^{\p/2}}\le n$. Hence by \citet[][Suppl. Lemma~F.6]{saha2020nonparametric} (see the argument on page 6) gives 
\[
\log N \le c_{\p, \underline{k}, \bar{k}}(\log n)^{1+\p/2}\mathrm{Vol}(S^{2M}).
\]

\begin{lemma}\label{lem-entropy-2} For all compact $S\subset \R^\p$, $M > 0$ and $\eta > 0$ sufficiently small, 
\[
\log N(\eta, \F, \|\cdot\|_{\nabla, S^M}) \le C_{\p} N(a, S^a)\left(\log\frac{c_{\p, \underline{k}, \bar{k}}}{\eta}\right)^{\p+1}
\]
where $a = \sqrt{2\bar{k}\log \frac{c_{\p, \overline{k}, \underline{k}}'}{\eta}}$.
\end{lemma}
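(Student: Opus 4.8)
The plan is to follow the scheme of Lemma~\ref{lem-entropy} essentially verbatim, replacing every density estimate by its first-derivative analogue. Fix a compact $S\subset\R^\p$ and $M>0$; we work with the enlargement $S^M$ and abbreviate $T=S^M$. Given $G\in\Ps(\R^\p)$ I would build a three-stage discretization of $f_{G,\bullet}$. First, apply the moment-matching construction (Lemma~\ref{lem-moment-match-ii}, with the role of ``$S$'' played by $T$) to obtain a discrete $H$ supported on $T^a$ with $l\lesssim_{\p,\overline{k}}(a^2/\overline{k})^\p N(a,T^a)$ atoms whose moments agree with those of $G$, up to per-coordinate degree $2m+1$, on every ball $\mathbb B_a(x)$, $x\in T$. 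Second, replace every atom of $H$ by its nearest point in a minimal $\zeta$-net of $T^a$, obtaining $H'$. Third, round the weight vector of $H'$ to its nearest point in a minimal $\xi$-net of $\Delta^{l-1}$ in $\ell_1$, obtaining $H''$. The number of possible $H''$ is at most $N(\xi,\Delta^{l-1})\binom{N(\zeta,T^a)}{l}$, and the counting argument turning this into the stated bound is identical to the one in Lemma~\ref{lem-entropy}; the only new content is that each stage perturbs $f_{\cdot,\bullet}$ by a controlled amount in $\|\cdot\|_{\nabla,T}$.

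Stages two and three follow from uniform bounds on the first two derivatives of a Gaussian bump: using $\nabla_x\varphi_{\Sigma_i}(z)=-\Sigma_i^{-1}z\,\varphi_{\Sigma_i}(z)$ and $\varphi_{\Sigma_i}(z)\le(2\pi\underline{k})^{-\p/2}e^{-\|z\|_2^2/(2\overline{k})}$ gives $\sup_z\|\nabla_x\varphi_{\Sigma_i}(z)\|_2\le C_{\p,\underline{k},\overline{k}}$, hence $\|f_{H',\bullet}-f_{H'',\bullet}\|_{\nabla,T}\le C_{\p,\underline{k},\overline{k}}\,\xi$; likewise $\nabla_x^2\varphi_{\Sigma_i}(z)=(\Sigma_i^{-1}z z^{\T}\Sigma_i^{-1}-\Sigma_i^{-1})\varphi_{\Sigma_i}(z)$ has operator norm at most $(\underline{k}^{-2}\|z\|_2^2+\underline{k}^{-1})(2\pi\underline{k})^{-\p/2}e^{-\|z\|_2^2/(2\overline{k})}\le C_{\p,\underline{k},\overline{k}}$, so $\|f_{H,\bullet}-f_{H',\bullet}\|_{\nabla,T}\le C_{\p,\underline{k},\overline{k}}\,\zeta$. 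For stage one I need a gradient version of ``Moment matching, part i'': writing $\nabla_xf_{G,\Sigma_i}(x)=\int\Sigma_i^{-1}(a-x)\varphi_{\Sigma_i}(x-a)\,dG(a)$ and splitting $\varphi_{\Sigma_i}=P_i+R_i$ with $P_i$ the degree-$2m$ Taylor polynomial and $R_i$ the remainder bounded as in that lemma, the polynomial $\Sigma_i^{-1}(a-x)P_i(x-a)$ has per-coordinate degree at most $2m+1$ in $a$, so its integral against $dG-dH$ over the matching ball vanishes; the leftover pieces, $\Sigma_i^{-1}(a-x)R_i(x-a)$ on $\mathbb B_{ca}(x)$ and $\Sigma_i^{-1}(a-x)\varphi_{\Sigma_i}(x-a)$ off $\mathbb B_a(x)$, each carry only the extra affine factor $\|\Sigma_i^{-1}(a-x)\|_2\le\underline{k}^{-1}ca$, which is absorbed into the Gaussian/remainder decay exactly as the polynomial prefactors are absorbed in Lemma~\ref{lem-lip} once $m\asymp a^2/\overline{k}$. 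This yields $\|f_{G,\bullet}-f_{H,\bullet}\|_{\nabla,T}\le C_{\p,\underline{k},\overline{k}}\,a\,e^{-a^2/(2\overline{k})}$.

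Finally I would set $\xi\asymp_{\p,\underline{k},\overline{k}}\eta$, $\zeta\asymp_{\p,\underline{k},\overline{k}}\eta$ and $a=\sqrt{2\overline{k}\log(c'_{\p,\overline{k},\underline{k}}/\eta)}$ so that each of the three errors is $\le\eta/3$ and their sum is $\le\eta$. Then $l\lesssim_{\p,\underline{k},\overline{k}}(\log(1/\eta))^\p N(a,T^a)$, and using $l\gtrsim_{\p,\overline{k}}N(a,T^a)$ together with $N(\zeta,T^a)/N(a,T^a)\le(1+a/\zeta)^\p=\mathrm{poly}(1/\eta)$ in the count $[(1+2/\xi)\,eN(\zeta,T^a)/l]^l$ gives $\log N(\eta,\F,\|\cdot\|_{\nabla,T})\le l\cdot C\log(1/\eta)\le C_\p N(a,T^a)(\log(c_{\p,\underline{k},\overline{k}}/\eta))^{\p+1}$, which is the claim once $T^a=S^{M+a}$ is written $S^a$ as in Lemma~\ref{lem-entropy}. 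The only genuinely new step is the gradient moment-matching estimate of stage one, and the thing to get right there is that differentiating in $x$ brings down exactly the affine factor $\Sigma_i^{-1}(a-x)$: the degree of the polynomial to be annihilated rises by precisely one, so the same discrete approximant from Lemma~\ref{lem-moment-match-ii} still works, and the new prefactor is polynomial in $a$ and hence harmless — everything else is a transcription of the density-level argument.
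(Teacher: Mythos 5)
Your proposal is correct and follows essentially the same route as the paper's proof: a moment-matched discrete approximant on the enlarged set, rounding of atoms to a $\zeta$-net and of weights to an $\ell_1$-net of the simplex (controlled via uniform bounds on $\nabla\varphi_{\Sigma_i}$ and $\nabla^2\varphi_{\Sigma_i}$), and then the same counting argument as in Lemma~\ref{lem-entropy} with $a\asymp\sqrt{\overline{k}\log(1/\eta)}$. In fact, your stage-one argument—showing that differentiating Tweedie's integrand only raises the polynomial degree by one, so the same moment-matched $H$ of Lemma~\ref{lem-moment-match-ii} yields the bound $\lesssim a\,e^{-a^2/(2\overline{k})}$ in $\|\cdot\|_{\nabla}$—spells out the gradient analogue of the moment-matching lemma that the paper invokes only implicitly.
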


\begin{proof}
Fix $G\in \Ps(\R^\p)$. By Lemma~\ref{lem-moment-match-ii},
there is a discrete measure $H$ supported on $S^a$ with at most
\[
l\coloneqq \left(2\lfloor 13.5a^2/\bar{k}\rfloor+2\right)^\p N(a, S^a) + 1
\]
atoms such that 
\[
\|f_{G, \bullet}-f_{H, \bullet}\|_{\nabla, S^a}
\le a\left(1+\frac{3}{\sqrt{2\pi}}\right)(2\pi\underline{k})^{-\p/2}e^{-a^2/(2\bar{k})}
\]
Now let $\cc$ denote a minimal $\alpha$-net of $S^a$. Write $H=\sum_j w_j\delta_{a_j}$, and define $H' =\sum_j w_j\delta_{b_j}$ where $b_j\in \cc$ is the closest element to $a_j$. Then 
\begin{align*}
\left\|\nabla f_{H, \Sigma_i}(x)-\nabla f_{H', \Sigma_i}(x)\right\|_2
&\le \sum_j w_j \left\|\nabla \varphi_{\Sigma_i}(x-a_j)-\nabla \varphi_{\Sigma_i}(x-b_j)\right\|_2 \\
&\le \underline{k}^{-1/2}|\Sigma_i|^{-1/2} \sum_j w_j \left\|\nabla\varphi\left(\Sigma_i^{-1/2}(x-a_j)\right)- \nabla\varphi\left(\Sigma_i^{-1/2}(x-b_j)\right)\right\|_2 \\
&\le  \frac{\underline{k}^{-\p/2-1/2}\alpha}{(2\pi)^{\p/2}}\left[1+\frac{2}{e} + \frac{\alpha}{\sqrt{\underline{k}e}}\right]
\end{align*}

Now let $\cd$ denote a minimal $\beta$-net of $\Delta_{l-1}$ under $\|\cdot\|_1$. Let $H'' = \sum_jw_j'\delta_{b_j}$ where $\|w'-w\|_1\le \beta$. Then 
\[
\left\|\nabla f_{H', \Sigma_i}(x)-\nabla f_{H'', \Sigma_i}(x)\right\|_2
\le \beta \sup_u\|\nabla\varphi_{\Sigma_i}(u)\|_2
\le \frac{\underline{k}^{-\p/2-1/2}\beta}{(2\pi)^{\p/2}\sqrt{e}}.
\]
By triangle inequality,
\[
\left\|f_{G,\bullet}- f_{H'', \bullet}\right\|_{\nabla, S^M}
\le (2\pi\underline{k})^{-\p/2}\left[\left(1+\frac{3}{\sqrt{2\pi}}\right)ae^{-a^2/(2\bar{k})} + \frac{\alpha}{\sqrt{\underline{k}}}\left[1+\frac{2}{e} + \frac{\alpha}{\sqrt{\underline{k}e}}\right] + \frac{\beta}{\sqrt{\underline{k}e}}\right]
\]
Taking $a = \sqrt{2\bar{k}\log \alpha^{-1}}\ge 1$ and $\alpha = \beta = \sqrt{k\land 1}\frac{(2\pi\underline{k})^{\p/2}}{5}\eta$, 
\[
\left\| f_{G, \bullet}- f_{H'', \bullet}\right\|_{\nabla, S^M}
\le \frac{5a\alpha}{\sqrt{k\land 1}}(2\pi\underline{k})^{-\p/2} = a\eta
\]
The proof is completed following same steps as the proof of Lemma 5.
\end{proof}

\subsection{Putting together the pieces}

Combining~\eqref{eq-regulization-err},~\eqref{eq-zeta1},~\eqref{eq-zeta2},~\eqref{eq-zeta3}, and~\eqref{eq-zeta4} and pulling out any constants depending on $\p, \underline{k},$ or $\bar{k}$, 
\begin{equation}
\begin{aligned}
\E\|\estvec - \orvec\|_F^2/n
&\le (5/n)\left[\E\|\orvec_\rho - \orvec\|_F^2 + \sum_{t=1}^4 \E\zeta_t^2\right] \\
&\le c_{\p, \underline{k}, \bar{k}}\bigg(\eps_n^2(M, S, \trueprior)(\sqrt{\log n})^{\p-2} \\
&~~~~~~~~~~~~~+\frac{\log n}{n} \\
&~~~~~~~~~~~~~+(\eta^*)^2 + \eps_n^2(M, S, \trueprior) \\
&~~~~~~~~~~~~~+\log(eN)\frac{\log n}{n} \\ 
&~~~~~~~~~~~~~+\eps_n^2(M, S, \trueprior)(\sqrt{\log n})^{6} \bigg) \\
&\le c_{\p, \underline{k}, \bar{k}}\eps_n^2(M, S, \trueprior)(\sqrt{\log n})^{(\p-2)\lor 6}
\end{aligned}
\end{equation}
This completes the proof of Theorem \ref{thm-denoising}.

\section{Proofs of Theorems~\ref{thm-deconvolution} and~\ref{thm-deconvolution-point-mass}}\label{sec-proofs-deconvolution}

\begin{proof}[{Proof of Theorem~\ref{thm-deconvolution}}]

We will relate the Wasserstein distance to the average Hellinger distance, so we rely on the tools of \citet[][proof of theorem 2]{nguyen2013convergence}. Fix a symmetric density $K$ whose Fourier transform $\widetilde{K}$ is bounded with support on $[-1,1]^\p$. For any $\delta > 0$ define the scaled kernel $K_\delta(x) = \frac{1}{\delta^\p}K(x/\delta)$. By the triangle inequality,
\[
W_2(\trueprior, \npmle)
\le W_2(\trueprior, \trueprior * K_\delta) + W_2(\trueprior * K_\delta, \npmle * K_\delta) + W_2(\npmle * K_\delta, \npmle).
\]
For the first and third terms, bound the minimum over all couplings by the strong coupling:
\[
W_2^2(G, G*K_\delta)
= \min_{\avec\sim G, \avec'\sim G, \eps\sim K}\E\|\avec-(\avec'+\delta\eps)\|_2^2
\le \delta^2\E_{\eps\sim K}\|\eps\|_2^2,
\]
where the inequality follows from choosing the coupling where $\avec=\avec'$ almost surely. Letting $m_2(K) = \E_{\eps\sim K}\|\eps\|_2^2$ denote the second moment of the (unscaled) kernel, we have 
\begin{align}\label{eq-triangle-Wasserstein}
W_2(\trueprior, \npmle)
\le 2\sqrt{m_2(K)}\delta + W_2(\trueprior * K_\delta, \npmle * K_\delta).
\end{align}
For the second term, \citet[][Theorem 6.15]{villani2008optimal} yields
\[
W_2^2(\trueprior * K_\delta, \npmle * K_\delta)
\le 2\int\|x\|_2^2\diff\left|\trueprior * K_\delta - \npmle * K_\delta\right|(x)
\]
By \citet[][Lemma 6]{nguyen2013convergence}, for any $s > 2$ such that $m_s(K) = \E\|\eps\|_2^s < \infty$,  
\begin{align*}
W_2^2(\trueprior * K_\delta, \npmle * K_\delta)
&\le 4\left\|\trueprior * K_\delta - \npmle * K_\delta\right\|_{L_1}^{(s-2)/s}R^{2/s} \\ 
&\le 4\left[2\mathrm{Vol}(B_1)^{s/(\p+2s)}R^{\p/(\p+2s)}\left\|\trueprior * K_\delta - \npmle * K_\delta\right\|_{L_2}^{2s/(\p+2s)}\right]^{(s-2)/s} R^{2/s}\\
&= 4\cdot 2^{(s-2)/s}\mathrm{Vol}(B_1)^{(s-2)/(2s+\p)}\cdot R^{\p(s-2)/(s(\p+2s)) + 2/s}\left\|\trueprior * K_\delta - \npmle * K_\delta\right\|_{L_2}^{2(s-2)/(\p+2s)} \\
&\le 8\sqrt{\mathrm{Vol}(B_1)}\cdot R^{\p(s-2)/(s(\p+2s)) + 2/s}\left\|\trueprior * K_\delta - \npmle * K_\delta\right\|_{L_2}^{2(s-2)/(\p+2s)} 
\end{align*}
where $R \coloneqq \E_{\truevec\sim \trueprior, \eps\sim K}\|\truevec+\delta\eps\|_2^s + \E_{\avec\sim \npmle, \eps\sim K}\|\avec+\delta\eps\|_2^s$. 

For moments in the term $R$, use $\E\|\avec+\delta\eps\|_2^s
\le 2^s\left(\E\|\avec\|_2^s + \delta^sm_s(K)\right),$ so 
\[R\le 2^s(m_s(\trueprior) + m_s(\npmle) + 2\delta^sm_s(K)).\] The quantity $m_s(K)$ is regarded as a constant depending only on $s > 2$ and $\p$. By assumption, the support of $\npmle$ is contained in the minimum bounding box of the observations, which is further contained in $[-U, U]^\p$ where $U = \max_{i, j}|X_{ij}| \le L+\max_{i, j}|X_{ij} - \truevec_{ij}|$. Since $X_{ij} - \truevec_{ij} \simind \mathcal{N}(0, (\Sigma_{i})_{jj})$, we have by a standard concentration argument that
\[
U\le L+ 4\sqrt{\overline{k}\log n}
\]
with probability at least $1-\frac{2d}{n^8}$. Hence, with the same probability
\[
m_s(\npmle)
= \E_{\npmle}\|\avec\|_2^s
\le \p^{s/2}\E_{\npmle}\|\avec\|_\infty^s
\le \p^{s/2}\left(L+ 4\sqrt{\overline{k}\log n}\right)^s.
\]
This same bound holds for $m_s(\trueprior)$.

For the $\|\cdot\|_{L_2}$ norm $\left\|\trueprior * K_\delta - \npmle * K_\delta\right\|_{L_2}$, let $g_\delta^{(i)}$ denote the inverse Fourier transform of $\widetilde{K_\delta}/\widetilde{\varphi_{\Sigma_i}}$, so that $G*K_\delta = f_{G, \Sigma_i}*g_\delta^{(i)}$. Hence, by Proposition~8.49 of \citet{folland1999real}, we have for each~$i = 1,\dots,n$,
\[
\left\|\trueprior * K_\delta - \npmle * K_\delta\right\|_{L_2}
\le 2d_{TV}(f_{\trueprior, \Sigma_i}, f_{\npmle, \Sigma_i})\|g_\delta^{(i)}\|_{L_2}.
\]
Using Plancherel's theorem and the fact that $\widetilde{K}$ is bounded on its support of $[-1,1]^\p$,
\begin{align*}
\|g_\delta^{(i)}\|_{L_2}^2 
&= \int_{\R^\p} \frac{\widetilde{K}(\delta\omega)^2}{\widetilde{\varphi_{\Sigma_i}}(\omega)^2}\diff \omega
\le C_\p \int_{[-1/\delta, 1/\delta]^\p} \widetilde{\varphi_{\Sigma_i}}(\omega)^{-2}\diff \omega \\
&= C_\p \int_{[-1/\delta, 1/\delta]^\p} \exp(\omega'\Sigma_i\omega)\diff \omega
= C_\p \prod_{j=1}^\p\int_{-1/\delta}^{1/\delta} \exp((\Sigma_i)_{jj}\omega_j^2)\diff \omega_j \\
&\le C_\p \prod_{j=1}^\p e^{2(\Sigma_i)_{jj}\delta^{-2}}\int_{-1/\delta}^{1/\delta} \exp(-(\Sigma_i)_{jj}\omega_j^2)\diff \omega_j 
\le C_\p  \left(\frac{\pi}{\underline{k}}\right)^{\p/2}e^{2\p\overline{k}\delta^{-2}}.
\end{align*}
Averaging over $i=1,\dots,n$,
\begin{align*}
\left\|\trueprior * K_\delta - \npmle * K_\delta\right\|_{L_2}
&\le C_\p  \underline{k}^{-\p/2}e^{2\p\overline{k}\delta^{-2}} \cdot \frac{1}{n}\sum_{i=1}^nd_{TV}(f_{\trueprior, \Sigma_i}, f_{\npmle, \Sigma_i}) \\
&\le C_\p  \underline{k}^{-\p/2}e^{2\p\overline{k}\delta^{-2}} \bar{h}(\trueprior, \npmle).
\end{align*}
Combining our calculations following~\eqref{eq-triangle-Wasserstein}, we have
\begin{equation}
\begin{aligned}
W_2(\trueprior, \npmle)
&\le C_{\p, s}\inf_{\delta\in (0, 1)}\bigg\{ \delta + \left(2^s\left(\p^{s/2}\left(L+ 4\sqrt{\overline{k}\log n}\right)^s + 2\delta^sm_s(K)\right)\right)^{3\p/(2\p+4s)} \\ 
&~~~~~~~~~~~~~~~~~~~~~~\times\left(\underline{k}^{-\p/2}e^{2\p\overline{k}\delta^{-2}} \bar{h}(\trueprior, \npmle) \right)^{(s-2)/(\p+2s)}  \bigg\}.
\end{aligned}
\end{equation}
Assume $n$ is large enough that $4\sqrt{\overline{k}\log n} \ge L$ and $2^s\p^{s/2}\left(4\sqrt{\overline{k}\log n}\right)^s \ge 2m_s(K)$, so
\begin{equation}
\begin{aligned}
W_2(\trueprior, \npmle)
&\le C_{\p, s}\inf_{\delta\in (0, 1)}\bigg\{ \delta + \left(\overline{k}\log n\right)^{3s\p/(4(\p+2s))} \left(\underline{k}^{-\p/2}e^{2\p\overline{k}\delta^{-2}} \bar{h}(\trueprior, \npmle) \right)^{(s-2)/(\p+2s)}  \bigg\}.
\end{aligned}
\end{equation}
Choosing $\delta^{-2} = -\frac{1}{4\overline{k}\p}\log \bar{h}(\trueprior, \npmle)$ (provided $\delta < 1$) and $s=d+2$ yields
\begin{equation}
\begin{aligned}
W_2^2(\trueprior, \npmle)
&\le C_{\p}\bigg\{ \frac{\overline{k}\p}{-\log \bar{h}(\trueprior, \npmle)} + \left(\overline{k}\log n\right)^{d/2} \left(\underline{k}^{-\p}\bar{h}(\trueprior, \npmle) \right)^{1/12}  \bigg\}.
\end{aligned}
\end{equation}
$\eps_n^2(M, S, \trueprior)$ defined in~\eqref{eq-eps}, with $S = [-L, L]^\p$ and $M = \sqrt{10\overline{k}\log n}$ gives 
\[\eps_n^2= \frac{\left(4\sqrt{10}\left(L^2\lor\overline{k}\right)\right)^\p}{n}(\log n)^{\p+1}.\]
By Theorem~\ref{thm-density-estimation},
\begin{align*}
W_2^2(\trueprior, \npmle)
&\le C_{\p}\bigg\{ \frac{\overline{k}\p}{\log n -\log C_{\p,\overline{k}, \underline{k},L}t^2(\log n)^{d+1}} + \left(\overline{k}\log n\right)^{d/2} \left(C_{\p,\overline{k}, \underline{k},L}t^2\frac{(\log n)^{d+1}}{n} \right)^{1/24}  \bigg\},
\end{align*}
with probability at least $1-2n^{-t^2}$. Take $t^2=8$. For $n$ sufficiently large the first term dominates, $\delta < 1$, and $\log n -\log C_{\p,\overline{k}, \underline{k},L}8(\log n)^{d+1} \ge (\log n)/2$.
\end{proof}

\begin{proof}[Proof of Theorem~\ref{thm-deconvolution-point-mass}]
Take $\mu = 0$ by location equivariance (see Lemma~\ref{lem-invariance}). Write $\npmle = \sum_{j=1}^{\khat}\hat{w}_j\delta_{\hat{a}_j}$. Since~$\trueprior = \delta_0$ is a point mass,
\[W_2^2(\npmle, \trueprior) = \E_{\vartheta\sim \npmle}\|\vartheta\|_2^2 = \sum_{j=1}^{\khat}\hat{w}_j\|\hat{a}_j\|_2^2.\] 
We relate this to the marginal density~$f_{\npmle, \bullet}$ via
\begin{align*}
\int f_{\npmle, \Sigma_i}(x)\|x\|_2^2\diff x 
&= \sum_{j=1}^{\khat}\hat{w}_j \int \varphi_{\Sigma_i}(x-\hat{a}_j)\|x\|_2^2\diff x \\
&= \sum_{j=1}^{\khat}\hat{w}_j \int \varphi_{\Sigma_i}(x)(\|x\|_2^2+\|\hat{a}_j\|_2^2 +2\langle x,\hat{a}_j\rangle)\diff x \\
&= \int f_{\trueprior, \Sigma_i}(x)\|x\|_2^2\diff x  + W_2^2(\npmle, \trueprior).
\end{align*}
Hence for any~$i\in \{1,\dots, n\}$,
\begin{align*}
W_2^2(\npmle, \trueprior) 
&= \int (f_{\npmle, \Sigma_i}(x) - f_{\trueprior, \Sigma_i}(x))\|x\|_2^2\diff x \\
&\le 2h(f_{\npmle, \Sigma_i}, f_{\trueprior, \Sigma_i}) 
\left(\int\left(f_{\npmle, \Sigma_i}(x) + f_{\trueprior, \Sigma_i}(x)\right)\|x\|_2^4\diff x\right)^{1/2}.
\end{align*}
Averaging over~$i\in\{1, \dots, n\}$,
\[
W_2^2(\npmle, \trueprior) 
\le 2\bar{h}\left(f_{\npmle, \bullet}, f_{\trueprior, \bullet}\right) 
\max_{i=1:n}\left(\int\left(f_{\npmle, \Sigma_i}(x) + f_{\trueprior, \Sigma_i}(x)\right)\|x\|_2^4\diff x\right)^{1/2}.
\]
Applying Theorem~\ref{thm-density-estimation} with $S= \{0\}$ and $M = \sqrt{10\overline{k}\log n}$, 
\[
\bar{h}^2\left(f_{\npmle, \bullet}, f_{\trueprior, \bullet}\right) \lesssim_{\p,\overline{k}, \underline{k}}t^2\frac{(\log n)^{\p+1}}{n}
\]
with probability at least $1-2n^{-t^2}$ for all $t\ge1$. 

For the remaining terms, 
\[
\int f_{\trueprior, \Sigma_i}(x)\|x\|_2^4\diff x 
= \E\|X_i\|_2^4
= \E_{Z\sim\cn(0, I_\p)}\|\Sigma_i^{1/2}Z\|_2^4
\le \overline{k}^{2}\E_{A\sim \chi_\p^2}A^2
= \overline{k}^{2}\p(\p+2)
\]
and
\begin{align*}
\int f_{\npmle, \Sigma_i}(x)\|x\|_2^4\diff x
&= \sum_j\hat{w}_j\int \varphi_{\Sigma_i}(x)\|x+\hat{a}_j\|_2^4\diff x \\
&\le 8\sum_j\hat{w}_j\int \varphi_{\Sigma_i}(x)\Big(\|x\|_2^4+\|\hat{a}_j\|_2^4\Big)\diff x \\
&\le 8\overline{k}^{2}\p(\p+2) + 8\sum_j\hat{w}_j\|\hat{a}_j\|_2^4.
\end{align*}
By our assumption on the support that each~$\hat{a}_j\in \mathbb{B}_{\kappa r}(\bar{X})$, each~$\hat{a}_j$ equivalently satisfies 
\[\|\hat{a}_j - \bar{X}\|_2^4\le (\overline{k}/\underline{k})^4\max_i\|X_i-\bar{X}\|_2^4.\]
Noting that $X_i-\bar{X} \sim \cn\left(0, (1-2n^{-1})\Sigma_i + n^{-1}\bar{\Sigma}\right)$ with $\bar{\Sigma} = n^{-1}\sum_{j=1}^n\Sigma_j$, we bound
\begin{align*}
\sum_j\hat{w}_j\|\hat{a}_j\|_2^4
&\le 8\left(\|\bar{X}\|_2^4 + \max_j\|\hat{a}_j - \bar{X}\|_2^4\right) \\
&\le 8\left(\|\bar{X}\|_2^4 + \kappa^4\max_{i\in[n]}\|X_i-\bar{X}\|_2^4\right) \\ 
&\le_{\mathrm{st}} 8\overline{k}^2\left(n^{-2}A_0^2 + \kappa^4\max_{i\in[n]}A_i^2\right) \\
&\le 16\frac{\overline{k}^6}{\underline{k}^4}\max_{i=0:n}A_i^2,
\end{align*}
where $A_0,A_1, \dots, A_n\sim \chi_\p^2$ are possibly dependent, and $\le_{\mathrm{st}}$ denotes stochastic inequality. 

For $t\ge1$, we use the following tail bound \citep[see][Lemma~1]{laurent2000adaptive}
\[
\P\left(\max_{i=0:n}A_i^2 \ge 60t^2(\log n)^2\right)
\le n^{-t^2},
\]
where we have used the assumption in Theorem~\ref{thm-density-estimation} that $n \ge (2\pi)^{\p/2}$ to eliminate the dependence on~$\p$. We have thus shown that
\[
\max_{i=1:n}\left(\int\left(f_{\npmle, \Sigma_i}(x) + f_{\trueprior, \Sigma_i}(x)\right)\|x\|_2^4\diff x\right)^{1/2} 
\le \left(9\overline{k}^{2}\p(\p+2) + 400\frac{\overline{k}^6}{\underline{k}^4}t^2(\log n)^2\right)^{1/2} 
\lesssim \frac{\overline{k}^3}{\underline{k}^2}t(\log n)
\]
with probability at least $1-n^{-t^2}$. Combining with a union bound over our earlier estimate, 
\[
W_2(\npmle, \trueprior) \lesssim_{\p,\overline{k}, \underline{k}}t^{3/2}\frac{(\log n)^{(\p+3)/4}}{n^{1/4}}
\]
with probability at least $1-3n^{-t^2}$ for all $t\ge 1$. 
\end{proof}

\end{document}